\begin{document}



\setlength{\parindent}{5mm}
\renewcommand{\leq}{\leqslant}
\renewcommand{\geq}{\geqslant}
\newcommand{\N}{\mathbb{N}}
\newcommand{\sph}{\mathbb{S}}
\newcommand{\Z}{\mathbb{Z}}
\newcommand{\R}{\mathbb{R}}
\newcommand{\C}{\mathbb{C}}
\newcommand{\F}{\mathbb{F}}
\newcommand{\g}{\mathfrak{g}}
\newcommand{\h}{\mathfrak{h}}
\newcommand{\K}{\mathbb{K}}
\newcommand{\RN}{\mathbb{R}^{2n}}
\newcommand{\ci}{c^{\infty}}
\newcommand{\derive}[2]{\frac{\partial{#1}}{\partial{#2}}}
\renewcommand{\S}{\mathbb{S}}
\renewcommand{\H}{\mathbb{H}}
\newcommand{\eps}{\varepsilon}
\newcommand{\chzrel}{c_{\mathrm{LR}}}
\newcommand{\Href}{H_{\mathrm{ref}}}
\newcommand{\Jref}{J_{\mathrm{ref}}}
\newcommand{\p}{\textbf{p}}
\newcommand{\diam}{\mathrm{diam}}

\theoremstyle{plain}
\newtheorem{theo}{Theorem}
\newtheorem{prop}[theo]{Proposition}
\newtheorem{lemma}[theo]{Lemma}
\newtheorem{definition}[theo]{Definition}
\newtheorem*{notation*}{Notation}
\newtheorem*{notations*}{Notations}
\newtheorem{corol}[theo]{Corollary}
\newtheorem{conj}[theo]{Conjecture}
\newtheorem*{conj*}{Conjecture}
\newtheorem*{claim*}{Claim}
\newtheorem{claim}[theo]{Claim}

\newenvironment{demo}[1][]{\addvspace{8mm} \emph{Proof #1.
    ~~}}{~~~$\Box$\bigskip}

\newlength{\espaceavantspecialthm}
\newlength{\espaceapresspecialthm}
\setlength{\espaceavantspecialthm}{\topsep} \setlength{\espaceapresspecialthm}{\topsep}

\newtheorem{exple}[theo]{Example}
\renewcommand{\theexple}{}
\newenvironment{example}{\begin{exple}\rm }{\hfill $\blacktriangleleft$\end{exple}}

\newtheorem{quest}[theo]{Question}
\renewcommand{\thequest}{}
\newenvironment{question}{\begin{quest}\it }{\end{quest}}

\newenvironment{remark}[1][]{\refstepcounter{theo} 
\vskip \espaceavantspecialthm \noindent \textsc{Remark~\thetheo
#1.} }%
{\vskip \espaceapresspecialthm}

\def\bb#1{\mathbb{#1}} \def\m#1{\mathcal{#1}}

\def\del{\partial}
\def\co{\colon\thinspace}
\def\Homeo{\mathrm{Homeo}}
\def\Hameo{\mathrm{Hameo}}
\def\Diffeo{\mathrm{Diffeo}}
\def\Symp{\mathrm{Symp}}
\def\Sympeo{\mathrm{Sympeo}}
\def\id{\mathrm{Id}}
\newcommand{\norm}[1]{||#1||}
\def\Ham{\mathrm{Ham}}
\def\lagham#1{\mathcal{L}^\mathrm{Ham}({#1})}
\def\Hamtilde{\widetilde{\mathrm{Ham}}}
\def\cOlag#1{\mathrm{Sympeo}({#1})}
\def\Crit{\mathrm{Crit}}
\def\Spec{\mathrm{Spec}}
\def\osc{\mathrm{osc}}
\def\Cal{\mathrm{Cal}}
\def\propT{(\mathcal{T})}

\definecolor{sobhan}{rgb}{0,.6,0}\newcommand{\sobhan}{\color{sobhan}}
\definecolor{lev}{rgb}{0.773,0.294,0.549}\newcommand{\lev}{\color{lev}} 
\definecolor{vincent}{rgb}{0,0,1}\newcommand{\vincent}{\color{vincent}}

\def\fnsymbol{$\alpha$}
\renewcommand{\thefootnote}{$\alpha$}

\title{A $C^0$ counterexample to the Arnold conjecture}
\author{Lev Buhovsky$^\alpha$, Vincent Humili{\`e}re, Sobhan Seyfaddini}
\footnotetext[1]{This author also uses the spelling ``Buhovski''
for his family name.}
\date{\today}

\renewcommand{\thefootnote}{\arabic{footnote}}

\maketitle
\begin{abstract} 
The Arnold conjecture states that a Hamiltonian diffeomorphism of a closed and connected symplectic manifold $(M, \omega)$ must have at least as many fixed points as the minimal number of critical points of a smooth function on $M$.
 
  It is well known that the Arnold conjecture holds for Hamiltonian \emph{homeomorphisms} of closed  symplectic surfaces.  The goal of this paper is to provide a counterexample to the Arnold conjecture for Hamiltonian homeomorphisms in dimensions four and higher.
  
  More precisely, we prove that every closed and connected symplectic manifold of dimension at least four admits a Hamiltonian homeomorphism with a single fixed point.
\end{abstract}

\tableofcontents


\section{Introduction and main results}
\subsection{The Arnold conjecture}

Let $(M, \omega)$ denote a closed and connected symplectic manifold.  This paper is concerned with the celebrated  conjecture of Arnold  on fixed points of Hamiltonian diffeomorphisms.  

\begin{conj*}[Arnold]
A Hamiltonian diffeomorphism of $M$ must have at least as many fixed points as the minimal number of critical points of a smooth function on $M$.
\end{conj*}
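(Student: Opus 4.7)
The Arnold conjecture in its strongest form is a celebrated open problem, so what I would realistically propose is the plan that establishes the \emph{homological} Arnold inequality $\#\mathrm{Fix}(\varphi_H)\geq \sum_i b_i(M)$, which is the best general lower bound currently available, together with the Lusternik--Schnirelmann-style refinement that aims at the full critical-point bound demanded by Arnold. The main tool throughout is Floer homology.

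The plan is as follows. First I would reduce to the case where $\varphi_H$ is nondegenerate and construct the Floer chain complex $CF^*(H,J)$ generated by contractible one-periodic orbits of $H$, with differential counting finite-energy solutions of the perturbed Cauchy--Riemann equation $\partial_s u + J(\partial_t u - X_H(u)) = 0$ on $\R\times S^1$. The standard analytic steps are energy estimates and Gromov compactness modulo sphere bubbling, transversality for generic $(H,J)$, gluing to establish $\partial^2 = 0$, continuation invariance in $(H,J)$, and finally a comparison with the Morse homology of a $C^2$-small autonomous Hamiltonian. This yields a canonical isomorphism $HF^*(H) \cong H^*(M;\Lambda)$ over an appropriate Novikov coefficient ring $\Lambda$, from which the homological lower bound follows in the nondegenerate case. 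The degenerate case is then handled by approximating $H$ by nondegenerate $H_\eps$ and using upper semicontinuity of the fixed-point count under small perturbations.

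To strengthen the bound toward the Arnold estimate in terms of the minimal number of critical points of a smooth function on $M$, I would invoke spectral invariants $c(\alpha;H)$ attached to classes $\alpha \in QH^*(M)$: nontrivial cup-length relations in quantum cohomology force spectral values associated to linked classes to be distinct, yielding additional geometrically distinct one-periodic orbits. In favourable cases this recovers a cup-length lower bound, and combining with Morse-theoretic facts about the minimal number of critical points gets one closer to Arnold's statement.

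The principal obstacle, and the reason the conjecture remains open in full generality, is twofold. Analytically, constructing Floer homology with integer coefficients on an arbitrary closed symplectic manifold requires a heavy abstract-perturbation framework (Kuranishi structures, polyfolds, virtual fundamental cycles) to handle multiply-covered holomorphic spheres, a step resolved only through the combined work of Fukaya--Ono, Liu--Tian, Pardon, Hofer--Wysocki--Zehnder and others. Conceptually, even granting Floer homology, upgrading the Betti-sum bound to a genuine Lusternik--Schnirelmann-type bound for the symplectic action functional requires an infinite-dimensional LS theory that is currently available only in restricted settings. This conceptual gap, rather than any specific missing technical step, is the decisive obstruction; it is precisely why the $C^0$ phenomenon that the present paper exploits is both surprising and, with hindsight, natural.
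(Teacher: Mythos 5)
You have correctly recognized that this \emph{statement} is not a theorem of the paper at all: it is the Arnold conjecture, stated as a \texttt{conj*} environment, and it remains open in the generality asserted. The paper offers no proof of it; on the contrary, the whole point of the paper is to show that the conjecture \emph{fails} for Hamiltonian homeomorphisms (the $C^0$ closure of $\Ham(M,\omega)$) in dimension at least four, while citing the known smooth results of Floer, Conley--Zehnder, Fortune--Weinstein, Rudyak--Oprea, etc., for the cases where the conjecture or its homological variant is established.

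Your write-up is therefore not a proof and does not claim to be one, which is the correct posture. As a summary of the known partial results and the obstructions to the full conjecture it is accurate: Floer homology gives the Betti-number bound on semipositive manifolds and, via virtual techniques (Fukaya--Ono, Liu--Tian, Pardon, Hofer--Wysocki--Zehnder), on general closed symplectic manifolds; the cup-length refinement via spectral invariants and Lusternik--Schnirelmann theory (Floer, Hofer, Rudyak--Oprea) is available in the aspherical case; and the passage from these homological bounds to the minimal-number-of-critical-points bound of the Morse-theoretic Arnold conjecture is the genuine open gap. One small point worth flagging: in the degenerate case the fixed-point count is \emph{not} upper semicontinuous under perturbation in any naive sense (a degenerate fixed point can split into several), so the reduction to the nondegenerate case in the Morse-theoretic (as opposed to Morse--Bott or homological) formulation is itself part of what is unresolved; the standard results handle the nondegenerate case directly. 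Since the paper never attempts to prove this conjecture, there is nothing in the paper to compare your plan against.
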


What makes this conjecture so remarkable is the large number of fixed points predicted by it.  This is often interpreted as a manifestation of symplectic rigidity.  In contrast to Arnold's conjecture, the classical Lefschetz fixed-point theorem  cannot predict the existence of more than one fixed point for a general diffeomorphism. Ever since its inception, this simple and beautiful conjecture has been a powerful driving force in the development of symplectic topology. The most important breakthrough towards a solution of this conjecture came with Floer's invention of what is now called \emph{Hamiltonian Floer homology} which established a variant of the Arnold conjecture on a large class of symplectic manifolds \cite{floer86, floer88, floer89}. The above version of the Arnold conjecture has been established on symplectically aspherical\footnote{$M$ is said to be symplectically aspherical if $\omega$ and $c_1$, the first Chern class of $M$, both vanish on $\pi_2(M)$.}  manifolds by Rudyak and Oprea in \cite{rudyak-oprea} who built on earlier works of Floer \cite{floer89b} and Hofer \cite{hofer}. 
We should mention that prior to the discovery of Floer homology,  the  Arnold conjecture was proven by Eliashberg \cite{eliashberg} on closed surfaces, by  Conley and Zehnder \cite{conley-zehnder} on higher dimensional tori, and by Fortune and Weinstein \cite{fortune, FW} on complex projective spaces.

\subsection{The Arnold conjecture and Hamiltonian homeomorphisms}
Throughout this paper we will denote by $\Symp(M, \omega)$ and $\Ham(M, \omega)$ the groups of symplectic and Hamiltonian  diffeomorphisms of $(M, \omega)$, respectively.  As is nowadays standard, we call \emph{symplectic homeomorphism} any homeomorphism which can be written as a uniform limit of symplectic diffeomorphisms; the set of all symplectic homeomorphisms is denoted by $\Sympeo(M, \omega)$; see Section \ref{sec:sympl-hamilt-home}.

As a first attempt at defining \emph{Hamiltonian homeomorphisms}, we will say that a homeomorphism $\phi$ of $M$ is a \emph{Hamiltonian homeomorphism} if it can be written as a uniform limit of Hamiltonian diffeomorphisms.  This class of homeomorphisms has been studied very extensively, from a dynamical point of view, in the case of closed surfaces\footnote{This is precisely the class of area preserving homeomorphisms with vanishing mean rotation vector.}. For example, Matsumoto \cite{matsumoto}, building on an earlier paper of Franks \cite{franks}, has proven that Hamiltonian homeomorphisms of surfaces satisfy the Arnold conjecture.  An important development in the study of Hamiltonian homeomorphisms of surfaces has been Le Calvez's theory of transverse foliations \cite{lecalvez05} which has not only proven the Arnold conjecture but also the Conley conjecture on periodic points of these homeomorphisms \cite{lecalvez06}. 

In striking contrast to the rich theory in dimension two, there are virtually no  results on fixed point theory of Hamiltonian homeomorphisms in higher dimensions.  Indeed, none of the powerful tools of surface dynamics seem to generalize in an obvious manner to dimensions higher than two. Our first theorem proves that in fact one can not hope to prove the Arnold conjecture in higher dimensions.

\begin{theo}
Every closed and connected symplectic manifold of dimension at least 4 admits a Hamiltonian homeomorphism with a single fixed point.
\end{theo}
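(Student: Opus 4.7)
The plan is to produce the desired Hamiltonian homeomorphism $\phi$ of $M$ as a uniform limit of a sequence $\{\phi_k\}_{k \geq 0}$ of Hamiltonian diffeomorphisms whose fixed points all coalesce to a single preassigned point $p^* \in M$. The guiding observation is that if $\phi_k \to \phi$ uniformly and $\mathrm{Fix}(\phi_k)$ is contained in a nested sequence of compact sets $K_k$ with $\bigcap_k K_k = \{p^*\}$, then $\mathrm{Fix}(\phi) \subseteq \{p^*\}$, since any fixed point of $\phi$ must be an accumulation point of fixed points of the $\phi_k$. Thus the $C^0$ limit can have strictly fewer fixed points than any of its smooth approximants, even though each $\phi_k$ must satisfy the Arnold conjecture in its classical smooth form.

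The construction is essentially local. First I would fix a Darboux chart $U$ around $p^*$ identified with a ball $B^{2n} \subset \R^{2n}$, and use crucially that $n \geq 2$: in this range of dimensions, any compact subset of an open ball in $\R^{2n}$ is displaceable by a compactly supported Hamiltonian isotopy, whereas in dimension two area preservation rules out such flexibility (consistent with the fact that Arnold's conjecture holds $C^0$ in dimension two). Starting from some $\phi_0 \in \Ham(M,\omega)$ with only finitely many fixed points (for instance the time-$\varepsilon$ flow of a Morse function one of whose critical points is $p^*$), I would construct inductively $\phi_{k+1} = \phi_k \circ \alpha_k$, where $\alpha_k \in \Ham(M,\omega)$ is supported in a small region enclosing the unwanted fixed points of $\phi_k$. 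Each $\alpha_k$ is chosen so that $\phi_{k+1}$ still fixes $p^*$, all of its remaining fixed points lie in a ball $B(p^*, \varepsilon_{k+1})$ with $\varepsilon_k \to 0$, and $d_{C^0}(\phi_{k+1}, \phi_k) \leq 2^{-k}$; this summable bound makes $\{\phi_k\}$ Cauchy in $C^0$, and its limit $\phi$ is by definition a Hamiltonian homeomorphism.

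The main technical obstacle will be to simultaneously control the $C^0$ norm of $\alpha_k$ and its effect on the fixed-point set. Perturbations close to the identity in $C^0$ typically cannot move fixed points arbitrarily (degree-type obstructions persist), so $\alpha_k$ must be allowed to be large in magnitude on its support. The resolution is to take the support of $\alpha_k$ to be a shrinking collection of small balls, one around each unwanted fixed point of $\phi_k$, in which $\alpha_k$ may be constructed freely using displaceability; since the diameter of the support is small, the global $C^0$ distance from the identity is also small, even though the motion inside is not. The dimension hypothesis $n \geq 2$ is used decisively here to arrange that after composition, $\phi_{k+1}$ has no fixed point in the bulk of the modified region, only in a much smaller sub-ball used to \emph{park} any residual fixed points demanded by the Arnold conjecture applied to $\phi_{k+1}$ itself; these parked fixed points are then shepherded toward $p^*$ in subsequent steps.

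Once the sequence is constructed, the verification is largely automatic: $\phi$ is a Hamiltonian homeomorphism by definition, it fixes $p^*$ since each $\phi_k$ does, and if $q \neq p^*$ then for all sufficiently large $k$ one has $q \notin \mathrm{Fix}(\phi_k)$ together with $\phi_k(q)$ bounded away from $q$, and the summable $C^0$ bounds transfer this to $\phi(q) \neq q$. Hence $\phi$ has $p^*$ as its unique fixed point, which is the assertion of the theorem.
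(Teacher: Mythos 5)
Your strategy shares the paper's philosophy that the desired homeomorphism should be a $C^0$ limit of Hamiltonian diffeomorphisms $\phi_k$ whose fixed point sets concentrate, but the mechanism you propose — composing $\phi_{k+1}=\phi_k\circ\alpha_k$ with $\alpha_k$ supported near the unwanted fixed points, $d_{C^0}(\phi_{k+1},\phi_k)\le 2^{-k}$ — has a gap that I believe is fatal as stated. The fixed points of $\phi_{k+1}$ that lie inside $\mathrm{supp}(\alpha_k)$ can only differ from those of $\phi_k$ within that support, so the distance a fixed point migrates per step is bounded by $\mathrm{diam}(\mathrm{supp}(\alpha_k))$, which in turn bounds $d_{C^0}(\phi_{k+1},\phi_k)$ up to the modulus of continuity of $\phi_k$. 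Summability of the latter therefore caps the \emph{total} distance the Arnold-mandated fixed points can be ``shepherded.'' Since the unwanted critical points of your initial $C^2$-small Morse function sit at bounded distance from $p^*$, and that distance exceeds $\sum_k 2^{-k}$ after any rescaling, the parked fixed points simply cannot reach $p^*$. There is a second, related issue: the Lefschetz index over the modified region is a boundary invariant, so ``displacing'' fixed points out of the bulk while keeping a sub-ball untouched forces compensating fixed points to reappear inside the modified region, not near $p^*$ — so the step ``remaining fixed points lie in $B(p^*,\varepsilon_{k+1})$'' is not achievable by a perturbation supported away from $B(p^*,\varepsilon_{k+1})$.

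The paper circumvents both obstructions by using \emph{conjugation} rather than \emph{composition}, and by first organizing the fixed-point dynamics on a one-dimensional invariant set. Theorem \ref{theo:invariant_tree_2} produces $\psi\in\Hameo(M,\omega)$ close to $\phi_H^1$ whose fixed points all lie on a $\psi$-invariant embedded tree $T$ (built via a quantitative $h$-principle for curves — this, not displaceability, is where $\dim M\ge 4$ enters, as curves then have codimension $\ge 3$ and become flexible). Proposition \ref{prop:from-tree-to-point} then conjugates $\psi$ by a sequence $\varphi_i$ of symplectomorphisms collapsing $T$ to a point $p$: the conjugates $\varphi_i\psi\varphi_i^{-1}$ move fixed points arbitrarily far because the $\varphi_i$ are not $C^0$-bounded, yet the conjugates still $C^0$-converge precisely because $\psi(T)=T$ absorbs the blow-up of $\varphi_i^{-1}$ near $T$. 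This conjugation trick is what lets the fixed points travel unbounded distances while the approximating sequence stays Cauchy, which is exactly what your composition scheme cannot do. Incidentally, the claim that any compact subset of an open ball in $\R^{2n}$ is displaceable inside that ball for $n\ge 2$ is false (volume obstructions persist: a compact set occupying more than half the volume cannot be symplectically disjoined from itself); the compact sets that arise here are small, so this is repairable, but it should not be cited as the essential use of the dimension hypothesis.
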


This theorem might suggest that, in dimensions higher than two, one should search for a different notion of Hamiltonian homeomorphisms. Indeed, such notion does exist within the field of continuous, or $C^0$,  symplectic topology.  Motivated in part by developing a continuous analogue of smooth Hamiltonian dynamics, M\"uller and Oh have suggested an alternative, more restrictive, definition for Hamiltonian homeomorphisms; see Section \ref{sec:sympl-hamilt-home} for the precise definition.  From this point onward by Hamiltonian homeomorphisms we will mean those homeomorphisms of $M$ prescribed by Definition \ref{def:hameo}.  We denote the set of all Hamiltonian homeomorphisms by $\Hameo(M, \omega)$. 

The group $\Hameo(M, \omega)$ has met some success.  Indeed, recent results in $C^0$-symplectic topology \cite{HLS12, HLS13, HLS14} have demonstrated that Hamiltonian homeomorphisms inherit some of the important dynamical properties of smooth Hamiltonian diffeomorphisms; see Theorem \ref{theo:coiso-unique}.  Furthermore, they have played a key role in the development of $C^0$-symplectic topology over the past several years.  However, our main theorem proves that the Arnold conjecture is not true for this notion of Hamiltonian homeomorphisms either.  In fact, as we will explain below, it shows that there is no hope for proving the Arnold conjecture,  as formulated above, for any alternate definition of Hamiltonian homeomorphisms which satisfies a minimal set of requirements.

\begin{theo}[Main Theorem] \label{theo:main}
Let $(M, \omega)$ denote a closed and connected symplectic manifold of dimension at least $4$.  There exists $f \in \Hameo(M, \omega)$ with a single fixed point.  Furthermore, $f$ can be chosen to satisfy either of the following additional properties.
\begin{enumerate}
\item Let $\mathcal{H}$ be a normal subgroup of $\Sympeo(M, \omega)$ which contains $\Ham(M, \omega)$ as a proper subset.  Then, $f \in \mathcal{H}$.
\item Let $p$ denote the unique fixed point of $f$.  Then, $f$ is a symplectic diffeomorphism of $M \setminus \{p\}$.
\end{enumerate}
\end{theo}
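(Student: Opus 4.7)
The strategy is to build $f$ as a $C^0$-limit of genuine Hamiltonian diffeomorphisms, with all delicate behavior concentrated in a single Darboux ball around the intended fixed point. Fix a symplectic embedding $\iota \colon B^{2n}(1) \hookrightarrow M$ with $\iota(0) = p$. I would first produce a Hamiltonian diffeomorphism $g$ of $M$ whose fixed-point set is a finite collection $\{p_1, \ldots, p_N\}$ all contained in $\iota(B^{2n}(1/2))$: take the time-$1$ map of a small generic Morse Hamiltonian to get isolated nondegenerate fixed points, then conjugate by a Hamiltonian isotopy that clusters them into the small ball, which is possible because $M$ is connected. In particular $g$ has no fixed points outside $\iota(B^{2n}(1/2))$.

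The core of the argument is a local construction: produce a sequence of Hamiltonian diffeomorphisms $\Psi_k$ of $M$, each supported inside $\iota(B^{2n}(1))$, such that
\begin{enumerate}
\item the compositions $\phi_k := \Psi_k \circ g$ converge uniformly on $M$ to a map $f$;
\item $f$ is a homeomorphism, and the sequence of generating Hamiltonians is Cauchy in the sense required by Definition \ref{def:hameo}, placing $f \in \Hameo(M, \omega)$;
\item $\mathrm{Fix}(f) = \{p\}$.
\end{enumerate}
Schematically, at step $k$ one arranges $\Psi_k$ to push $p_2, \ldots, p_N$ and their dynamical neighborhoods closer and closer to $p$ along nested symplectic contractions, while performing rotational or shear-like perturbations on shrinking annuli around $p$ so that no new fixed points survive at any intermediate scale. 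The dimension hypothesis $2n \geq 4$ enters essentially here: it provides the extra symplectic room needed for each $\Psi_k$ to accomplish its task with exponentially decaying Hofer norm, and no analogue of such a construction is possible in dimension two by Matsumoto's theorem.

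Property (2) is obtained by refining the building blocks: one designs each $\Psi_k$ to coincide with $\Psi_{k-1}$ outside a shrinking neighborhood of $p$ and to be smooth on each annular difference, ensuring $f$ is smooth and symplectic on $M \setminus \{p\}$ while remaining only continuous at $p$. For property (1), given a normal subgroup $\mathcal{H}$ of $\Sympeo(M, \omega)$ with $\Ham(M, \omega) \subsetneq \mathcal{H}$, I would pick an element $h \in \mathcal{H} \setminus \Ham(M, \omega)$ supported in a region disjoint from the orbits produced by the core construction, and insert conjugates by $h$ into selected pieces of the $\Psi_k$. Normality of $\mathcal{H}$ and the inclusion $\Ham(M, \omega) \subseteq \mathcal{H}$ then force $f \in \mathcal{H}$, while the support condition on $h$ preserves the single-fixed-point property.

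The principal obstacle is the local construction of the $\Psi_k$ in item~(2): they must be chosen so that simultaneously (a) the limit fixes only $p$ rather than some accumulation set, (b) the limit is a genuine homeomorphism and not merely a continuous surjection, and (c) the Cauchy condition of Definition \ref{def:hameo} holds. Reconciling these three demands rules out any abstract existence argument and forces an explicit, hands-on construction of the $\Psi_k$; it is precisely at this step that the hypothesis $2n \geq 4$ is used in an essential way, reflecting the fact that the Arnold conjecture for Hameos of surfaces does hold.
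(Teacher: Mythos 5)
Your proposal has a genuine gap. Every Hamiltonian diffeomorphism of a closed symplectic manifold carries a lower bound on its number of fixed points (at least the cup length in the aspherical case, by Floer and Hofer, and similar bounds in general), so each approximant $\phi_k = \Psi_k \circ g$ in your scheme necessarily has many fixed points. You never identify the mechanism that forces all of them to accumulate only at $p$, nor why the limit has exactly one fixed point; in fact composing $\Psi_k \circ g$ (rather than conjugating) gives essentially no control over the fixed-point set, since a compactly supported perturbation near $p$ can create, destroy, and displace fixed points unpredictably. The phrase ``rotational or shear-like perturbations on shrinking annuli\ldots so that no new fixed points survive'' is precisely where the whole difficulty lives, and it is left as an assertion. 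Likewise, the role of $\dim M \geq 4$ is invoked abstractly (``extra symplectic room''), whereas in a correct proof it should do a precise job.

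The paper's construction is structurally different, and the difference is what supplies the missing control. In a first step (Theorem~\ref{theo:invariant_tree_2}), one builds a Hamiltonian \emph{homeomorphism} $\psi$ together with an embedded tree $T$ such that $\psi(T)=T$, all fixed points of $\psi$ lie on $T$, and $\psi$ is smooth off $T$; the tree is assembled from the Reeb graph of a Morse function by realizing each edge as an invariant curve, and the realization is exactly where $\dim M \geq 4$ is used, via the quantitative $h$-principle for curves (Proposition~\ref{prop:h_principle}). In a second step (Proposition~\ref{prop:from-tree-to-point}), one \emph{conjugates} $\psi$ by Hamiltonian diffeomorphisms $\varphi_i$ that collapse nested neighborhoods of $T$ onto a point $p$; the invariance $\psi(T)=T$ is exactly what makes $\varphi_i \psi \varphi_i^{-1}$ converge uniformly even though $\varphi_i^{-1}$ diverges, and conjugation keeps the fixed-point set equal to $\varphi_i(\mathrm{Fix}(\psi))$, so the fixed points collapse onto $p$ in a fully controlled way. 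Off $p$ the limit $f$ is conjugate to $\psi|_{M\setminus T}$, which is fixed-point-free, giving $\mathrm{Fix}(f)=\{p\}$. Your treatment of property~(1) is also too loose: the paper instead factors $f = h \circ g$ with $h \in \Ham(M,\omega)$ and $g$ a symplectic homeomorphism supported in a small ball $B$, chooses $\phi_K^1 \in \Ham(M,\omega)$ displacing $p$ off $B$, and replaces $f$ by $h \circ g\,\phi_K^1\, g^{-1}(\phi_K^1)^{-1}$, which manifestly lies in the normal closure of $\Ham(M,\omega)$ inside $\Sympeo(M,\omega)$ and is shown, using the displacement, to still have $p$ as its unique fixed point.
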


A few remarks are in order.  First, we should point out that every Hamiltonian homeomorphism possesses at least one fixed point. This is because a Hamiltonian homeomorphism is by definition a uniform limit of Hamiltonian diffeomorphisms and it is a non-trivial fact that a Hamiltonian diffeomorphism has at least one fixed point. \footnote{ This fact is an immediate consequence of Floer's proof of the Arnold conjecture; see also \cite{Grom}.} 

Second, we remark that it is well known that $\Ham(M, \omega)$ is a normal subgroup of $\Symp(M, \omega)$. Hence, it is reasonable to expect that any alternative candidate, say $\mathcal{H}$, for the group of Hamiltonian homeomorphisms should contain $Ham(M, \omega)$ and be a normal subgroup of $\Sympeo(M, \omega)$.  It is indeed the case that $\Hameo(M, \omega) \unlhd \Sympeo(M, \omega)$.  Therefore, the first property in the above theorem states that there is no hope of proving the Arnold conjecture for any alternate definition of Hamiltonian homeomorphisms.

 Lastly, with regards to the second property, we point out that it is natural to expect  $ f $ to have at least one non-smooth point. Indeed, since Hamiltonian Floer homology predicts that a Hamiltonian diffeomorphism can never have as few as one fixed point, our homeomorphism $f$ must necessarily be non-smooth on any symplectic manifold $(M, \omega)$ with  the property \footnote{ It can be shown that this property holds for closed symplectic surfaces, as well as for the standard $\mathbb{C}P^2$ and monotone $S^2 \times S^2$.} that $ \Hameo(M,\omega) \cap \mathrm{Diff}(M) = \Ham(M,\omega)$.

\subsection{Does there exist a fixed point theory for Hamiltonian homeomorphisms?} \label{sec:disc-does-there} 
 In Gromov's view \cite{Grom2}, symplectic topology is enriched by a beautiful interplay between rigidity and flexibility.  Recent results, such as \cite{ops, HLS13, BuOp},  have demonstrated that this contrast between rigidity and flexibility permeates, in a surprising fashion, to $C^0$ symplectic topology as well. Symplectic rigidity manifests itself when  symplectic phenomena survives under $C^0$ limits; see  \cite{cardin-viterbo, entov-polterovich, buhovsky, ops, HLS13} for some examples.  On the other hand, there exist instances where passage to $C^0$ limits results in spectacular loss of rigidity and prevalence of flexibility; see \cite{BuOp} for an example.

The main theorem of our paper tells us that fixed points of Hamiltonian diffeomorphisms become completely flexible under $C^0$ limits.  It is  interesting to contrast this prevalence of flexibility with the strong rigidity results of Franks \cite{franks}, Matsumoto \cite{matsumoto}, and Le Calvez \cite{lecalvez05, lecalvez06}
in the two-dimensional setting.  Given the main result of this article, one might conclude that there is no hope of developing a sensible fixed point theory for any notion of Hamiltonian homeomorphisms in dimensions greater than two.  However, there exist some interesting open questions which remain unanswered.  
  
  The most prominent open question is that of the Conley conjecture which in its simplest form states that a Hamiltonian diffeomorphism on an aspherical symplectic manifold has infinitely many periodic points.  This conjecture was proven by Hingston \cite{hingston} on tori and Ginzburg \cite{ginzburg} in the more general setting.  As mentioned earlier, the Conley conjecture has been proven for Hamiltonian homeomorphisms of surfaces by Le Calvez \cite{lecalvez05, lecalvez06}.  We have not been able to construct a counterexample to the Conley conjecture in higher dimensions.  
  
  The second question relates to the theory of spectral invariants.  For the sake of simplicity, we limit this discussion to the case of symplectically aspherical manifolds. In that case, the theory of spectral invariants, which was introduced by Viterbo, Oh and Schwarz \cite{viterbo, Oh05b, schwarz}, associates to each smooth Hamiltonian $H$,  a collection of real numbers $\{c(a, H) \in \mathbb{R}: a \in H_*(M) \setminus \{0\} \},$ where $H_*(M)$ denotes the singular homology of $M$.  These numbers are referred to as the spectral invariants of $H$ and they correspond to critical values of the associated action functional.  Hence, the number of distinct spectral invariants of a Hamiltonian $H$ gives a lower bound for the number of fixed points of the time--$1$ map $\phi_H^1$.

    Recall that the cup length of $M$ is defined by $cl(M):= \max \{k+1: \exists \, a_1, \cdots, a_k \in H^*(M)\,:\, \forall i, \deg(a_i)\neq 0 \text{ and }  a_1 \cup \cdots \cup a_k \neq 0\}$. Combining techniques from Hamiltonian Floer theory and Lusternik-Shnirelman theory, Floer \cite{floer89b} and Hofer \cite{hofer} proved that if a Hamiltonian diffeomorphism, of an aspherical symplectic manifold $M$, has fewer spectral invariants than the cup length of $M$, then it must have infinitely many fixed points; see also \cite{howard}.

It is well-known that one can associate spectral invariants to any continuous  Hamiltonian function; see for example \cite{muller-oh}.  In an interesting twist, it turns out that the Hamiltonian homeomorphism that we construct in the proof of Theorem \ref{theo:main}  is \emph{generated by a continuous Hamiltonian which has at least as many distinct spectral invariants as $cl(M)$.} Hence, we see that the correspondence between spectral invariants and fixed points breaks down in the continuous setting. See Remark \ref{rem:cl}.  This leads us to the following question:
  \begin{question}
 Suppose that $H$ is a continuous Hamiltonian with fewer spectral invariants than the cup length of $M$.  Does $\phi^1_H$, the time--1 map of the flow of $H$, have infinitely many fixed points?
  \end{question}  
 
  A positive answer to this question could be interpreted as a $C^0$ version of the Arnold conjecture.

We end this section with a brief discussion which will add to the importance of the above question.  This concerns the theory of barcodes, or persistence modules.  As pointed out in \cite{PS14}, Hamiltonian Floer theory allows one to associate a so-called barcode to any smooth Hamiltonian; see also \cite{Barann,LNV,UZ}.  Barcodes can be viewed as generalizations of spectral invariants. The barcode of a smooth Hamiltonian encodes all the information contained in the filtered Floer homology of that Hamiltonian.  In the same way that one can associate spectral invariants to a continuous function, one can also associate a barcode to a continuous  Hamiltonian function.  In yet another interesting twist, it turns out that the
Hamiltonian homeomorphism of Theorem \ref{theo:main} can be generated by a continuous Hamiltonian which has the same barcode as a $C^2$-small Morse function.  See Remark \ref{rem:barcode}.

\subsection{A brief outline of the construction}
  Construction of the homeomorphism $f$, as prescribed in Theorem \ref{theo:main}, takes place in two major steps.  The first step, which is the more difficult of the two, can be summarized in the following theorem.
  \begin{theo}\label{theo:invariant_tre}
  Let $(M, \omega)$ denote a closed and connected symplectic manifold of dimension at least $4$.  There exists $\psi \in \Hameo(M, \omega)$ and an embedded tree $T \subset M$ such that
  \begin{enumerate}
  \item $T$ is invariant under $\psi$, i.e. $\psi(T) = T$,
  \item All of the fixed points of $\psi$ are contained in $T$, 
  \item $\psi$ is smooth in the complement of $T$.
  \end{enumerate}
  \end{theo}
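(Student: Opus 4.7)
\smallskip

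The plan is to build $\psi$ as a uniform limit of smooth Hamiltonian diffeomorphisms $\phi_n$ whose fixed-point sets collapse onto $T$. I would begin with a $C^2$-small Morse function $h$ on $M$; its time-$1$ Hamiltonian flow $\phi_0 = \phi_h^1$ has non-degenerate fixed points precisely at the critical points $\{p_1,\dots,p_N\}$ of $h$. Using the assumption $\dim M \geq 4$, choose a smoothly embedded tree $T\subset M$ whose vertex set contains $\{p_1,\dots,p_N\}$: concretely, join the critical points by pairwise disjoint smoothly embedded arcs, concatenated at auxiliary interior vertices. The ambient dimension is what guarantees that such arcs (and small tubular neighborhoods around them) can be chosen disjoint, and that the normal bundle of each edge has real rank $\geq 3$, leaving room for a Hamiltonian rotation of the sort used below.

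The main step is an inductive modification. Fix a nested family of open neighborhoods $U_1 \supset \overline{U_2} \supset U_2 \supset \cdots$ with $\bigcap_n U_n = T$. Assuming that all fixed points of $\phi_n$ lie in $U_n$, I would construct a Hamiltonian diffeomorphism $\theta_n$, supported in $U_n$ and $C^0$-close to the identity, such that $\phi_{n+1} := \theta_n \phi_n \theta_n^{-1}$ has all fixed points in $U_{n+1}$. The key local model is on a tubular neighborhood $D^{2n-1}\times[0,1]$ of an edge of $T$: any fixed point of $\phi_n$ lying in the larger annular shell is pushed radially/rotationally (in the fibers of the normal bundle) toward the central arc by $\theta_n$, using a Hamiltonian that vanishes near the boundary of the tube and that, thanks to the rank-$\geq 3$ normal bundle, can be chosen so that the conjugate creates no new fixed points in the shell. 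Outside $T$ one can take $\theta_n$ to be the identity on any fixed compact set $K\Subset M\setminus T$ once $n$ is large enough, so that $\phi_n|_K$ is eventually constant in $n$; this will give smoothness of the limit on $M\setminus T$.

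Convergence is arranged by choosing the Hofer norms and $C^0$-sizes of the $\theta_n$ to decay geometrically. Then $\phi_n$ is Cauchy in the uniform topology and the concatenated generating Hamiltonians converge in the topology defining $\Hameo(M,\omega)$ in the sense of Definition~\ref{def:hameo}, so the limit $\psi$ belongs to $\Hameo(M,\omega)$. By construction $\phi_n$ stabilizes off $T$, so $\psi$ is smooth on $M\setminus T$ (property 3); any fixed point of $\psi$ is a limit of fixed points of $\phi_n$ and hence lies in $\bigcap_n U_n = T$ (property 2); invariance $\psi(T)=T$ (property 1) is inherited because each $\theta_n$, being supported in a tubular neighborhood and constructed fiberwise over $T$, preserves $T$ setwise, and this invariance is preserved under the uniform limit.

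The principal obstacle is the local inductive step: producing $\theta_n$ that genuinely reduces the fixed-point locus from $U_n$ into $U_{n+1}$ while simultaneously (i) being generated by a Hamiltonian whose Hofer/$C^0$ size is controlled by a summable sequence, (ii) being the identity outside an arbitrarily thin neighborhood of $T$, and (iii) not creating spurious new fixed points along the way. Treating this carefully edge-by-edge and at the vertices of $T$, and checking that the modifications on different edges can be performed in parallel without interfering, is where the dimension hypothesis $\dim M \geq 4$ is essential and where the technical heart of the argument lies.
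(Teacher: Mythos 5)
Your plan rests on a conjugation scheme, $\phi_{n+1}=\theta_n\phi_n\theta_n^{-1}$, aimed at pushing the fixed points of $\phi_n$ into ever‐smaller neighborhoods of a \emph{pre-chosen} embedded tree $T$; but this does not address the actual difficulty of the theorem, and the step where you claim $\psi(T)=T$ does not go through. Two concrete problems. First, conjugation only relocates fixed points: $\mathrm{Fix}(\theta_n\phi_n\theta_n^{-1})=\theta_n(\mathrm{Fix}(\phi_n))$. Since $\phi_0=\phi_h^1$ already has $\mathrm{Fix}(\phi_0)=\mathrm{Crit}(h)\subset T$, the inductive hypothesis ``all fixed points of $\phi_n$ lie in $U_n$'' is satisfied vacuously with $\theta_n=\mathrm{id}$; nothing forces you to move anything, and property~(2) is trivial. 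Second, and more seriously, the invariance $\psi(T)=T$ does \emph{not} follow from ``$\theta_n$ preserves $T$''. You have $\theta_n\phi_n\theta_n^{-1}(T)=\theta_n(\phi_n(T))$, so you would need $\phi_n(T)=T$ to start with, and $\phi_0=\phi_h^1$ will not preserve an arbitrary embedded tree whose edges are smooth arcs joining critical points. Invariance cannot be ``inherited'' through the conjugation because it is false at stage $n=0$. In short, your proposal treats property~(2) (the easy one in this setup) as the main obstacle, and glosses over property~(1), which is where the real work lies.

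The paper's route is essentially opposite in spirit and does address the invariance directly. It builds $\psi=\theta_N\circ\cdots\circ\theta_1\circ\phi_H^1$ by \emph{left-composing} $\phi_H^1$ with Hamiltonian homeomorphisms $\theta_i$ supported near the edges (not by conjugation), and --- crucially --- the tree $T$ is \emph{not} chosen in advance. One takes a spanning tree of the Reeb graph of a $C^2$-small Morse function $H$ (with normal forms at critical points, Lemma~\ref{lemma:initial-morse-function}), and then, for each edge, constructs a curve $\alpha:\R\to M$ joining two critical points together with a perturbation $\theta$ of the identity so that $\theta\circ\phi_H^1(\alpha(t))=\alpha(t+1)$ (Theorem~\ref{theo:invariant_curve}). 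The union of these curves is $T$, and the functional equation $\psi(\alpha(t))=\alpha(t+1)$ is what makes $T$ invariant. Building these curves is where dimension $\geq4$, Poincar\'e recurrence, and the quantitative $h$-principle for curves (Proposition~\ref{prop:h_principle}) enter: one constructs a pseudo-orbit from one critical point to the next, then uses the $h$-principle to close up the jumps by $C^0$-small Hamiltonians with small Hofer norm, taking special care near the elliptic and hyperbolic critical points so that no new fixed points are created (Theorems~\ref{theo:invariant_curve_max_to_nearby} and~\ref{theo:connecting-non-critical}). Your nested-conjugation mechanism does appear in the paper, but at a different stage: it is the device used in Proposition~\ref{prop:from-tree-to-point} to collapse an \emph{already invariant} tree to a single fixed point, and there the invariance of $T$ under $\psi$ is precisely what makes the conjugated sequence converge.
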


 For the proof of Theorem \ref{theo:main}, we will in fact  need a refined version of the above result; see Theorem \ref{theo:invariant_tree_2}.
  The proof of this theorem forms the technical heart of our paper.  An important ingredient used in the construction of the invariant tree $T$ is a quantitative $h$-principle for curves.  Quantitative $h$-principles have recently been introduced to $C^0$ symplectic topology by Buhovsky and Opshtein and have had numerous fascinating applications; see \cite{BuOp}.   We should point out that $M$ having dimension at least four is used in a crucial way in the proof of this theorem.  In fact, the second step of the construction, which is outlined below, can be carried out on surfaces as well.
  
  The second major step of our construction consists of ``collapsing''  the invariant tree $T$ to a single point which will be the fixed point of our homeomorphism $f$. Here is a brief outline of how this is done.  Fix a point $p \in M$.  We construct a sequence $\phi_i \in \Symp(M, \omega)$ such that $\phi_i$ converges uniformly to a map $\phi: M \rightarrow M$ with the following two properties:
  \begin{enumerate}
  \item $\phi(T) = {p}$,
  \item $\phi$  is a symplectic diffeomorphism from $M \setminus{T}$ to $ M \setminus\{p\}.$ 
  \end{enumerate}
  Note that the first property implies that $\phi$ is not a 1-1 map and hence, the sequence $\phi_i^{-1}$ is not convergent. Define $f: M \rightarrow M$ as follows: $f(p) =p$ and  $$\forall x \in M\setminus\{p\},\ f(x)=\phi\circ\psi\circ\phi^{-1}(x). $$  It is not difficult to see that $p$ is the unique fixed point of $f$.  Indeed, on $M\setminus\{p\}$, the map $f$ is conjugate to $\psi: M \setminus{T} \rightarrow M\setminus{T}$ which is fixed point free by construction.
 
  By picking the above sequence of symplectomorphisms $\phi_i$ carefully, it is possible to ensure that the sequence of conjugations $\phi_i\psi \phi_i^{-1}$ converges uniformly to $f$.  The uniform convergence of $\phi_i\psi \phi_i^{-1}$ to $f$ relies heavily on the invariance of the tree $T$ and it occurs despite the fact that the sequence $\phi_i^{-1}$ diverges. The details of this are carried out in Section \ref{sec:collapsing_tree}.  It follows that $f$ can be written as the uniform limit of a sequence of Hamiltonian diffeomorphisms.
  
 It is not difficult to see  that $f$ is smooth on the complement of its unique fixed point.  However, proving that $f$ is a Hamiltonian homeomorphism and that it satisfies the first property listed in Theorem \ref{theo:main} requires some more work; see Section \ref{sec:collapsing_tree}. 
  
\subsection{Organization of the paper}  In Section \ref{sec:prelim}, we recall some preliminary results from $C^0$ symplectic geometry.  Symplectic and Hamiltonian homeomorphisms are introduced in Section \ref{sec:sympl-hamilt-home}.  In Section \ref{sec:quant_h_prin}, we introduce a quantitative $h$-principle for curves which plays an important role in our construction.

In Section \ref{sec:collapsing_tree}, we prove that the existence of a Hamiltonian homeomorphism with an invariant tree, as described in Theorem \ref{theo:invariant_tre}, implies the main theorem of the paper.  In Section \ref{sec:invariant_tree}, we prove the existence of a Hamiltonian homeomorphism as described in Theorem \ref{theo:invariant_tre}, assuming a technical and important result: Theorem \ref{theo:invariant_curve}.  Section \ref{sec:invariant_curve}, which occupies the rest of the paper, is dedicated to the proof of Theorem \ref{theo:invariant_curve}.  This section contains the technical heart of the paper. 

\subsection{Acknowledgments}
We would like to thank Yasha Eliashberg, Viktor Ginzburg, Helmut Hofer, R\'emi Leclercq, Fr\'ed\'eric Le Roux,  Patrice Le Calvez, Emmanuel Opshtein, Leonid Polterovich, and Claude Viterbo for fruitful discussions.

  LB: The research leading to this project began while I was a Professeur Invit\'e at the Universit\'e Pierre et Marie Curie. I  would like to express my deep gratitude to the members of the Institut Math\'ematique de Jussieu - Paris Rive Gauche, especially the team Analyse Alg\'ebrique, for their warm hospitality.  
  
  SS:   I would like to thank the members of the Department of Mathematics at MIT, where a part of this project was carried out, for providing a stimulating research environment.

LB was partially supported by the Israel Science Foundation grant 1380/13, by the Alon Fellowship, and by the Raymond and Beverly Sackler Career Development Chair. VH was partially supported by the Agence Nationale de la Recherche, projets ANR-11-JS01-010-01 and ANR-12-BS020-0020. SS was partially supported by the NSF Postdoctoral Fellowship Grant No. DMS-1401569.

\section{Preliminaries from $C^0$-symplectic topology}\label{sec:prelim}
  In this section we introduce some of our notation and recall some of the basic notions of $C^0$--symplectic geometry.  In Section \ref{sec:sympl-hamilt-home}  we give precise definitions for symplectic and Hamiltonian homeomorphisms.  In Section \ref{sec:quant_h_prin} we state a \emph{quantitative h-principle} for curves  which will play a crucial role in the proof of Theorem \ref{theo:main}.

\subsection{Symplectic and Hamiltonian homeomorphisms}
\label{sec:sympl-hamilt-home}
  Throughout the rest of this paper, $(M, \omega)$ will denote a closed and connected symplectic manifold whose dimension is at least $4$.  We equip $M$ with a Riemannian distance $d$. Given two maps $\phi, \psi \co M \to M,$ we   denote
$$d_{C^0}(\phi,\psi)=\sup_{x\in M}d(\phi(x),\psi(x)).$$
We will say that a sequence of maps $\phi_i : M \rightarrow M$, converges uniformly, or $C^0$--converges, to $\phi$, if $d_{C^0}(\phi_i, \phi) \to 0$ as $ i \to \infty$. Of course, the notion of $C^0$--convergence does not depend on the choice of the Riemannian metric.

Recall that a symplectic diffeomorphism is a diffeomorphism $\theta: M \to M$ such that $\theta^* \omega = \omega$. The set of all symplectic diffeomorphisms of $M$ is denoted by $\Symp(M, \omega)$.

\begin{definition} \label{def:sympeo}
	 A homeomorphism  $\theta \co M \to M$  is said to be symplectic if it is the $C^0$--limit of a sequence of symplectic diffeomorphisms.  We will denote the set of all symplectic homeomorphisms by $\Sympeo(M, \omega)$.  
\end{definition}

The Eliashberg--Gromov theorem states that a symplectic homeomorphism which is smooth is itself a symplectic diffeomorphism. We remark that if $\theta$ is a symplectic homeomorphism, then so is $\theta^{-1}$.  In fact, it is easy to see that $\Sympeo(M, \omega)$ forms a group. 

\begin{remark}\label{rem:gen-def} More generally, one can define a symplectic homeomorphism to be a homeomorphism which is locally a $C^0$--limit of symplectic diffeomorphisms; see \cite{BuOp} for further details. 
\end{remark}

Recall that a smooth Hamiltonian $H: [0,1] \times M \to \R$ gives rise to a Hamiltonian flow $\phi^t_H$.  A Hamiltonian diffeomorphism is a diffeomorphism which arises as the time-one map of a Hamiltonian flow.  The set of all Hamiltonian diffeomorphisms is denoted by $\Ham(M, \omega)$; this is a normal subgroup of $\Symp(M, \omega)$.  We next define Hamiltonian homeomorphisms  as introduced by Müller and Oh \cite{muller-oh}.

\begin{definition}[Hamiltonian homeomorphisms]
  \label{def:hameo}
Denote by $B$ an open (possibly not proper) subset of $M$.  Let $(\phi^t)_{t\in [0,1]}$ be an isotopy of $M$ which is compactly supported in $B$.  We say that $\phi^t$ is a hameotopy, or a continuous Hamiltonian flow, of $B$ if there exists a sequence of smooth and compactly supported  Hamiltonians $H_i:[0,1] \times B \to \R$ such that:
\begin{enumerate}
 \item The sequence of flows $\phi_{H_i}^t$ $C^0$--converges to $\phi^t$, uniformly in $t$, i.e. $\displaystyle \max_{t \in [0,1]} d_{C^0}(\phi^t_{H_i}, \phi^t) \to 0$ as $i \to \infty$.
 \item  The sequence of Hamiltonians $H_i$ converges uniformly to a continuous function $H: [0,1] \times M \to \R$, i.e. $\| H_i - H \|_{\infty} \to 0$ as $ i \to \infty$, where $\| \cdot \|_{\infty}$ denotes the sup norm.  Furthermore, 
\end{enumerate}
We say that $H$ generates $\phi^t$, denote $\phi^t=\phi_H^t$, and call $H$ a continuous Hamiltonian. 

A homeomorphism is called a Hamiltonian homeomorphism if it is the time--$1$ map of a continuous Hamiltonian flow.  We will denote the set of all Hamiltonian homeomorphisms by $\Hameo(B, \omega)$.
\end{definition}

It is not difficult to check that  $\Hameo(M, \omega)$ is a normal subgroup of $\Sympeo(M, \omega)$.

A continuous Hamiltonian $H$ generates a unique continuous Hamiltonian flow; see \cite{muller-oh}. Conversely, Viterbo \cite{viterbo06} and Buhovsky--Seyfaddini \cite{buhovsky-seyfaddini} (see also \cite{HLS12}) proved that a continuous Hamiltonian flow has a unique (up to addition of a function of time) continuous generator.

One can easily check that generators of continuous Hamiltonian flows satisfy the same composition formulas as their smooth counterparts. Namely, if $\phi_H^t$ is a continuous Hamiltonian flow, then $(\phi_H^t)^{-1}$ is a continuous Hamiltonian flow generated by $\bar{H}(t,x) = -H(t,\phi^t_H(x))$; given another continuous Hamiltonian flow $\phi_K^t$, the isotopy $\phi_H^t \phi_K^t$ is also a continuous Hamiltonian flow, generated by $H \# K(t,x) := H(t,x)+K(t,(\phi^t_H)^{-1}(x))$.

 \medskip

We will finish this section by recalling an important dynamical property of continuous Hamiltonian flows.  Recall that a submanifold $C$ of a symplectic manifold $(M, \omega)$ is called coisotropic if for all $p\in C$, $(T_pC)^\omega\subset T_pC$ where $(T_pC)^\omega$ denotes the symplectic orthogonal of $T_pC$. For instance, hypersurfaces and Lagrangians are coisotropic. A coisotropic submanifold carries a natural foliation $\m F$ which integrates the distribution $(TC)^\omega$; $\m F$ is called the characteristic foliation of $C$. 

 Assume that $C$ is a closed and connected coisotropic submanifold of $M$ and suppose that $H$ is a smooth Hamiltonian. The following is a standard and important fact  which relates Hamiltonian flows to coisotropic submanifolds:  $H|_C$ is a function of time if and only if $\phi^t_H$ (preserves $C$ and) flows along the characteristic foliation of $C$. By flowing along characteristics we mean that for any point $p\in C$ and any time $t\geq 0$, $\phi_H^t(p)\in\m F(p)$, where $\m F(p)$ stands for the characteristic leaf through $p$.

The following theorem, which was proven in \cite{HLS13}, establishes the aforementioned property for continuous Hamiltonian flows.

\begin{theo}\label{theo:coiso-unique}
  Denote by $C$ a closed and connected coisotropic submanifold and suppose that $\phi^t_H$ is a continuous Hamiltonian flow. The restriction of $H$ to $C$ is a function of time if and only if $\phi^t_H$ preserves $C$ and flows along the leaves of its characteristic foliation.
\end{theo}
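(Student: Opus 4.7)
The smooth version is classical. For smooth $H$ the Hamiltonian vector field $X_H$ is defined by $\iota_{X_H}\omega = -dH$, and the condition that $H|_C$ is a function of time amounts to $dH$ vanishing on $TC$ at points of $C$, which is equivalent to $X_H \in (TC)^\omega$ on $C$. Since $C$ is coisotropic, $(TC)^\omega \subset TC$ is exactly the characteristic distribution, so tangency of $X_H$ to it is equivalent to $\phi^t_H$ preserving $C$ and flowing along its characteristic foliation. The plan for the continuous case is to reduce to the smooth case via smoothing, using the uniqueness of the continuous Hamiltonian generating a given continuous flow, recalled above (Viterbo and Buhovsky--Seyfaddini).

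For the direction ``$H|_C$ a function of time implies $\phi^t_H$ preserves $C$ and flows along characteristics'', I would first subtract a function of time from $H$ (which leaves $\phi^t_H$ unchanged) to reduce to $H|_C \equiv 0$. Next, given any smooth approximating sequence $H_i \to H$ uniformly with $\phi^t_{H_i} \to \phi^t_H$ in $C^0$, I would modify it into smooth Hamiltonians $\tilde H_i$ that vanish exactly on $C$. A natural choice is $\tilde H_i = H_i - \chi_i \cdot (H_i \circ \pi)$, where $\pi$ is a smooth projection from a tubular neighborhood $U$ of $C$ onto $C$, and $\chi_i$ is a cutoff with $\chi_i \equiv 1$ on $C$ and support shrinking to $C$. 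The modification tends to zero uniformly because $H_i \circ \pi \to H \circ \pi = 0$ on $C$ and both factors are continuous. By the smooth statement, each $\phi^t_{\tilde H_i}$ preserves $C$ and moves points along characteristic leaves. Taking the $C^0$ limit (which must equal $\phi^t_H$ by the uniqueness theorem, provided the limit exists) yields the desired conclusion, since preservation of $C$ is closed under uniform convergence and containment in individual leaves persists for short times in the leaf topology.

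The converse direction would proceed symmetrically: assuming that $\phi^t_H$ preserves $C$ and flows along characteristics, I would build a continuous Hamiltonian $K$ such that $K|_C$ is a function of time and $\phi^t_K$ coincides with $\phi^t_H$ on $C$. Such $K$ is obtained by encoding the characteristic motion on $C$ and extending arbitrarily to a neighborhood, compatibly with the foliation. Then $\phi^t_H$ and $\phi^t_K$ are two continuous Hamiltonian flows that agree on $C$, and the uniqueness theorem for continuous generators forces $H - K$ to be a function of time, so $H|_C$ is too.

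The main obstacle I anticipate is that uniform convergence of $\tilde H_i \to H$ does not by itself guarantee $C^0$-convergence of the generated flows, which is what the argument requires. To bridge this gap one needs either a quantitative energy-type estimate controlling $d_{C^0}(\phi^t_{H_i}, \phi^t_{\tilde H_i})$ in terms of the small, locally supported difference $H_i - \tilde H_i$ (using a Hofer- or spectral-norm bound), or an equicontinuity argument for the sequence $\{\phi^t_{\tilde H_i}\}$ combined with the uniqueness of the continuous generator. This, together with the delicate issue that characteristic leaves of $C$ need not be closed, forms the technical core of the proof carried out in \cite{HLS13}.
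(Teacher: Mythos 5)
The paper does not prove Theorem~\ref{theo:coiso-unique}: the sentence immediately preceding its statement says that it ``was proven in \cite{HLS13},'' so there is no internal argument to compare against. Judging your sketch on its own terms, both directions have gaps, and they are not merely technical.

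In the forward direction you pass from $H_i$ to $\tilde H_i$ vanishing on $C$, with $\|\tilde H_i - H_i\|_\infty \to 0$, and you need $\phi^t_{\tilde H_i}$ to $C^0$-converge to $\phi^t_H$. There is no such implication: $L^\infty$-small time-dependent Hamiltonians can generate maps far from the identity, and the energy--capacity inequality controls displacement energy, not $C^0$-distance. The equicontinuity of $\{\phi^t_{\tilde H_i}\}$ is exactly what would have to be established, so invoking it to bridge the gap is circular. You acknowledge this is ``the main obstacle,'' but it is an obstruction that cannot be removed by estimates of the type you propose; a fundamentally different idea is required. In the converse direction the error is structural: you deduce that $H - K$ is a function of time from the fact that $\phi^t_H$ and $\phi^t_K$ agree \emph{on $C$}. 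The uniqueness-of-generators theorem requires the flows to coincide on an open set (it is formulated for flows of open subsets $B\subset M$, cf.\ Definition~\ref{def:hameo}); a coisotropic submanifold $C$ of positive codimension has empty interior, so the hypotheses fail. The assertion that $\phi^t_H|_C$ determines $H|_C$ up to a function of time is not a corollary of global uniqueness but is, in essence, the content of the theorem you are trying to prove. For the record, the proof in \cite{HLS13} takes a different route, resting on spectral-invariant and displacement-energy arguments attached to coisotropic submanifolds rather than on smoothing the generator.
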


 The above theorem indicates that continuous  Hamiltonian flows inherit some of the fundamental dynamical properties of their smooth counterparts. In light of this, it would some reasonable to expect the Arnold conjecture to hold for Hamiltonian homeomorphisms.  But of course, Theorem \ref{theo:main} tells us that this  is quite far from reality.

\subsection{A quantitative $h$-principle for curves} \label{sec:quant_h_prin}
Quantitative $h$--principles were introduced in \cite{BuOp}, where they were used to construct interesting examples of symplectic homeomorphisms.  We will need the following quantitative $h$--principle for curves in the construction of our counterexample to the Arnold conjecture.  
\begin{prop}[Quantitative $h$-principle for curves] \label{prop:h_principle}
Denote by $(W, \omega)$ a symplectic manifold of dimension at least 4. Let $\eps >0$. Suppose that $\gamma_0, \gamma_1 :[0,1] \rightarrow W$ are two smooth embedded curves such that
\begin{enumerate}
\item[i.]$\gamma_0$ and $\gamma_1$  coincide near $t=0$ and $t=1$,
\item[ii.] there exists a homotopy, rel.end points, from $\gamma_0$ to $\gamma_1$  under which the trajectory of any point of $\gamma_0$ has diameter less than $ \eps $, and the symplectic area of the element of $ \pi_2(M,\gamma_1 \sharp \overline{\gamma_0}) $ defined by this homotopy has area $ 0 $.  
\end{enumerate}

  Then, for any $\rho >0$, there exists a compactly supported Hamiltonian $F$, generating a Hamiltonian isotopy $\varphi^s: W \rightarrow W, \; s \in [0,1]$ such that
  \begin{enumerate}
   \item $ F $ vanishes near $ \gamma_0(0) $ and $ \gamma_0(1) $ (in particular, $\varphi^s$ fixes $\gamma_0$ and $\gamma_1$ near the extremities),
   \item  $\varphi^1 \circ \gamma_0 = \gamma_1$,
   \item  $d_{C^0}(\varphi^s, Id) < 2\eps$ for each $s \in [0,1],$ and $\|F\|_{\infty} \leqslant \rho$,
    \item $F$ is supported in a $ 2 \eps$-neighborhood of the image of $\gamma_0$.
  \end{enumerate}
 \end{prop}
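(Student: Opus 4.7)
The plan is to (a) upgrade the given homotopy to a smooth isotopy through embedded curves sweeping out a $2$-dimensional isotropic surface $\Sigma$, (b) apply a Weinstein-type neighborhood theorem around $\Sigma$ to obtain convenient symplectic coordinates, and (c) write down an explicit Hamiltonian with small sup norm in those coordinates.

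Since $\dim W \geq 4$, embedded curves have codimension $\geq 3$, and a standard general-position argument lets one $C^0$-approximate the given homotopy by a smooth isotopy $(\gamma_\tau)_{\tau\in[0,1]}$ through embeddings, rel.\ endpoints, with each trajectory still of diameter $<\varepsilon$ after an arbitrarily small adjustment. This produces an embedded $2$-disk $\Sigma\subset W$ with $\partial\Sigma = \gamma_1\cup\overline{\gamma_0}$, lying in a thin neighborhood of $\gamma_0$. Using that the total $\omega$-flux through $\Sigma$ vanishes and that $\Sigma$ has codimension $\geq 2$, a further small perturbation of the interior of $\Sigma$ can be made to achieve $\omega|_\Sigma=0$ pointwise, so that $\Sigma$ becomes isotropic; this is a relative h-principle step for isotropic surfaces with prescribed boundary.

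By the isotropic neighborhood theorem, a tubular neighborhood $U$ of $\Sigma$ is symplectomorphic to a neighborhood of the zero section in $T^*\Sigma \oplus E$, where $E\to\Sigma$ is the symplectic normal bundle. I denote points of $U$ by $(q,p,\nu)$ with $q\in\Sigma$, $p\in T^*_q\Sigma$, $\nu\in E_q$; the zero section $\{p=0,\,\nu=0\}$ is $\Sigma$ itself, and both $\gamma_0,\gamma_1$ lie there. Let $X_\tau$ be the time-dependent vector field on $\Sigma$ generating the isotopy $\gamma_\tau$ inside $\Sigma$, arranged to vanish near $\gamma_0(0)$ and $\gamma_0(1)$ where the two curves coincide. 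I would then define
\[
F_\tau(q,p,\nu) \;=\; \langle p,\, X_\tau(q)\rangle\cdot \chi(p,\nu),
\]
where $\chi$ is a smooth cutoff equal to $1$ near the zero section and compactly supported in $U$. A direct computation shows $X_{F_\tau}$ restricted to the zero section equals the horizontal lift of $X_\tau$, so the Hamiltonian flow $\varphi^s_F$ satisfies $\varphi^1_F\circ\gamma_0 = \gamma_1$, and $F$ vanishes near $\gamma_0(0),\gamma_0(1)$.

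Because $F_\tau$ is linear in $p$ and vanishes on the zero section, $\|F_\tau\|_\infty$ is bounded by $\sup|X_\tau|$ times the $p$-extent of $\mathrm{supp}\,\chi$. Choosing a straight-line interpolation in the Weinstein coordinates yields $\sup|X_\tau|\leq \varepsilon + o(1)$; then shrinking the transverse diameter of $\mathrm{supp}\,\chi$ to $\min(\rho/\varepsilon,\,\varepsilon)$ simultaneously gives $\|F\|_\infty\leq \rho$ and places $\mathrm{supp}\,F$ inside a $2\varepsilon$-neighborhood of $\gamma_0$. The $C^0$-bound $d_{C^0}(\varphi^s,\id)<2\varepsilon$ follows because the flow is supported inside this thin tube, whose total transverse diameter is $<2\varepsilon$. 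The main technical obstacle is the isotropic perturbation of $\Sigma$: upgrading integrated vanishing of $\omega|_\Sigma$ to pointwise vanishing, while keeping $\partial\Sigma$ fixed and $\Sigma$ within a thin tube, is the heart of the argument and the one step that genuinely uses both the zero-area hypothesis and $\dim W\geq 4$ simultaneously.
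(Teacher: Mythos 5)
Your approach is conceptually elegant but has a genuine gap at its central step, and that gap is fatal precisely in the dimension $4$ case that the paper needs.

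The crux of your argument is the claim that the embedded $2$-disk $\Sigma$ swept out by the isotopy, once it has total $\omega$-area zero, can be perturbed (rel.\ boundary, small in $C^0$) to an \emph{isotropic} surface. When $\dim W=4$, a $2$-dimensional isotropic surface is Lagrangian, and Lagrangian embeddings do \emph{not} satisfy an $h$-principle — this is one of the basic rigidity phenomena of symplectic topology (Gromov). The vanishing of the total symplectic area of $\Sigma$ is necessary for making $\Sigma$ Lagrangian, but it is far from sufficient; there are genuine obstructions (Maslov-type and Floer-theoretic) to bounding a given curve by an embedded Lagrangian disk, and none of them are addressed by the pointwise-smallness of the homotopy. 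Even in dimensions $\geq 6$, where $2$-dimensional isotropics are subcritical and the isotropic $h$-principle does apply, you must first exhibit a \emph{formal} isotropic solution — i.e.\ homotope the Gauss map of $\Sigma$ into the isotropic Grassmannian relative to the boundary data — and you do not verify that this obstruction vanishes, nor that the resulting isotropic surface can be kept inside a thin tube around $\gamma_0$ (the $h$-principle approximates the \emph{map}, but the isotropic perturbation may need to leave a small neighborhood of the original surface when the formal homotopy is long). Your own statement that this step ``genuinely uses both the zero-area hypothesis and $\dim W\geq 4$'' is correct as a diagnosis — but ``$\dim W\geq 4$'' is exactly where it breaks, and you supply no argument or reference for it.

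By contrast, the paper never attempts to produce a global isotropic filling. It subdivides $[0,1]$ into small pieces, covers the relevant region by small balls $U_i$, and proceeds in three stages: first match $\gamma_0(i/m)$ to $\gamma_1(i/m)$ by tiny local Hamiltonians; then correct the $\lambda$-action of each short arc so that corresponding arcs have equal action, using a ``shrink-and-expand'' trick to keep Hofer norm small; finally map each arc of $\gamma_0''$ to the corresponding arc of $\gamma_1$ by a local Hamiltonian, with the Hofer norm controlled via a conjugation trick and Lemma A.3 of \cite{BuOp}. Dimension $\geq 4$ enters only through soft general-position arguments (making the sweep an embedding, separating curves and homotopies), and the zero-area hypothesis enters only through the scalar action bookkeeping — there is no appeal to any flexibility theorem for isotropic submanifolds. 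This is what makes the paper's argument robust in dimension $4$, and is the decomposition your proposal is missing.
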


The existence of a Hamiltonian $F$ satisfying only properties 1 and 2 is well known. The aspect of the above proposition which is non-standard is the fact that $F$ can be picked such that properties 3 and 4 are satisfied as well. We should point out that the above proposition is a variation of a quantitative $h$--principle for discs which appeared in Theorem 2 of \cite{BuOp}. The proof we will present is an adaptation of the arguments therein. In fact, the quantitative $h$--principle of \cite{BuOp} is considerably more difficult to prove than our proposition and for this reason we will only sketch an overview of the proof of the above proposition.

%
%
%

\begin{proof}[Proof of Proposition \ref{prop:h_principle}]



  First, by a slight Hamiltonian perturbation of $ \gamma_0 $ via a Hamiltonian diffeomorphism generated by a $ C^\infty$-small Hamiltonian function which vanishes near $ \{ \gamma_0(0), \gamma_0(1) \} $, we can, without loss of generality, assume that $ \gamma_0 = \gamma_1 $ on $ [0,\delta] \cup [1-\delta,1] $, and that the images of $ \gamma_0|_{(\delta,1-\delta)} $ and $ \gamma_1|_{(\delta,1-\delta)} $ are disjoint, where $ \delta > 0 $ is a small positive real number. By assumption there exists a homotopy $h(s,t):[0,1] \times [0,1] \rightarrow W$  such that $h(0,t) = \gamma_0(t)$, $h(1,t) = \gamma_1(t)$ and for any fixed $t$, the path $s \mapsto h(s,t)$ is of diameter smaller than $\eps.$  Since the dimension of $W$ is at least $4$, by the weak Whitney immersion theorem, we can  approximate $ h $ by a smooth map $ h' : [0,1] \times [0,1] \rightarrow M $ such that $ h'(0,t) = \gamma_0(t) $ and $ h'(1,t) = \gamma_1(t) $ for $ t \in [0,1] $, $ h'(s,t) = \gamma_0(t) = \gamma_1(t) $ for $ (s,t) \in [0,1] \times ([0,\delta] \cup [1-\delta,1]) $, and such that the restriction $ h'|_{[0,1] \times (\delta,1-\delta)} $ is a smooth immersion with a finite number of self-intersection points occuring inside the relative interior $ h((0,1) \times (\delta, 1-\delta)) $, and whose image $ h'([0,1] \times (\delta,1-\delta)) $ does not intersect $ \gamma_0([0,\delta] \cup [1-\delta,1]) $. Furthermore, similarly as was done in Lemma A.1 from \cite{BuOp}, one can find a smooth map $ h'' : [0,1] \times [0,1] \rightarrow M $ whose image lies in an arbitrarily small neighbourhood of $ h' ([0,1] \times [0,1]) $, such that as before we have $ h''(0,t) = \gamma_0(t) $ and $ h''(1,t) = \gamma_1(t) $ for $ t \in [0,1] $, $ h''(s,t) = \gamma_0(t) = \gamma_1(t) $ for $ (s,t) \in [0,1] \times ([0,\delta] \cup [1-\delta,1]) $, and the image $ h''([0,1] \times (\delta,1-\delta)) $ does not intersect $ \gamma_0([0,\delta] \cup [1-\delta,1]) $, but moreover such that the restriction $ h''|_{[0,1] \times (\delta,1-\delta)} $ is a smooth embedding, and such that for any fixed $t$ the diameter of the curve $s \mapsto h(s,t)$ is less than $2\eps$. Note that by construction, $ h $, $ h' $ and $ h'' $ give the same element of $ \pi_2(M,\gamma_1 \sharp \overline{\gamma_0}) $. Abusing our notation, we will denote $ h'' $ by $ h $ again.
  
  Let $ m $ be a sufficiently large positive integer. Then, for each $ 1 \leqslant i \leqslant m-1 $, the image $h([0,1] \times [\frac{i-1}{m}, \frac{i+1}{m}]) $ has diameter less than $ 2 \eps $, for given $ 0 \leqslant i,j \leqslant m-1 $ we have $h([0,1] \times [\frac{i}{m}, \frac{i+1}{m}])  \cap h([0,1] \times [\frac{j}{m}, \frac{j+1}{m}]) \neq \emptyset $ only if $j \in \{ i-1, i, i+1\}$, and moreover we can find a neighborhood $U_i$ of each $h([0,1] \times [\frac{i}{m}, \frac{i+1}{m}])$ such that $U_i$ is diffeomorphic to a ball,  such that we again have $U_i \cap U_j \neq \emptyset$ only if $j \in \{ i-1, i, i+1\}$, and such that the diameter of $ U_i \cap U_{i+1} $ is less than $2\eps$, for every $ 0 \leqslant i < m $. Moreover, the union $ U  = U_0 \cup \ldots \cup U_m $ can be assumed to be diffeomorphic to a ball, as well as $ U_i \cap U_{i+1} $, for each $ 0 \leqslant i < m $.  Then in particular, $ \omega $ is exact on $ U $, i.e. $ \omega = d \lambda $ on $ U $, for some differential $1$-form $ \lambda $ on $ U $. By our assumptions, $\int_0^1 \gamma_0^* \lambda = \int_0^1 \gamma_1^* \lambda$. 
  

 
 
 \vspace{.2 cm}
 
 \noindent \textbf{Step 1: Mapping points to points.} For each $1 \leq i \leq m$, we pick a Hamiltonian $G_i$ which is supported in $ U_{i-1} \cap U_i $
such that 
$$\phi^1_{G_i}(\gamma_0(t)) = \gamma_1(t), \;\; \forall t  \in [\tfrac{i}{m} - \kappa, \tfrac{i}{m}+ \kappa],$$ 
where $\kappa>0$ is sufficiently small.  In particular, the $G_i$'s have mutually disjoint supports. 
 
 We let $G:= \sum_{i=1}^{m} G_i$ and let $\gamma_0' (t) = \phi^1_{G_i} \circ \gamma_0(t)$. 
We also remark that each $G_i$ can be picked such that $\|G_i\|_{\infty}$ is as small as one wishes. Hence, we may assume that $\|G\|_{\infty} \leq \rho / 3$.

\vspace{.2 cm}

 \noindent \textbf{Step 2: Adjusting the actions.}
  Note that the two curves $\gamma_0|_{[\frac{i}{m}, \frac{i+1}{m}]}$ and  $\gamma_0'|_{[\frac{i}{m}, \frac{i+1}{m}]}$ coincide near their end-points and are both contained in $U_i$.  We would like to find a Hamiltonian diffeomorphism which is supported in $U_i$ and maps $\gamma_0'|_{[\frac{i}{m}, \frac{i+1}{m}]}$ to $\gamma_1|_{[\frac{i}{m}, \frac{i+1}{m}]}$.  However, there is an obstruction to finding such a Hamiltonian diffeomorphism.  These two curves do not necessarily have the same action.   The goal of this step is to modify $\gamma_0'$ to remove this obstruction.

  
  \begin{claim}\label{cl:adjust_action}
  
  There exists a Hamiltonian $H$ with the following properties:
  \begin{enumerate}
  \item The support of $H$ is contained in $ U $, and we have $ \phi_H^t(U_i) \subset U_i $ for every $ t \in [0,1] $ and $ 0 \leqslant i \leqslant m $.
  
  \item The curve $ \gamma_0'' := (\phi_H^1) \circ \gamma_0' $ coincides with $ \gamma_1 $ near $t = \frac{i}{m}$, for $ i =  0,1, \ldots, m $, 
  
  \item $ \gamma_0 ''|_{[\frac{i}{m}, \frac{i+1}{m}]}$ has the same action as $\gamma_1|_{[\frac{i}{m}, \frac{i+1}{m}]}$,  for $ i =  0,1, \ldots, m-1 $,
  
 \item $\|H\|_{\infty} \leq \rho / 3$. 
 
 \item For each $ 0 \leqslant i,j \leqslant m-1 $, the images of $ \gamma_0''|_{(\frac{i}{m},\frac{i+1}{m})} $ and of $ \gamma_0|_{(\frac{j}{m},\frac{j+1}{m})} $ intersect only if $ i = j $.
 
  \end{enumerate}
  \end{claim}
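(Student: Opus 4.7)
The plan is to construct $H$ as a disjoint sum $H = \sum_{i=0}^{m-1} H_i$, where each $H_i$ is compactly supported in a small region inside $U_i$, the supports are pairwise disjoint, and each $H_i$ modifies only the piece $\gamma_0'|_{[i/m,(i+1)/m]}$ (adjusting its action by $a_i$) while leaving the other pieces unchanged. The first task is to measure the obstruction. Since each $U_i$ is diffeomorphic to a ball, $\omega = d\lambda$ on $U$, and because $\gamma_0'$ and $\gamma_1$ coincide near each matching point, the action discrepancies
\[
a_i := \int_{i/m}^{(i+1)/m} \gamma_1^*\lambda - \int_{i/m}^{(i+1)/m} (\gamma_0')^*\lambda
\]
each equal the symplectic area of a disk in $U_i$ bounded by the two pieces (which share endpoints at the matching points). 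Because $\gamma_0'$ and $\gamma_1$ are $\eps$-close in the $C^0$ sense and each piece has length $O(1/m)$, one has $|a_i| = O(\eps/m)$, so by taking $m$ large enough at the outset I may assume $|a_i| \leq \rho/3$ for every $i$. The identity $\sum_i a_i = 0$ follows from the zero-symplectic-area hypothesis on the homotopy together with the fact that the Hamiltonian generating Step 1 vanishes at $\gamma_0(0)$ and $\gamma_0(1)$.

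For each $i$ I would choose a small ball $V_i$ inside the interior of $U_i$, disjoint from the adjacent balls $U_{i-1}$ and $U_{i+1}$, from the image of $\gamma_1$, and from every other piece $\gamma_0|_{[j/m,(j+1)/m]}$ with $j \neq i$; in dimension $\geq 4$ such a placement is unobstructed because the obstructions form a finite collection of codimension-$\geq 3$ submanifolds, leaving plenty of transverse room. Inside $V_i$, following the construction of Lemma A.1 of \cite{BuOp} (adapted from discs to curves), one builds a Hamiltonian $H_i$ compactly supported in $V_i$ such that $\phi_{H_i}^1$ sends $\gamma_0'|_{[i/m,(i+1)/m]}$ to a new curve whose action differs by exactly $a_i$ while fixing the matching points, with sup norm $\|H_i\|_{\infty} = O(|a_i|) \leq \rho/3$.

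Setting $H := \sum_i H_i$, the disjointness of the $V_i$'s yields $\|H\|_\infty = \max_i \|H_i\|_\infty \leq \rho/3$ and $\phi_H^t = \prod_i \phi_{H_i}^t$, so $\phi_H^t$ preserves each $U_i$ (condition 1) and the sup-norm bound (condition 4) holds. Condition 2 is immediate because $H$ vanishes near every matching point. Condition 3 holds piece-by-piece by construction. For condition 5, observe that $\gamma_0''|_{(i/m,(i+1)/m)} \subset U_i$ and $\gamma_0|_{(j/m,(j+1)/m)} \subset U_j$, so intersections require $|i-j| \leq 1$; in the case $|i-j| = 1$, the intersection lies in the small overlap $U_i \cap U_{i\pm 1}$ near a matching point, where $\gamma_0''$ coincides with $\gamma_1$, and the disjointness of $\gamma_0|_{(\delta,1-\delta)}$ from $\gamma_1|_{(\delta,1-\delta)}$ established at the start of the proof rules out any crossing.

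The main obstacle will be the construction of $H_i$ itself with $\|H_i\|_\infty$ as small as $|a_i|$ while still achieving the prescribed action change; this is essentially the content of Lemma A.1 of \cite{BuOp}, whose key ingredient is precisely the codimension-$\geq 2$ transverse freedom provided by $\dim W \geq 4$. The rest of the argument is bookkeeping, the only delicate point being the simultaneous verification of condition 5 alongside the action-adjustment conditions, which is handled by the careful choice of $V_i$ away from the adjacent balls $U_{i\pm 1}$ and all other curve pieces.
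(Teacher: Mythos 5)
Your decomposition $H=\sum_i H_i$ with each $H_i$ compactly supported in a small ball $V_i\subset\mathrm{int}(U_i)$ disjoint from $U_{i\pm1}$ and from $\gamma_1$ cannot achieve the required action adjustment, and this is a fatal gap. If the support of $H_i$ is disjoint from the two matching points $\gamma_0'(\tfrac{i}{m})=\gamma_1(\tfrac{i}{m})$ and $\gamma_0'(\tfrac{i+1}{m})=\gamma_1(\tfrac{i+1}{m})$, then the flow $\phi_{H_i}^s$ fixes those points for \emph{every} $s\in[0,1]$. Writing $\gamma_s=\phi_{H_i}^s\circ\gamma_0'|_{[i/m,(i+1)/m]}$ and using $\mathcal L_{X_{H_i}}\lambda = d\bigl(\lambda(X_{H_i}) - H_i\bigr)$, one gets
\[
\frac{d}{ds}\int \gamma_s^*\lambda \;=\; \bigl(\lambda(X_{H_i})-H_i\bigr)\bigl(\gamma_s(\tfrac{i+1}{m})\bigr)-\bigl(\lambda(X_{H_i})-H_i\bigr)\bigl(\gamma_s(\tfrac{i}{m})\bigr),
\]
and both terms vanish because $H_i$ and $X_{H_i}$ vanish near the matching points. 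Hence the $\lambda$-action of the piece is \emph{invariant} under $\phi_{H_i}^1$, and there is no such $H_i$ changing it by $a_i\neq 0$. (The same obstruction rules out the variant where you first perturb the curve non-Hamiltonianly inside $V_i$ and then try to realize that perturbation by a Hamiltonian supported in $V_i$.)

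This is precisely why the paper's proof does not use a ``parallel'' decomposition supported in the interiors of the $U_i$'s. Instead it proceeds \emph{sequentially}: each $H_i$ is supported in the overlap $U_{i-1}\cap U_i$, which \emph{does} contain the matching point $\gamma_0'(\tfrac{i}{m})$. The time-$1$ map fixes that point (so condition 2 holds), but the flow is allowed to move it during $s\in(0,1)$, and this is exactly what permits the action of the piece to change. The mechanism is a ``shrink-then-expand'' conjugation near $t=\tfrac{i}{m}$ that transfers action from one side of the matching point to the other while keeping $\|H_i\|_\infty$ small; the action discrepancies cascade from piece $1$ through piece $m-1$, and the global constraint $\sum_i a_i=0$ (which you correctly identify) closes the loop at the last piece. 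Finally, condition 5 is not obtained by the geometric argument you sketch (your ``intersections can only happen near matching points'' step is not justified, since $\gamma_0''$ may wander within $U_i\cap U_{i\pm1}$ away from the matching parameter); the paper simply adds one more $C^\infty$-small general-position Hamiltonian $H''$ to enforce it.

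Incidentally your estimate $|a_i|=O(\eps/m)$, with $m$ then ``taken large enough,'' also misreads the set-up: $m$ is already chosen as a function of $\eps$ in the construction of the $U_i$'s, so you cannot send $m\to\infty$ independently. But this is secondary to the action-invariance obstruction above, which makes the proposed $H_i$'s impossible regardless of any bounds.
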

  \begin{proof}
   
 Denote $ \gamma_{0,1}' = \gamma_0' $. We perform $ (m-1) $ steps, where at step $ i $ ($ 1 \leqslant i \leqslant m-1 $) we construct a curve $ \gamma_{0,i+1}' $, and find a Hamiltonian isotopy from $\gamma_{0, i }'$ to $ \gamma_{0, i+1}'$. 
 
 Let us describe the $ i $th step. Let $ \gamma_{0,i}' $ be the curve provided by the previous step. First, perturb the curve $ \gamma_{0,i}' $ in an arbitrarily small neighbourhood of $ t = \frac{i}{m} - \frac{\kappa}{2} $, so that the perturbed curve $ \gamma_{0,i}'' $ satisfies:
 
 \begin{itemize}
 
 \item $ \gamma_{0,i}'' $ coincides with $ \gamma_{0,i}' $ on $ [0, \frac{i}{m} - \frac{3\kappa}{4}] \cup [\frac{i}{m} - \frac{\kappa}{4} ,1] $, 
 
 \item we have $  \gamma_{0,i}''([\frac{i}{m} - \frac{3\kappa}{4} , \frac{i}{m} - \frac{\kappa}{4}]) \subset U_{i-1} \cap U_i $, 
 
 \item the $ \lambda $-actions of 
 the restrictions of $ \gamma_{0,i}'' $ and $ \gamma_1 $ to $ [\frac{i-1}{m},\frac{i}{m}] $ coincide. 
 
\end{itemize} 
 
\noindent Such perturbation can be performed similarly as in the Remark A.13 from \cite{BuOp}. 

Now, we claim that there exists a smooth Hamiltonian function $ H_i $ supported in $ U_{i-1} \cap U_i $, such that $ \gamma_{0,i}'' = (\phi_{H_i}^1) \circ \gamma_{0,i}' $ on $ [0,\frac{i}{m}+\kappa]$, and such that $ \| H_i \|_\infty \leqslant \rho /6 $.  For doing this, roughly speaking, it is sufficient  to isotope (via the Hamiltonian flow) a small segment of $ \gamma_{0,i}' $ so that it coincides with $ \gamma_{0,i}'' $, near $ t = \frac{i}{m} $, or more precisely, for $ t \in [\frac{i}{m} - \frac{3\kappa}{4},  \frac{i}{m} + \kappa] $. And notice, that we are not restricted to keeping the "right end-point" $ \gamma_{0,i}''(\frac{i}{m} + \kappa) $ fixed along the isotopy. Therefore, for keeping the Hofer's norm of the isotopy small, we can just "shrink" the curve  $ \gamma_{0,i}'|_{[\frac{i}{m} - \frac{7\kappa}{8}, \frac{i}{m} + \kappa]} $ to the small segment $ \gamma_{0,i}'|_{[\frac{i}{m} - \frac{7\kappa}{8}, \frac{i}{m} - \frac{3\kappa}{4}]}  = \gamma_{0,i}''|_{[\frac{i}{m} - \frac{7\kappa}{8}, \frac{i}{m} - \frac{3\kappa}{4}]}$ near the left end-point, and then "expand this segment" to coincide with  $ \gamma_{0,i}''|_{[\frac{i}{m} - \frac{7\kappa}{8}, \frac{i}{m} + \kappa] }$.

For a more precise explanation, denote $ a = \frac{i}{m} - \frac{7\kappa}{8} $, $ b = \frac{i}{m} + \kappa $, choose a smooth function $ c : [a,b] \rightarrow [a, a + \frac{\kappa}{8}] $ such that $ c(t) = t $ for $ t \in [a,a+\frac{\kappa}{16}] $ and $ c'(t) > 0 $ on $ [a,b] $, and consider families of curves $ \alpha_s, \beta_s : [a,b] \rightarrow M $, $ s \in [0,1] $, where $ \alpha_s(t) = \gamma_{0,i}'(st + (1-s)c(t)) $, $ \beta_s(t) = \gamma_{0,i}''(st + (1-s)c(t)) $. Note that $ \alpha_0 \equiv \beta_0$. It is easy to see that one can find Hamiltonian functions $ H_i' $, $ H_i'' $, supported in arbitrarily small neighbourhood of $ \gamma_{0,i}'([a+  \frac{\kappa}{8},b]) $ and $ \gamma_{0,i}''([a+  \frac{\kappa}{8},b]) $ respectively, such that $ \alpha_s = \phi_{H_i'}^s \circ \alpha_0 $ and $ \beta_s = \phi_{H_i''}^s \circ \beta_0 $ for each $ s \in [0,1] $, and such that $ \| H_i' \|_{\infty}, \| H_i'' \|_{\infty} \leqslant \rho / 12 $. Now let $ H_i $ be the Hamiltonian function of the Hamiltonian flow $ (\phi_{H_i''}^s \circ (\phi_{H_i'}^s)^{-1})_{s \in [0,1]} $.
The function $ H_i $ is supported in $ U_{i-1} \cap U_i $, satisfies $ \| H_i \|_{\infty} \leqslant \rho / 6 $, and $ \gamma_{0,i}'' = (\phi_{H_i}^1) \circ \gamma_{0,i}' $ on $ [0,\frac{i}{m}+\kappa]$. Now define the curve $ \gamma_{0,i+1}' : [0,1] \rightarrow M $ by $ \gamma_{0,i+1}' := \phi_{H_i}^1 \circ \gamma_{0,i}' $. 

After performing all the $ (m-1) $ steps, the $ \lambda $-actions of $ \gamma_{0,m-1}' $ and $ \gamma_1 $ on $ [\frac{i-1}{m},\frac{i}{m}] $ coincide for any $ 1 \leqslant i \leqslant m-1 $. But since the actions of $ \gamma_{0,m-1}' $ and $ \gamma_1 $ coincide on the whole $ [0,1] $, it also follows that the actions of $ \gamma_{0,m-1}' $ and $ \gamma_1 $ coincide on $ [\frac{m-1}{m},1] $. Note that since all $ H_i $ have disjoint supports (since the support of $ H_i $ is contained in $ U_{i-1} \cap U_i $), if we denote $ H' = H_1 + \ldots + H_{m-1} $, then $ \| H' \|_{\infty} \leqslant \rho / 6 $ and $  \gamma_{0,m-1}'  = (\phi^{1}_{H'}) \circ \gamma_{0,1}' = (\phi^1_{H'}) \circ \gamma_0' $. 

It is possible that for different $ 0 \leqslant i,j \leqslant m-1 $, the images of $ \gamma_{0,m-1}' |_{(\frac{i}{m},\frac{i+1}{m})} $ and of $ \gamma_1|_{(\frac{j}{m},\frac{j+1}{m})} $ intersect. But then, one can easily find a $ C^\infty $-small Hamiltonian function $ H'' $, supported inside an arbitrarily small neighbourhood of $ \bigcup_{i=0}^{m-1} \gamma_1([\frac{i}{m} + \frac{\kappa}{5}, \frac{i+1}{m} - \frac{\kappa}{5}]) $, such that in particular we have $ \| H'' \|_\infty \leqslant \rho / 6 $, such that $ \gamma_0'' := \phi^1_{H''} \circ \gamma_{0,m-1}' $ satisfies the property $ 5 $ from the statement of the Claim, and moreover such that the Hamiltonian function $ H := H'' \sharp H' $ that generates the flow $ (\phi^t_{H''} \circ \phi^t_{H'}) $ satisfies the property $ 1 $ from the statement of the Claim.

\end{proof}
  
  \vspace{.2 cm}
  
\noindent \textbf{ Step 3.  Mapping $ \gamma_0 ''|_{[\frac{i}{m}, \frac{i+1}{m}]}$ to $ \gamma_1|_{[\frac{i}{m}, \frac{i+1}{m}]}$. }

  \begin{claim}\label{cl:nolabel}
  There exist Hamiltonians $K_i$ such that 
  \begin{enumerate}
  \item $K_i$ is supported in $U_i$, 
  \item the support of $K_i$ intersects the images of $\gamma_0''|_{[\frac{j}{m}, \frac{j+1}{m}]}$ and of $\gamma_1|_{[\frac{j}{m}, \frac{j+1}{m}]}$ only for $j = i$,
  \item $\|K_i \| \leq \rho / 6 $,
  \item $\phi^1_{K_i} \circ \gamma_0''|_{[\frac{i}{m}, \frac{i+1}{m}]} = \gamma_1|_{[\frac{i}{m}, \frac{i+1}{m}]}$.
  \end{enumerate}
  \end{claim}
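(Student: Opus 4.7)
The plan is to construct $K_i$ as the generator of a Hamiltonian isotopy supported in a thin tubular neighborhood of an embedded disk in $U_i$ bounding the two arcs. Since $U_i$ is diffeomorphic to a ball, $\omega$ is exact there, say $\omega|_{U_i}=d\lambda_i$. By Claim \ref{cl:adjust_action}(3), the arcs $\gamma_0''|_{[\frac{i}{m},\frac{i+1}{m}]}$ and $\gamma_1|_{[\frac{i}{m},\frac{i+1}{m}]}$ have equal $\lambda_i$-action; by Claim \ref{cl:adjust_action}(5) and the fact that both arcs agree with each other near $t=\frac{i}{m},\frac{i+1}{m}$, their concatenation is a piecewise smooth embedded loop $\Gamma_i\subset U_i$. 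Stokes' theorem, applied to any surface in $U_i$ bounding $\Gamma_i$, forces the symplectic area of such a surface to vanish.

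First I would build an embedded 2-disk $\Sigma_i\subset U_i$ bounding $\Gamma_i$ of zero $\omega$-area. In ambient dimension $\geq 4$, a generic perturbation of any immersed bounding disk yields an embedded one (transversality), and, as just observed, vanishing of the symplectic area is automatic. I would then parametrize $\Sigma_i$ as an isotopy $\{\gamma_s\}_{s\in[0,1]}$, rel endpoints, from $\gamma_0''|_{[\frac{i}{m},\frac{i+1}{m}]}$ to $\gamma_1|_{[\frac{i}{m},\frac{i+1}{m}]}$ through embedded arcs in $U_i$. Next, I would extend this 1-parameter family of embeddings to a smooth ambient isotopy of $U_i$ supported in an arbitrarily small tubular neighborhood $V$ of $\Sigma_i$, and then apply a Moser/symplectic-isotopy-extension argument to convert it into a Hamiltonian isotopy $\phi_{K_i}^s$ with $\phi_{K_i}^1\circ\gamma_0''|_{[\frac{i}{m},\frac{i+1}{m}]}=\gamma_1|_{[\frac{i}{m},\frac{i+1}{m}]}$, supported in $V$. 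This yields properties (1) and (4). For property (2), I note that by Claim \ref{cl:adjust_action}(5) together with the $C^0$-coincidence of the two arcs near their endpoints, the set $\gamma_0''([\frac{i}{m},\frac{i+1}{m}])\cup\gamma_1([\frac{i}{m},\frac{i+1}{m}])$ has positive distance from $\gamma_0''([\frac{j}{m},\frac{j+1}{m}])\cup\gamma_1([\frac{j}{m},\frac{j+1}{m}])$ for every $j\neq i$; shrinking $V$ further below this distance gives the required disjointness.

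The step I expect to be the main obstacle is property (3), the Hofer bound $\|K_i\|_\infty\leq\rho/6$. The crucial point is to leverage the \emph{vanishing} (not merely smallness) of the symplectic area of $\Sigma_i$: this is exactly the obstruction that must disappear in order for the Moser-type construction above to produce a $C^0$-small generating Hamiltonian, after which one shrinks the tubular neighborhood $V$ around $\Sigma_i$ and rescales accordingly to absorb any remaining constants. This quantitative step is analogous in spirit to the action-adjustment performed in Claim \ref{cl:adjust_action} and to the quantitative $h$-principle for discs from \cite{BuOp}; once the zero-area bounding disk has been located, the remainder is a careful but standard Moser calculation, and summing the $K_i$'s (with disjoint supports once $V$ is thin enough) will complete Step 3 of the proof of Proposition \ref{prop:h_principle}.
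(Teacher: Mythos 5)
Your proposal correctly identifies that an embedded zero-area disk exists (the action matching from Claim~\ref{cl:adjust_action} makes the total area vanish, and dimension $\geq 4$ gives embeddedness), and the Moser/tubular-neighborhood step gives you properties (1), (2), (4) with arbitrarily small support. But the argument for property (3), the Hofer bound, has a genuine gap, and it is exactly the gap the paper flags when it says that a direct application of Lemma~A.3(a) of~\cite{BuOp} ``would not have obtained the estimate on $\|\cdot\|_\infty$.''

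The flaw is the claim that vanishing of the \emph{total} symplectic area of $\Sigma_i$, combined with shrinking the tubular neighborhood, suffices to make $\|K_i\|_\infty$ small. Along the isotopy $\gamma_s$ sweeping out $\Sigma_i$, the generating Hamiltonian is pinned down on the swept-out surface, up to a function of $s$: differentiating $K_i(s,\gamma_s(t))$ in $t$ gives $\pm\,\omega(\partial_t\gamma_s,\partial_s\gamma_s)$, so $K_i(s,\gamma_s(t))$ equals (up to normalization) the \emph{partial} area $\int_{[0,s]\times[0,t]}\Gamma^*\omega$. The sup-norm of $K_i$ is therefore bounded below by the oscillation of these partial areas, which is not controlled by the single constraint that the \emph{total} area vanish. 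A zero-area disk whose positive and negative parts are both large forces any Hamiltonian realizing the corresponding sweep to have large Hofer norm, and shrinking the transverse thickness of the tube does nothing to change the values of $K_i$ on the sweep itself. In short: zero total area is necessary for the existence of a compactly supported $K_i$ in $U_i$, but it is not sufficient for smallness of $\|K_i\|_\infty$, and your ``rescale to absorb the constants'' step does not exist.

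The paper's proof resolves this with a conjugation trick that your proposal does not contain: first move $\gamma_0''$ to a $C^\infty$-close curve $\gamma_1'$ (small Hofer norm for trivial reasons), where $\gamma_1'$ is chosen so that it is the image of $\gamma_1$ under a $C^\infty$-small Hamiltonian $K_i''$; then find a symplectomorphism $\psi$ (generated by a Hamiltonian whose Hofer norm may be \emph{large}) taking the pair $(\gamma_0'',\gamma_1')$ to $(\gamma_0'',\gamma_1)$; finally take $K_i$ to generate $(\psi^{-1}\circ(\phi_{K_i''}^s)^{-1}\circ\psi\circ\phi_{K_i'}^s)_s$. Since conjugation preserves Hofer norm, $\|K_i\|_\infty\leq\|K_i'\|_\infty+\|K_i''\|_\infty$, with no contribution from $\psi$. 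This decomposition, or an equivalent device that explicitly controls the intermediate actions along the isotopy, is the idea your argument is missing.
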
 
  \begin{proof}
 
Before passing to the proof, let us remark that in general one can apply directly Lemma A.3 (a) of \cite{BuOp} to our situation, but we would not have obtained the estimate on $\|\cdot\|_\infty$. Therefore the proof is more subtle. Let us roughly explain the steps of the proof. The first step is to make a very small ($ C^\infty $) perturbation of the curve $ \gamma_0'' $ (via the Hamiltonian diffeomorphism $ \phi_{K_i'}^1 $ below), in order to put the curves $ \gamma_0'' |_{[\frac{i}{m}, \frac{i+1}{m}]}$ and $ \gamma_1 |_{[\frac{i}{m}, \frac{i+1}{m}]}$ in general position. Then, we use the following idea. If two curves which coincide near the endpoints and with equal actions, are $ C^\infty $-close, then clearly one can find a very small (with respect to the Hofer's norm) Hamiltonian 
function that moves the first curve to the second. However, if such curves are not $ C^\infty $-close, then we can use a ``conjugation trick'': Instead of moving the first curve to the second via a Hamiltonian diffeomorphism, we find a third curve which is $ C^\infty $-close to the first curve, and such that the pair ``(first curve, second curve)'' could be mapped to the pair ``(first curve, third curve)'' via a symplectomorphism. After that, it is clearly enough to move the first curve to the third curve by a Hamiltonian flow with a very small Hofer's norm, and then conjugate the flow with the symplectomorphism. The details of this are carried out below.
 
 The restrictions $ \gamma_0 ''|_{[\frac{i}{m}, \frac{i+1}{m}]}$ and $ \gamma_1|_{[\frac{i}{m}, \frac{i+1}{m}]}$ both lie in $ U_i $, have the same $ \lambda $-actions, and coincide near the endpoints. Let $ \kappa > 0 $ be such that $ \gamma_0''(t) = \gamma_1(t) $ for $ t \in [\frac{i}{m}, \frac{i}{m} + \kappa] \cup [\frac{i+1}{m}-\kappa,\frac{i+1}{m}] $ (we use the old notation $ \kappa $ in a new situation, and in fact it is enough to replace the old $ \kappa $ by $ \frac{\kappa}{5} $). One can slightly perturb $ \gamma_0'' $ via a Hamiltonian diffeomorphism $ \phi_{K_i'}^1 $ generated by a $ C^\infty $-small Hamiltonian function $ K_i' $, so that $ \gamma_{01}'' :=   \phi_{K_i'}^1 \circ \gamma_0'' $ satisfies  $ \gamma_{01}''(t) = \gamma_1(t) $ for $ t \in [\frac{i}{m}, \frac{i}{m} + \kappa] \cup [\frac{i+1}{m}-\kappa,\frac{i+1}{m}] $, and moreover $ \gamma_{01}''((\frac{i}{m} + \kappa,\frac{i+1}{m}-\kappa)) \cap \gamma_1((\frac{i}{m} + \kappa,\frac{i+1}{m}-\kappa)) = \emptyset $. We may assume that $ K_i' $ is supported in $ U_i $, the support of $K_i' $ intersects the image of $\gamma_0''|_{[\frac{j}{m}, \frac{j+1}{m}]}$ and of $\gamma_1|_{[\frac{j}{m}, \frac{j+1}{m}]}$ only for $j = i$, and that $ \| K_i' \|_{\infty} \leqslant \rho / 12 $.
 
Now, one can clearly find a $ C^\infty $-small Hamiltonian function $ K_i'' $ supported in $ U_i $, so that the support of $K_i'' $ intersects the image of $\gamma_0''|_{[\frac{j}{m}, \frac{j+1}{m}]}$ and of $\gamma_1|_{[\frac{j}{m}, \frac{j+1}{m}]}$ only for $j = i$, and such that the curve $ \gamma_1' := (\phi_{K_i''}^1) \circ \gamma_1 $ coincides with $ \gamma_{01}'' $ on $ [\frac{i}{m}, \frac{i}{m} + \kappa'] \cup [\frac{i+1}{m}-\kappa',\frac{i+1}{m}] $, and moreover $ \gamma_1'((\frac{i}{m} + \kappa',\frac{i+1}{m}-\kappa')) \cap \gamma_1((\frac{i}{m} + \kappa',\frac{i+1}{m}-\kappa')) = \emptyset $, for some $ \kappa' > \kappa $. We may assume that $ \| K_i'' \| \leqslant \rho / 12 $.

Denote $ \kappa'' = (\kappa + \kappa') / 2 $. The curves $ \gamma_{01}''|_{[\frac{i}{m} + \kappa'',\frac{i+1}{m}-\kappa'']} $ and $ \gamma_1'|_{[\frac{i}{m} + \kappa'',\frac{i+1}{m}-\kappa'']} $ lie in $ U_i $, coincide near the endpoints, have the same $ \lambda $-action, and their images do not intersect $ \gamma_1 ([0,1]) $. By Lemma A.3 (a) of \cite{BuOp}, there exists a Hamiltonian function $ K_i''' $ supported in $ U_i $ and away from the endpoints $ \gamma_{01}''(\frac{i}{m} + \kappa'') $, $ \gamma_{01}''(\frac{i+1}{m}-\kappa'') $, such that $ \gamma_1' = \phi_{K_i'''}^1 \circ \gamma_{01}'' $ on $ [\frac{i}{m} + \kappa'',\frac{i+1}{m}-\kappa''] $. By a general position argument and a cut-off argument, we may further assume that the support of $ K_i''' $ does not intersect $ \gamma_1([0,1]) $, as well as $\gamma_0'' ([\frac{j}{m}, \frac{j+1}{m}])$ for $j \neq i$. Denote $ \psi := \phi_{K_i'''}^1 $.

To finish the proof, let $ K_i $ be the Hamiltonian function that generates the flow $ (\psi^{-1} \circ (\phi_{K_i''}^s)^{-1} \circ \psi \circ \phi_{K_i'}^s)_{s \in [0,1]} $. Then $ K_i $ has all the desired properties. For instance, for the property 4 of the statement of the Claim, we first of all have $ \gamma_1 = \psi^{-1} \circ \gamma_1 $ since the support of $ \psi $ does not intersect the curve $ \gamma_1 $, and then we get $$ \gamma_1 = \psi^{-1} \circ \gamma_1 = \psi^{-1} \circ (\phi^1_{K_i''})^{-1} \circ \gamma_1' = \psi^{-1} \circ (\phi^1_{K_i''})^{-1} \circ \psi \circ \gamma_{01}'' $$ $$ = 
\psi^{-1} \circ (\phi^1_{K_i''})^{-1} \circ \psi \circ \phi^1_{K_i'} \circ \gamma_{0}'' =  \phi^1_{K_i} \circ \gamma_0'' $$ on $ [\frac{i}{m}, \frac{i+1}{m}] $.

%

\end{proof}

Let $K_{\mathrm{odd}} = K_1 + K_3 + \cdots $ and $K_{\mathrm{even}} = K_2 + K_4 + \cdots,$ where $K_i$'s are provided to us by the above claim.  We let $K$ be a Hamiltonian such that $\phi^s_K = \phi^s_{K_{\mathrm{odd}}} \circ \phi^s_{K_{\mathrm{even}}}$.  

One can deduce the following facts, without much difficulty, from the above claim: 

\begin{enumerate}
\item $ \phi^s_K(U_i) \subset U_{i-1} \cup U_i \cup U_{i+1} $ for each $ 0 \leqslant i \leqslant m $ and $ s \in [0,1] $ (where we put $ U_{-1} = U_{m+1} = \emptyset $),
\item $\phi^1_K \circ \gamma_0'' = \gamma_1$, 
\item $\|K\|_{\infty} \leq \rho / 3 $.
\end{enumerate}

\medskip

We now let $F$ be a Hamiltonian such that $\phi^s_F = \phi^s_K \circ \phi^s_H \circ \phi^s_G$.  Examining the properties of $K, H,$ and $G$ we see that 

\begin{enumerate}
\item $ F $ vanishes near the extremities of $\gamma_0$ (and hence $\gamma_1$), in particular $\phi^s_F$ fixes the extremities,
\item $\phi^1_F \circ \gamma_0 = \gamma_1$, 
\item $d_{C^0}(\phi^s_F, Id) < 2 \eps$ for each $s \in [0,1]$, and $\|F\|_{\infty} \leq \rho$,
\item $F$ is supported in a $ 2 \eps$--neighborhood of $\gamma_0$.
\end{enumerate}
\end{proof}



\section{Proof of Theorem \ref{theo:main}}

The most important step towards the proof of Theorem \ref{theo:main} will be to establish the following result which is a refined version of Theorem \ref{theo:invariant_tre}.  Throughout this section $(M, \omega)$ will denote a closed and connected symplectic manifold whose dimension is at least four.

\begin{theo}\label{theo:invariant_tree_2}
Let $H$ be a Morse function on $M$.  If $H$ is sufficiently $C^2$--small, then for every $\eps,\rho>0$, there exists $\psi\in\mathrm{Hameo}(M,\omega)$ and an embedded tree $T$ (see Definition \ref{def:tree}) such that:
\begin{itemize}
\item $T$ is $\psi$-invariant, i.e. $\psi(T)=T$,
\item $T$ contains all the fixed points of $\psi$,
\item $d_{C^0}(\phi_H^1,\psi)<\eps$ and $\psi\circ\phi_H^{-1}$ is generated by a continuous Hamiltonian $F$ such that $\|F\|_{\infty}<\rho$,
\item $\psi$ coincides with a Hamiltonian diffeomorphism in the complement of any neighborhood of $T$.
\end{itemize}
\end{theo}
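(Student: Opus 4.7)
The plan is to reduce to the single-edge statement Theorem \ref{theo:invariant_curve}, which may be invoked to make one embedded arc invariant at a time. Accordingly the argument splits into choosing the tree and then modifying $\phi_H^1$ edge by edge on essentially disjoint neighborhoods.

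Setup. Since $H$ is $C^2$-small and Morse, the critical points $p_1,\ldots, p_N$ of $H$ coincide with the fixed points of $\phi_H^1$, and $\phi_H^1$ is $C^1$-close to the identity. Choose embedded arcs $e_1,\ldots, e_{N-1}$ joining the $p_i$ so that $T := \bigcup_{j} e_j$ is an embedded tree with vertex set $\{p_1,\ldots,p_N\}$; because $\dim M \geqslant 4$, such arcs can be arranged pairwise disjoint away from their common endpoints. Select tubular neighborhoods $U_j \supset e_j$ which are pairwise disjoint outside arbitrarily small balls around the vertices, and thin enough that $\phi_H^1$ moves each $e_j$ only slightly inside $U_j$.

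Edge-by-edge modification. For each $j$, apply Theorem \ref{theo:invariant_curve} inside $U_j$, with tolerances chosen to produce a continuous Hamiltonian $F_j$ supported in $U_j$, vanishing near the endpoints of $e_j$, such that $\phi_{F_j}^1 \circ \phi_H^1$ preserves $e_j$, has no fixed points in a neighborhood of $e_j$ other than its two endpoints, satisfies $d_{C^0}(\phi_{F_j}^1,\mathrm{Id})<\eps$, and has $\|F_j\|_\infty < \rho/N$. Since the supports of the $F_j$ are essentially disjoint, put $F := F_1 \# \cdots \# F_{N-1}$ and $\psi := \phi_F^1 \circ \phi_H^1$. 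Then $\psi$ preserves each $e_j$, hence $T$; the fixed points of $\psi$ all lie on $T$ (off every $U_j$ one has $\psi = \phi_H^1$ whose fixed points are $\mathrm{Crit}(H) \subset T$, while inside each $U_j$ the edge modification was arranged not to introduce new fixed points); using that the supports of the $F_j$ are disjoint outside the vertex balls, one gets $\|F\|_\infty < \rho$ and $d_{C^0}(\psi,\phi_H^1) = d_{C^0}(\phi_F^1,\mathrm{Id}) < \eps$; and off any neighborhood of $T$ the Hamiltonian $F$ vanishes, so $\psi = \phi_H^1$ is a smooth Hamiltonian diffeomorphism there.

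The main obstacle sits entirely inside Theorem \ref{theo:invariant_curve}: constructing the local $F_j$ with the required $C^0$ and Hofer estimates, and crucially without introducing spurious fixed points, is the technical heart of the paper and is where the quantitative $h$-principle of Proposition \ref{prop:h_principle} gets used. Granted that input, the theorem above is a gluing argument whose only remaining subtlety is arranging the arcs $e_j$ and tubes $U_j$ so that the edge-by-edge modifications genuinely commute outside small vertex neighborhoods, a point that can be handled by shrinking $U_j$ near each $p_i$ and exploiting that each $F_j$ vanishes there by construction.
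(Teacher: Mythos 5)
Your high-level plan---reduce to a single-edge statement (Theorem \ref{theo:invariant_curve}) applied edge by edge, then glue---is indeed the skeleton of the paper's argument, but several steps of your proposed reduction are wrong in ways that would make it fail if carried out literally.

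First, you cannot choose arbitrary embedded arcs $e_1,\dots,e_{N-1}$ joining the critical points. Theorem \ref{theo:invariant_curve} requires the connecting curve $\gamma$ to satisfy $\frac{d}{dt}(H\circ\gamma(t))<0$, i.e.\ $H$ must be strictly monotone along $\gamma$, and moreover the interior $\gamma((0,1))$ must live inside a single Reeb-edge region $U_{p,q}=\m C(\gamma((0,1)))$. This forces the pair $(p,q)$ to be adjacent in the Reeb graph of $H$. The paper therefore starts by choosing a spanning tree $\m T$ of the Reeb graph $\m R$ and takes the arcs to be $H$-monotone curves realizing its edges. An arbitrary tree on $\Crit(H)$ does not satisfy this, and the single-edge theorem will not apply.

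Second, the supports of the modifications are nothing like small tubular neighborhoods of the chosen arcs, and the $F_j$ do not vanish near the vertices. The continuous Hamiltonian $F_j$ produced by Theorem \ref{theo:invariant_curve} is supported in $\{p\}\cup V_p\cup \m C(\gamma((0,1)))\cup V_q\cup\{q\}$: the set $\m C(\gamma((0,1)))$ is an $H$-saturated region (a union of level-set components, essentially all of $U_{p,q}$), and, crucially, the support \emph{includes} the endpoints. Indeed, the invariant curve produced is a non-compact curve $\alpha:\R\to M$ converging to $p$ and $q$ at the ends, the perturbation is applied along its entire length, and the only place where $\theta_j$ may fail to be smooth is precisely at the vertices (this is what property $7$ of Theorem \ref{theo:invariant_curve} says). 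So the ``commutation outside small vertex balls by vanishing of $F_j$'' mechanism you invoke is the opposite of what happens; disjointness of the perturbations near a shared vertex $p$ is obtained in the paper by first putting $H$ in a hyperbolic Darboux normal form near non-extremal critical points (Lemma \ref{lemma:initial-morse-function}), then assigning to each incident edge a distinct stable/unstable orbit converging to $p$ and choosing mutually disjoint open sets $V_p(\beta)$ around those orbit pieces.

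Third, and related, the invariant tree $T$ in the statement is not the union of your initial arcs $e_j$; it is the union of the images of the new curves $\alpha_j$ produced by Theorem \ref{theo:invariant_curve}, which are built out of pieces of actual $\phi_H^t$-orbits (plus small corrections obtained via the quantitative $h$-principle and Poincar\'e recurrence) and need not stay in a thin tube around $e_j$.

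Finally, your reduction skips the preliminary perturbation of $H$ (Lemma \ref{lemma:initial-morse-function}), which is required both to separate critical values (so that $\m R$ is a graph in the usual sense) and to obtain the local normal forms at critical points used in the single-edge construction; Theorem \ref{theo:invariant_curve} is only stated for Morse functions satisfying these extra conditions. Once the spanning tree of the Reeb graph, the $H$-monotone realizations of its edges, the orbit-based vertex data, and the $H$-saturated support regions replace your arbitrary arcs and thin tubes, the edge-by-edge gluing goes through essentially as you envisage.
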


The notion of an \emph{embedded tree} which appears in the above theorem is defined as follows.

\begin{definition}\label{def:tree}
We will say that a compact subset $T$ of a smooth manifold $M$ is \emph{an embedded tree} if there exists a finite tree $T_0$ and an injective continuous map $\chi:T_0\to M$, such that $T=\chi(T_0)$ and the map $\chi$ is a smooth embedding of the interior of each edge of $T_0$. 
\end{definition}

Note that we do not ask more than continuity at the vertices of $T_0$. Note also that the restriction of $\chi$ to any compact interval included in the interior of any edge is a smooth embedding.

In Section \ref{sec:collapsing_tree} below, we explain why Theorem \ref{theo:invariant_tree_2} implies Theorem \ref{theo:main}. The proof of Theorem \ref{theo:invariant_tree_2} then occupies Sections \ref{sec:invariant_tree} and \ref{sec:invariant_curve}.

\subsection{From an invariant tree to a single fixed point}\label{sec:collapsing_tree}

In this section, we explain how one can build a Hamiltonian homeomorphism with a single fixed point from a Hamiltonian homeomorphism preserving an embedded tree (see Definition \ref{def:tree}) that contains all of its fixed points; in other terms, we prove that Theorem \ref{theo:invariant_tree_2} implies Theorem \ref{theo:main}. This will rely on the following proposition.

\begin{prop}\label{prop:from-tree-to-point} Let $\psi$ be a symplectic homeomorphism of $(M,\omega)$, and $T\subset M$ be an embedded tree which is invariant under $\psi$, that is $\psi(T)=T$. Assume that all the fixed points of $\psi$ are contained in $T$. Then, there exists  $f \in \Sympeo(M, \omega)$, with only one fixed point $p$, and such that $f\circ\psi^{-1}\in\Hameo(M,\omega)$. 

Moreover, if $\psi$ is smooth on $M\setminus T$, then $f$ can be chosen to be smooth on $M\setminus\{p\}$. If $\psi$ coincides with a Hamiltonian diffeomorphism in the complement of any neighborhood of $T$, then $f$ can be chosen in any normal subgroup of $\mathrm{Sympeo}(M,\omega)$ which contains $\Ham(M,\omega)$.
\end{prop}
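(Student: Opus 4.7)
Following the outline given in the introduction, I fix a point $p \in T$ (say, a leaf) and aim to construct a sequence $\phi_i \in \Ham(M,\omega)$ that $C^0$-converges to a continuous map $\phi \co M \to M$ satisfying $\phi(T) = \{p\}$ while $\phi|_{M\setminus T} \co M \setminus T \to M \setminus \{p\}$ is a symplectic diffeomorphism. Once this is in place, set $f(p) = p$ and $f(x) = \phi\psi\phi^{-1}(x)$ for $x \neq p$. On $M\setminus\{p\}$, $f$ is conjugate via the diffeomorphism $\phi|_{M\setminus T}$ to $\psi|_{M\setminus T}$, which is fixed-point free by hypothesis; hence $p$ is the unique fixed point of $f$. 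The smoothness refinement is then automatic: if $\psi$ is smooth on $M\setminus T$, the same conjugation formula makes $f$ smooth on $M\setminus\{p\}$.

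\textbf{Building the collapsing sequence.} Since $T$ has finitely many edges, I orient it with $p$ at the root and process edges from the leaves toward $p$. Each smooth edge admits a Weinstein-type tubular neighborhood (this is where the hypothesis $\dim M \geq 4$ enters, ensuring enough normal directions for a standard symplectic model). Inside such a neighborhood one writes a compactly supported Hamiltonian isotopy of prescribed Hofer norm $\rho_i$ that compresses the edge into an arbitrarily small ball around its root-side endpoint. Composing these edge-contractions and iterating with $\rho_i$ chosen summable yields $\phi_i \in \Ham(M,\omega)$ for which $\diam(\phi_i(T)) \to 0$ at $p$; the limit $\phi$ has the required properties, while $\phi_i^{-1}$ converges locally uniformly on $M\setminus\{p\}$ to $\phi^{-1}$ but has unbounded local distortion near $p$.

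\textbf{Uniform convergence of the conjugates.} The main technical point is that, despite the divergence of $\phi_i^{-1}$, the sequence $\phi_i\psi\phi_i^{-1}$ converges uniformly on all of $M$; this is where the invariance $\psi(T) = T$ is decisive. Away from $p$ uniform convergence follows from local uniform convergence of $\phi_i^{-1}$ and continuity of $\psi$. Near $p$, given $\eta > 0$, the uniform continuity of $\psi$ together with $\psi(T)=T$ and compactness of $T$ yields $\delta > 0$ such that $\psi$ sends the $\delta$-neighborhood of $T$ into its $(\eta/2)$-neighborhood; for $i$ large enough $\phi_i^{-1}$ maps a small ball around $p$ into the $\delta$-neighborhood of $T$, while $\phi_i$ maps the $(\eta/2)$-neighborhood of $T$ into the $\eta$-ball around $p$. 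Composing the three maps gives uniform control, so $f = \lim \phi_i\psi\phi_i^{-1} \in \Sympeo(M,\omega)$.

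\textbf{The Hameo statement and normal-subgroup refinement.} To see $f\circ\psi^{-1} \in \Hameo(M,\omega)$, note that $[\phi_i,\psi] = \phi_i\psi\phi_i^{-1}\psi^{-1}$ converges uniformly to $f\circ\psi^{-1}$; each commutator lies in $\Hameo$ by normality of $\Hameo$ in $\Sympeo$ (since $\phi_i \in \Ham \subset \Hameo$). To upgrade this to a continuous Hamiltonian limit, write $\phi_i = \phi_{F_i}^1$ with smooth $F_i$ chosen so that successive Hamiltonians differ by summable sup norm; the flow $t \mapsto \phi_{F_i}^t \psi\phi_{F_i}^{-t}\psi^{-1}$ is then generated by the continuous Hamiltonian $F_i(t,x) - F_i(t, \psi^{-1}\phi_{F_i}^{-t}(x))$, and one checks that these generators form a uniformly Cauchy sequence whose limit is a continuous Hamiltonian generating $f\circ\psi^{-1}$. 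For the final property, when $\psi$ coincides outside every neighborhood $V$ of $T$ with some $h_V \in \Ham(M,\omega)$, the piece $f \circ (\phi h_V \phi^{-1})^{-1}$ is approximated by commutators $[\phi_i, \psi h_V^{-1}]$ supported in a shrinking neighborhood of $p$, which lie in $\Hameo \subset \mathcal{H}$; combined with $\phi h_V \phi^{-1}$ being a Hamiltonian diffeomorphism of $M$ (which one arranges by choosing $h_V$ concentrated near $T$), normality of $\mathcal{H}$ yields $f \in \mathcal{H}$. The principal obstacle throughout is the third step: reconciling the unbounded local stretching of $\phi_i^{-1}$ near $p$ with the tree invariance of $\psi$ in a genuinely quantitative way.
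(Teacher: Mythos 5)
Your overall strategy, and your treatment of the first two refinements, match the paper's proof closely: you build a sequence $\phi_i \in \Ham(M,\omega)$ contracting $T$ toward $p$ with summable Hofer norms, obtain the collapsing map $\phi$ as the $C^0$-limit, set $f = \phi\psi\phi^{-1}$ on $M\setminus\{p\}$, prove uniform convergence of the conjugates using $\psi(T)=T$ and uniform continuity, and identify $f\circ\psi^{-1}$ as a $\Hameo$ element via convergence of generating Hamiltonians. (A small computational slip: the generator of $t\mapsto \phi_{F_i}^t\psi(\phi_{F_i}^t)^{-1}$ is $F_i(t,x) - F_i\bigl(t,\phi_{F_i}^t\psi^{-1}(\phi_{F_i}^t)^{-1}(x)\bigr)$, not $F_i(t,x)-F_i(t,\psi^{-1}\phi_{F_i}^{-t}(x))$; also, you gesture at "summable sup norm" but do not spell out the time-reparametrization concatenating the $h_k$'s into dyadic subintervals, which is what actually makes the $C^0$-Cauchy estimate on the $F_i$'s go through.)

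The genuine gap is in your normal-subgroup refinement, which does not work as written. Three problems. First, $\phi h_V\phi^{-1}$ is not a well-defined self-map of $M$: $\phi$ collapses $T$ to $p$, so $\phi^{-1}$ is only defined on $M\setminus\{p\}$, and there is no reason for $\phi h_V\phi^{-1}$ to extend to a Hamiltonian diffeomorphism at $p$ -- nor does "choosing $h_V$ concentrated near $T$" make sense, since $h_V$ is a diffeomorphism agreeing with $\psi$ \emph{outside} a neighborhood of $T$, not one supported near $T$. Second, the commutators $[\phi_i,\psi h_V^{-1}]$ do not converge to $f\circ(\phi h_V\phi^{-1})^{-1}$; if you track the limit you get something like $h_V\psi^{-1}$ instead. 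Third, and most importantly, even correct convergence would not help: $\Hameo$ and a general normal subgroup $\mathcal{H}$ are \emph{not} $C^0$-closed, so "a sequence in $\mathcal{H}$ converges uniformly to $f$" does not yield $f\in\mathcal{H}$. The paper handles this point entirely differently: it observes that for large $i$ one may write $f=h\circ g$ with $h\in\Ham(M,\omega)$ and $g$ a symplectic homeomorphism supported in a small ball $B$ around $p$, and then \emph{replaces} $f$ by the modified map $\tilde f = h\circ g\circ\phi_K^1\circ g^{-1}\circ(\phi_K^1)^{-1}$ for a suitable $\phi_K^1\in\Ham(M,\omega)$ displacing $B$; this $\tilde f$ is a product $h\cdot(g\phi_K^1 g^{-1})\cdot(\phi_K^1)^{-1}$ that manifestly lies in $\mathcal{H}$ by normality, still has $p$ as its unique fixed point, but acquires a second non-smooth point at $\phi_K^1(p)$. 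Your proposal misses the idea that a \emph{new} homeomorphism must be constructed here, and consequently also misses the trade-off noted in the paper between the smoothness-off-$\{p\}$ property and membership in $\mathcal{H}$.
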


Note that in the last sentence of the above proposition, we do not claim that $f$ can be chosen to be smooth on $M\setminus \{p\}$ and simultaneously be contained in any normal subgroup of $\Sympeo(M, \omega)$ which contains $\Ham(M, \omega)$. As will be clear from the proof, our method of building $f$ in the normal closure of $\Ham(M,\omega)$ has the effect of creating a second non-smooth point. We do not know if both properties can be satisfied at the same time.

\begin{proof}[Proof of Theorem~\ref{theo:main}] Theorem  \ref{theo:invariant_tree_2} provides us with a Hamiltonian homeomorphism $\psi$ which satisfies all the requirements of Proposition 
\ref{prop:from-tree-to-point}. Thus, there exists a symplectic homeomorphism $f$ with only one fixed point $p$ and such that $f\circ\psi^{-1}\in\Hameo(M,\omega)$. Since $\psi\in \Hameo(M,\omega)$ we deduce that $f\in \Hameo(M,\omega)$ as well. Moreover, $\psi$ coincides with a Hamiltonian diffeomorphism in the complement of any neighborhood of $T$. This implies that $f$ can be chosen in any normal subgroup of $\mathrm{Sympeo}(M,\omega)$ containing $\Ham(M,\omega)$. It also implies that $\psi$ is smooth in the complement of $T$ and hence that $f$ can be chosen to be smooth in the complement of $p$.
\end{proof}

The following lemma will be useful for the proof Proposition \ref{prop:from-tree-to-point}.

\begin{lemma}\label{lemma:contractible-tree} Let $(M, \omega)$ be a symplectic manifold of dimension at least 4 and let $T\subset M$ be an embedded tree. Let $d$ be a Riemannian distance on $M$. Then, for every $\eps>0$ and every open neigborhood $U$ of $T$, there exists a Hamiltonian function $H$ supported in $U$ such that $\diam(\phi_H^1(T))\leq \eps$ and $\|H\|_{\infty}\leq \eps$.
\end{lemma}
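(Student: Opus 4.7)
My plan is to argue by induction on the number of edges of $T$. The base case, where $T$ is a single point, is trivial with $H \equiv 0$. For the inductive step, I pick a leaf vertex $v$ of $T$, let $e$ be its unique incident edge with other endpoint $w$, and set $T' := T \setminus (e^\circ \cup \{v\})$, which is an embedded tree with one fewer edge. I will build a local Hamiltonian $H_e$ supported in a thin tubular neighborhood of $e$ that collapses $e$ into a small neighborhood of $w$, invoke the induction hypothesis on $T'$ to obtain a Hamiltonian $\tilde H$ supported in a neighborhood of $T'$ disjoint from $\mathrm{supp}(H_e)$, and then set $H := \tilde H + H_e$; because the supports are disjoint, this sum generates $\phi^1_H = \phi^1_{\tilde H} \circ \phi^1_{H_e}$ with $\|H\|_\infty = \max(\|\tilde H\|_\infty, \|H_e\|_\infty)$.

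For the single-edge sub-construction, the arc $e$ is smooth and $1$-dimensional, hence isotropic in $M$ since $\dim M \geq 4$. Weinstein's tubular neighborhood theorem for isotropic submanifolds provides symplectic coordinates $(q, p, x, y)$ on a tubular neighborhood $V_e \subset U$ of $e$ in which $\omega = dq \wedge dp + \sum_j dx_j \wedge dy_j$ and $e$ corresponds to the zero section $\{(q,0,0,0) : q \in [0,1]\}$, with $v$ at $q = 0$ and $w$ at $q = 1$. For parameters $\delta, r > 0$, I take
\[
H_e(q, p, x, y) := p \, g(q) \, \chi(p, x, y),
\]
where $g : \R \to [0, 1]$ is smooth, equal to $1$ on a compact interval containing $[0, 1-\delta]$ and equal to $0$ on $[1-\delta/2, +\infty)$, and $\chi$ is a bump function supported in $\{|p|, |(x, y)| \leq r\}$ that equals $1$ near the origin. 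Then $H_e$ is compactly supported in $V_e$, vanishes in a neighborhood of $w$ (the region $q \geq 1 - \delta/2$), and satisfies $\|H_e\|_\infty \leq r$; on the zero section $e$ the flow obeys $\dot q = g(q)$, so direct integration yields $\phi^1_{H_e}(e) \subset \{(q, 0, 0, 0) : q \in [1-\delta, 1]\}$, a set of ambient diameter $O(\delta)$. Both $\|H_e\|_\infty$ and $\diam \phi^1_{H_e}(e)$ can thus be made arbitrarily small by shrinking $r$ and $\delta$.

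The crux is to make the inductive choices in the correct order. First, fix $V_e \subset U$ tight enough that $V_e \cap T' \subset B(w, \alpha_0)$ for some small $\alpha_0 > 0$, and set $U' := U \setminus \overline{V_e \setminus B(w, 2\alpha_0)}$, an open neighborhood of $T'$ contained in $U$. Apply the induction hypothesis to $T'$ with neighborhood $U'$ and tolerance $\eps/3$ to obtain $\tilde H$ supported in $U'$ with $\|\tilde H\|_\infty < \eps/3$ and $\diam \phi^1_{\tilde H}(T') < \eps/3$; let $C > 0$ be the Lipschitz constant of the now-fixed diffeomorphism $\phi^1_{\tilde H}$ on some compact set containing $B(w, \alpha_0)$. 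Only now choose single-edge parameters $r < \eps/3$ and $\delta$ small enough (in particular $\delta < 2\alpha_0$) that the resulting $H_e$ satisfies $\diam \phi^1_{H_e}(e) < \eps/(3C)$. Since $\mathrm{supp}(H_e) \subset V_e \setminus B(w, \alpha_0)$ is disjoint from $\mathrm{supp}(\tilde H) \subset U'$, the sum $H = \tilde H + H_e$ is supported in $U$ with $\|H\|_\infty < \eps/3 < \eps$. Setting $w' := \phi^1_{\tilde H}(w)$, the fact that $w$ lies in $T'$ and is fixed by $\phi^1_{H_e}$ gives $\phi^1_{\tilde H}(T') \subset B(w', \eps/3)$ and, via the Lipschitz bound, $\phi^1_{\tilde H}(\phi^1_{H_e}(e)) \subset B(w', \eps/3)$, so $\diam \phi^1_H(T) < 2\eps/3 < \eps$.

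The main obstacle is precisely the circular dependency between the parameters of $H_e$ and the Lipschitz constant $C$ of $\phi^1_{\tilde H}$: the induction hypothesis controls only $\|\tilde H\|_\infty$, and this gives no a priori bound on $C$, so the parameters for $H_e$ must be chosen \emph{after} $\tilde H$ is constructed. The staged ordering above, combined with the disjoint-support trick that permits the naive sum $H = \tilde H + H_e$, is the technical heart of the induction.
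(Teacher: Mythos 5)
Your overall strategy (induction on the number of edges, collapse a leaf edge towards its other endpoint using a Hamiltonian built in a Weinstein chart, glue by disjoint supports, and choose the parameters of $H_e$ \emph{after} $\tilde H$ so that the Lipschitz constant of $\phi^1_{\tilde H}$ is available) is a genuinely different route from the paper's. The paper instead first brings all of the vertices of $T$ into a small ball $B$ with one Hamiltonian $H_0$, uses continuity of $\chi$ to find a neighborhood $W$ of the vertices with $\phi^1_{H_0}(W)\subset B$, and then one at a time pushes the remaining compact sub-arcs $\chi(J_i)\subset\mathrm{Int}(I_i)$ into $B$ by Hamiltonians supported away from everything else.

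There is, however, a genuine gap in your argument. You write that ``the arc $e$ is smooth and $1$-dimensional, hence isotropic'' and then invoke the Weinstein tubular neighborhood theorem to get coordinates in which $e$ is the zero section on all of $[0,1]$. But by Definition~\ref{def:tree}, $\chi$ is only required to be a \emph{smooth} embedding on the \emph{interiors} of the edges; at the vertices $v$ and $w$ it is merely continuous. (This is not an artifact of the definition: the trees actually produced in Theorem~\ref{theo:invariant_curve} are built from curves $\alpha:\R\to M$ that converge to the critical points $p,q$ only in $C^0$, with no regularity at the limit.) Consequently the closed arc $e$ need not be a smooth submanifold-with-boundary, and there is no Weinstein chart containing the endpoint $v$. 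The natural repair — work only with a compact sub-arc $\chi([a,1-a])$ in the interior of the edge — breaks the collapsing: $H_e$ would then vanish on a neighborhood of $v$, so the small but nonempty piece $\chi([0,a])$ near $v$ is fixed by $\phi^1_{H_e}$ and, since $v\notin T'$ and $\mathrm{supp}(\tilde H)$ can be taken disjoint from a neighborhood of $v$, it is also fixed by $\phi^1_{\tilde H}$. After the full flow, one small piece of $T$ remains near $v$ while the rest is near $\phi^1_{\tilde H}(w)$, and the diameter does not become small. The paper's ordering — move the vertices (and, via continuity, a whole neighborhood $W$ of them) into $B$ first, and only then deal with the interior sub-arcs $\chi(J_i)$, whose endpoints already land in $B$ — is precisely what circumvents this. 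Your induction would need an analogous preliminary step that transports a small neighborhood of the non-smooth endpoint $v$ before (or together with) collapsing the smooth part of the edge; as written, the proof does not go through.
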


\begin{proof} 
Let $T_0$ be a finite tree and $\chi:T_0\to T$ a map as in Definition \ref{def:tree}. Let $U$ be an open set containing $T$. We first pick a Hamiltonian function $H_0$ supported in $U$, such that $\phi_{H_0}^1$ maps all the vertices of $T$ inside a ball $B$ included in $U$ and of diameter less than $\eps$. By continuity of $\chi$, there exist an open neighborhood $W$ of the set of vertices of $T$ and a closed subinterval $J_i\subset\mathrm{Int}(I_i)$, for every edge $I_i$, $i=1,\dots,r$ of $T_0$, such that \begin{itemize}
\item $T=(W\cap T)\cup \chi(J_1)\cup\dots\cup \chi(J_r)$,
\item $\phi_{H_0}^1(W)\subset B$.
\end{itemize}
To achieve the proof, we only need to find a Hamiltonian isotopy which moves the pieces of curves $\chi(J_1)$, \ldots , $\chi(J_r)$ into $B$ with the endpoints kept in $B$ along the isotopy. This can be achieved by successively choosing Hamiltonian functions $H_i$'s (for $i=1,\dots,r$) such that for all $i$, $\phi_{H_i}^1(\phi_{H_0}^1(\chi(J_i)))\subset B$  and the support of $H_i$ meets neither    $\phi_{H_0}^1(W\cap T)$ nor  any of the curves $\phi_{H_k}^1(\chi(J_k))$, for $k=1,\dots,i-1$ and $\chi(J_k)$, for $k=i+1,\dots,r$. Then, the Hamiltonian diffeomorphism $h=\phi_{H_r}^1\circ\dots \circ \phi_{H_1}^1\circ \phi_{H_0}^1$ sends $T$ into $B$, hence $\mathrm{diam}(h(T))\leq\eps$.

Moreover, it is a standard fact that the above Hamiltonian functions $H_0, \dots,H_r$ can be chosen arbitrarily small in the $\|\cdot\|_\infty$ norm. Thus, $h$ can be generated by a Hamiltonian $H$ satisfying $\|H\|_{\infty}\leq \eps$.
\end{proof}

\begin{proof}[Proof of Proposition \ref{prop:from-tree-to-point}] Let $\psi$ and $T$ be as in the statement of the proposition. We will build the symplectic homeomorphism $f$ as a $C^0$-limit of conjugates of $\psi$. 

For that purpose, let $W_0\supset W_1\supset\dots\supset W_k\supset\dots\supset T$ be a sequence of nested open neighborhoods of $T$ such that $\bigcap_{k\geq 0}W_k=T$. 
\begin{claim}\label{claim-contracting-sequence}
There exists a sequence of open sets $(U_i)_{i\in\N}$, a sequence of Hamiltonian diffeomorphisms $(h_i=\phi_{H_i}^1)_{i\in\N}$ and a subsequence $(W_{k_i})_{i\in\N}$ with the following properties: For all $i\in\N$, 

\begin{itemize} 
\item $U_i=\varphi_i(W_{k_i})$, where $\varphi_i=h_i\circ\dots\circ h_1\circ h_0$, 
\item $U_{i+1}\subset U_i$,
\item $H_{i+1}$ is supported in $U_i$,
\item $\diam(\overline{U_{i+1}})\leq\frac1{3^i}$,
\item $\|H_i\|_\infty\leq \frac1{3^i}$.
\end{itemize} 
\end{claim}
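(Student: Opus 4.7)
\medskip

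The plan is to construct the sequences by induction on $i$, with the main engine being Lemma \ref{lemma:contractible-tree} applied at each stage to the current push-forward of the tree.

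For the base case $i=0$, take $H_0 \equiv 0$, so that $h_0 = \varphi_0 = \mathrm{id}$, set $k_0 = 0$ and $U_0 = W_0$; all conditions that apply at index $0$ are trivially satisfied. Before starting, I would first pass to a subsequence of $(W_k)$ (still denoted $(W_k)$) chosen so that any open neighborhood $V$ of $T$ contains $W_k$ for all sufficiently large $k$: this can always be arranged since $T$ is compact (e.g.\ by replacing $W_k$ with the $1/k$-metric neighborhoods of $T$ intersected with $W_k$), and it is necessary for the argument below. This reduction is the only delicate preliminary; without it the intersection condition $\bigcap_k W_k = T$ does not by itself yield the ``eventual containment'' property I will need.

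For the inductive step, assume $H_0, \dots, H_i$, $\varphi_0, \dots, \varphi_i$, $k_0, \dots, k_i$, and $U_0, \dots, U_i$ are constructed. Since $\varphi_i$ is a symplectic diffeomorphism, $\varphi_i(T)$ is again an embedded tree (Definition \ref{def:tree}), and $U_i = \varphi_i(W_{k_i})$ is an open neighborhood of $\varphi_i(T)$. Apply Lemma \ref{lemma:contractible-tree} to the tree $\varphi_i(T)$ in the neighborhood $U_i$, with the parameter $\eps$ chosen to be smaller than $\tfrac{1}{3^{i+1}}$, obtaining a Hamiltonian $H_{i+1}$ supported in $U_i$ with $\|H_{i+1}\|_\infty \leq \tfrac{1}{3^{i+1}}$ and such that $\mathrm{diam}\bigl(\phi^1_{H_{i+1}}(\varphi_i(T))\bigr)$ is arbitrarily small. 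Set $h_{i+1} = \phi^1_{H_{i+1}}$ and $\varphi_{i+1} = h_{i+1} \circ \varphi_i$; pick an open neighborhood $V$ of the compact set $\varphi_{i+1}(T)$ with $\mathrm{diam}(\overline{V}) \leq \tfrac{1}{3^i}$. Then $\varphi_{i+1}^{-1}(V)$ is an open neighborhood of $T$, so by the arrangement made above there exists $k_{i+1} > k_i$ with $W_{k_{i+1}} \subset \varphi_{i+1}^{-1}(V)$, and I define $U_{i+1} := \varphi_{i+1}(W_{k_{i+1}})$.

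It remains to verify the five bullet points. By construction $U_{i+1} \subset V$, so $\mathrm{diam}(\overline{U_{i+1}}) \leq \tfrac{1}{3^i}$; the bound $\|H_{i+1}\|_\infty \leq \tfrac{1}{3^{i+1}}$ holds by the choice of $\eps$; and $H_{i+1}$ is supported in $U_i$ by the lemma. For the nesting $U_{i+1} \subset U_i$, note that $k_{i+1} > k_i$ implies $W_{k_{i+1}} \subset W_{k_i}$, hence $\varphi_i(W_{k_{i+1}}) \subset U_i$; since $h_{i+1}$ is supported in $U_i$, it equals the identity off $U_i$ and therefore preserves $U_i$ setwise, so $U_{i+1} = h_{i+1}(\varphi_i(W_{k_{i+1}})) \subset U_i$, as required. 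The main (and only) real obstacle in this argument is the preliminary thinning of the $W_k$ described above; once that is in place, the construction is a straightforward iterated application of Lemma \ref{lemma:contractible-tree}.
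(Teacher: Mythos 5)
Your proposal is correct and follows essentially the same strategy as the paper: induct using Lemma \ref{lemma:contractible-tree} applied to $\varphi_i(T)$ at each step, then choose $k_{i+1}$ large enough to fit inside a small set. Two small differences are worth noting. First, you get the nesting $U_{i+1}\subset U_i$ by observing that $h_{i+1}$, being compactly supported in $U_i$, preserves $U_i$ setwise and that $W_{k_{i+1}}\subset W_{k_i}$; the paper instead arranges $U_{j+1}\subset B_j\subset U_j$ directly, where $B_j$ is the small ball produced by the lemma. Both routes work. Second, you flag the ``eventual containment'' point: the hypothesis $\bigcap_k W_k=T$ alone does not guarantee that every neighborhood of $T$ contains some $W_k$, and you propose shrinking the $W_k$ (intersecting with $1/k$-neighborhoods of $T$) to enforce it. This is a legitimate observation which the paper leaves implicit --- in the paper the $W_k$ are chosen by the authors at the start of the proof of Proposition \ref{prop:from-tree-to-point} and can be taken to have this property from the outset (e.g.\ taking metric neighborhoods of $T$). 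Note also that your fix is a replacement of the sets, not a subsequence, though the wording is harmless.
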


\begin{proof}
We will construct these sequences by induction. First set $U_0=W_0$, $H_0=0$ and $k_0=0$. Then, assume that we have constructed sequences $(U_i)_{i\in\{0,\ldots,j\}}$, $(H_i)_{i\in\{0,\ldots,j\}}$, $(W_{k_i})_{i\in\{0,\ldots,j\}}$, with the desired properties. 

According to Lemma \ref{lemma:contractible-tree}, applied to the tree $\varphi_j(T)$, we can pick a Hamiltonian function $H_{j+1}$ supported in $U_j$, such that $\|H_{j+1}\|_\infty\leq \frac1{3^{j+1}}$ and, denoting $h_{j+1}=\phi_{H_{j+1}}^1$, $h_{j+1}(\varphi_j(T))$ is included in a ball $B_j\subset U_j$, of diameter less than $\frac1{3^{j}}$. Then pick $k_{j+1}>k_j$ sufficiently large so that 
$$h_{j+1}(\varphi_j(W_{k_{j+1}}))\subset B_j.$$
Then, $\diam(h_{j+1}(\varphi_j(W_{k_{j+1}})))\leq\tfrac1{3^j}$ and we can set $U_{j+1}=h_{j+1}(\varphi_j(W_{k_{j+1}}))$. Since $B_j\subset U_j$, we have $U_{j+1}\subset U_j$ and the three sequences $(U_i)_{i\in\{0,\ldots,j+1\}}$, $(H_i)_{i\in\{0,\ldots,j+1\}}$, $(W_{k_i})_{i\in\{0,\ldots,j+1\}}$ still have the required properties. By induction, we obtain the claimed infinite sequences.
\end{proof}

Since $\diam(\overline{U_i})$ converges to 0  when $i$ goes to infinity and since the $U_i$'s are nested, the intersection of the closures $\bigcap_{i\in\N}\overline{U_i}$ is a single point. Let $p$ denote this point.

Consider the sequence of maps $\varphi_i=h_i\circ\dots\circ h_1$. By construction, if $x\in T$ then  $\varphi_i(x)\in U_i$, hence it converges to $p$. Moreover, for every neighborhood $U$ of $T$, the restrictions  $\varphi_i|_{M\setminus U}$ stabilize for $i$ large. For every $x\notin T$, denote by $\varphi(x)$ the point $\varphi_i(x)$ for $i$ large enough. The map $\varphi$ is a diffeomorphism from $M\setminus T$ to $M\setminus \{p\}$. 

We define for all $x\in M\setminus\{p\}$, $$f(x)=\varphi\circ\psi\circ\varphi^{-1}(x),$$
and $f(p)=p$. 
We see that $p$ is the unique fixed point of $f$. Indeed, if we assume that $f$ admits another fixed point $q\neq p$ then, $\varphi(q)$ would be a fixed point of $\psi$ which is not contained in $T$ and this would be a contradiction.

The first part of Proposition \ref{prop:from-tree-to-point} thus follows if we prove that $f$ is a symplectic homeomorphism. This will be a consequence of the next claim, which requires us to introduce additional notations.

Without loss of generality, we may assume that for all $i$ and all $x$, the Hamiltonian $H_i(t,x)$ vanishes for $t$ in the complement of an open subinterval of $[0,1]$. Let $\tau_0=0$ and for every $i=1,2,\dots$, let $\tau_i=\sum_{k=1}^i\frac1{2^{k}}$. We consider the sequence of smooth Hamiltonian functions $K_i$ defined as concatenations of time-reparametrizations of the $H_i$'s as follows: 
$$K_i(t,x) =\begin{cases} 2^{k+1}H_k(2^{k+1}(t-\tau_k),x), &\forall x\in M,\ k=0,1,\dots,i,\ t\in[\tau_k,\tau_{k+1}]\\
0 & \forall x\in M,\ t\in[\tau_{i+1},1].
\end{cases}
$$
Each Hamiltonian $K_i$ generates the smooth isotopy $\varphi_i^t$ given by:
$$\varphi_i^t =\begin{cases} \phi_{H_k}^{2^{k+1}(t-\tau_k)}\circ h_{k-1}\circ\dots\circ h_0, & \forall k=0,1,\dots,i,\ t\in[\tau_k,\tau_{k+1}]\\
h_i\circ\dots\circ h_0, & \forall t\in[\tau_{i+1},1].
\end{cases}
$$
Note that by construction, for all indices $i<j$, one has $K_i=K_j$ and $\varphi_i^t=\varphi_j^t$ on the time interval $[0,\tau_{i+1}]$. Also, note that for $t=1$, we have $\varphi_i^1=\varphi_i$.

\begin{claim}\label{claim:isotopy}
The isotopies $\varphi_i^t\circ\psi\circ(\varphi_i^t)^{-1}$ $C^0$-converges to an isotopy of homeo\-morphisms $f^t$ as $i\to\infty$, satisfying $f^1=f$. In particular, $f$ is a symplectic homeomorphism. 
\end{claim}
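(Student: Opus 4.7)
My plan is to establish uniform $C^0$ convergence of the isotopies $f_i^t := \varphi_i^t\circ\psi\circ(\varphi_i^t)^{-1}$ to a continuous isotopy $f^t$ by a Cauchy argument, crucially exploiting the invariance $\psi(T)=T$ to control the behavior near the point $p$, where the inverses $(\varphi_i^t)^{-1}$ diverge. Once uniform convergence is in hand, $f^t$ is automatically continuous in $(t,x)$; the identity $f^1=f$ follows from a pointwise evaluation; and $f$ lies in $\Sympeo(M,\omega)$ as a $C^0$-limit of the conjugates $\varphi_i\psi\varphi_i^{-1}\in\Sympeo(M,\omega)$, $\Sympeo(M,\omega)$ being $C^0$-closed by definition.

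By construction, $\varphi_j^t = \varphi_i^t$ for every $j\geq i$ and every $t\in[0,\tau_{i+1}]$, so on this interval $f_i^t = f_j^t$ identically. On $t\in[\tau_{i+1},1]$ one has $\varphi_i^t \equiv \varphi_i$ and $\varphi_j^t = \beta_{ij}^t\circ\varphi_i$, where $\beta_{ij}^t$ is a composition of (partial) Hamiltonian flows generated by $H_{i+1},\ldots,H_j$. Since each $H_k$ ($k>i$) is supported in $U_{k-1}\subset U_i$, the map $\beta_{ij}^t$ is supported in and preserves $U_i$, and $d_{C^0}(\beta_{ij}^t,\mathrm{Id})\leq \diam(\overline{U_i})\leq 3^{-(i-1)}$. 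Setting $\sigma_i := \varphi_i\,\psi\,\varphi_i^{-1}$, on this interval $f_i^t=\sigma_i$ and $f_j^t=\beta_{ij}^t\,\sigma_i\,(\beta_{ij}^t)^{-1}$, so after the substitution $z=(\beta_{ij}^t)^{-1}(y)$ the estimate of $d_{C^0}(f_i^t,f_j^t)$ reduces to bounding
\[ \sup_{z\in M}\, d\bigl(\sigma_i(\beta(z)),\,\beta(\sigma_i(z))\bigr),\qquad \beta=\beta_{ij}^t. \]

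If $z$ and $\sigma_i(z)$ both lie outside $U_i$, this quantity is zero; if exactly one of them lies in $U_i$ it is bounded by $\diam(\overline{U_i})\to 0$. The delicate case is $z\in U_i$, and here the invariance $\psi(T)=T$ is decisive. Assume, as we may (taking for instance $W_k=\{x:d(x,T)<1/k\}$), that the $W_k$ form a neighborhood basis of $T$. Fix $m$; by continuity of $\psi$ and $\psi(T)=T$, there exists $k'$ with $\psi(W_{k'})\subset W_{k_m}$, and since $k_i\to\infty$ we can choose $N\geq m$ with $k_N\geq k'$. Then for $i\geq N$ and $z\in U_i$, using that $\varphi_i^{-1}(U_i)=W_{k_i}\subset W_{k'}$ and that $\varphi_i = \beta_{mi}^1\varphi_m$ with $\beta_{mi}^1$ supported in, hence preserving, $U_m$, we obtain the chain
\[ \varphi_i^{-1}(z)\in W_{k_i}\subset W_{k'},\quad \psi(\varphi_i^{-1}(z))\in W_{k_m},\quad \varphi_i(W_{k_m})=\beta_{mi}^1(U_m)=U_m, \]
forcing $\sigma_i(z)\in U_m$, and likewise $\sigma_i(\beta(z))\in U_m$; since $\beta$ is supported in $U_i\subset U_m$, it preserves $U_m$, so $\beta(\sigma_i(z))\in U_m$ as well. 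Both points therefore lie in $U_m$ and are at distance at most $\diam(\overline{U_m})$, which tends to $0$ as $m\to\infty$. Combining the three sub-cases yields $\sup_t d_{C^0}(f_i^t,f_j^t)\to 0$ as $i\to\infty$.

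Hence $f_i^t\to f^t$ uniformly in $(t,x)$. Each $f^t$ is a homeomorphism: for $t<1$ because $f^t=\varphi^t\psi(\varphi^t)^{-1}$ with $\varphi^t$ a Hamiltonian diffeomorphism (it equals $\varphi_i^t$ for $i$ large), and at $t=1$ because $f$ admits the continuous inverse $x\mapsto \varphi(\psi^{-1}(\varphi^{-1}(x)))$ on $M\setminus\{p\}$ extended by $p\mapsto p$ (using $\psi^{-1}(T)=T$). Pointwise evaluation of $\lim_i f_i^1$ confirms $f^1=f$: for $x\neq p$, $\varphi_i^{-1}(x)$ stabilizes to $\varphi^{-1}(x)$ so $f_i^1(x)\to \varphi(\psi(\varphi^{-1}(x)))=f(x)$; and $f_i^1(p)\in U_m$ for arbitrarily large $m$ by the same invariance/contraction argument, so $f_i^1(p)\to p=f(p)$. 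Finally, each $f_i^1\in\Sympeo(M,\omega)$ and $\Sympeo(M,\omega)$ is $C^0$-closed, so $f\in\Sympeo(M,\omega)$. The main obstacle throughout is precisely the case $z\in U_i$: because $(\varphi_i^t)^{-1}$ does not converge near $p$, the $C^0$-limit cannot simply be passed through the conjugation, and only the combination of the topological invariance $\psi(T)=T$ with the geometric identity $\varphi_i(W_{k_m})=U_m$ for $i\geq m$ gives the required uniform contraction of neighborhoods of $p$ onto themselves.
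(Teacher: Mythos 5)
Your proof is correct, and it takes a genuinely different route from the paper's even though both hinge on the same key geometric input. Both arguments ultimately rest on the fact that, by uniform continuity of $\psi$ together with $\psi(T)=T$, one has $\psi(W_{k_j})\subset W_{k_m}$ for $j$ large, which translates (via $\varphi_i(W_{k_m})=U_m$ for $i\geq m$) into the containment $\sigma_i(U_i)\subset U_m$; this is exactly what the paper records as $f_j^t(U_j)\subset B$ in its equation (12). Where you diverge is in the surrounding machinery. The paper first proves that the \emph{inverses} $(f_j^t)^{-1}$ \emph{stabilize} outside any fixed ball $B$ around $p$, via the recursion $f_{j+1}^t=\rho_j^t f_j^t(\rho_j^t)^{-1}$, and then deduces that $f_j^t\circ(f^t)^{-1}$ is supported in $B$, giving convergence of both $f_j^t$ and $(f_j^t)^{-1}$ simultaneously. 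You instead estimate $d_{C^0}(f_i^t,f_j^t)$ directly by writing $f_j^t=\beta\sigma_i\beta^{-1}$ on $[\tau_{i+1},1]$ and bounding the commutator-type quantity $\sup_z d(\sigma_i(\beta(z)),\beta(\sigma_i(z)))$, getting a Cauchy sequence. Your route is arguably more self-contained in that it proves forward $C^0$-convergence without passing through the inverses; the paper's route has the advantage of producing the $C^0$-convergence of $(f_i^t)^{-1}$ as a byproduct, which it then uses to see that $f^t$ is a homeomorphism. Two small remarks. First, in your case split, the sentence ``if exactly one of them lies in $U_i$ it is bounded by $\diam(\overline{U_i})$'' is only sound for the sub-case $z\notin U_i$, $\sigma_i(z)\in U_i$; for $z\in U_i$, $\sigma_i(z)\notin U_i$ there is no elementary bound on $d(\sigma_i(\beta(z)),\sigma_i(z))$ since $\sigma_i$ has no Lipschitz control --- but your third case already handles all of $z\in U_i$ via the invariance argument, so this is a wording slip rather than a gap. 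Second, your justification that $f=f^1$ is a homeomorphism (``$f$ admits the continuous inverse $\ldots$ extended by $p\mapsto p$'') is sketchier than what you need; but since $M$ is compact and you have already shown that $f$ is a continuous bijection (continuity from uniform convergence, bijectivity from the construction), the inverse is automatically continuous, so no further argument is actually required.
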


\begin{proof}[Proof of Claim \ref{claim:isotopy}] 
We denote $f_i^t=\varphi_i^t\circ\psi\circ(\varphi_i^t)^{-1}$ and let $f^t$ be the family of homeomorphisms defined for all $x\in M$, all integers $i=0,1,\dots,$ and all $t\in[0,\tau_{i+1}]$ by 
$$f^t(x) = f_i^t,$$
and $f^1(x)=f(x)$. 

First, note that by construction $f$ is a bijection. Note also that on any time interval, of the form $[0,1-\delta]$, with $\delta>0$, $f^t$ coincides with $f_i^t$ for $i$ large enough.
 We will prove that the sequences $f_i^t$ and $(f_i^t)^{-1}$ respectively $C^0$-converge to $f^t$ and $(f^t)^{-1}$. This will imply that $f^t$ is a homeomorphism for all $t$ (including $t=1$) and prove the claim. 

Let $B$ be a ball around $p$. For $i$ large enough, $U_i\subset B$, hence $\varphi_{j}^t(W_{k_i})\subset B$ for all $j\geq i$ and $t\in [\tau_{j+1},1]$. The uniform continuity of $\psi$ and the invariance of $T$ implies that for $j\geq i$ large enough, $\psi(W_{k_j})\subset W_{k_i}$. Thus, 
\begin{equation}\label{eq:fj} f_{j}^t(U_j)=\varphi_{j}^t\circ\psi\circ(\varphi_{j}^t)^{-1}(U_j)\subset B \end{equation} 
 for $j$ large enough and $t\in [\tau_{j+1},1]$. Let $j_0$ be such a large $j$. 

For all $j$, it can be easily checked that  
$$f_{j+1}^t=\rho_j^t\circ f_j^t\circ(\rho_j^t)^{-1},$$
where 
$$ \rho_j^t=\begin{cases}\id, & \forall t\in[0,\tau_{j+1}]\\
\phi_{H_{j+1}}^{2^{j+2}(t-\tau_{j+1})}, & \forall t\in[\tau_{j+1},\tau_{j+2}]\\
h_{j+1}, & \forall t\in[\tau_{j+2},1].
\end{cases}$$
It follows that if $t\in [0,\tau_{j+1}]$, then $(f_{j+1}^t)^{-1}(x)=(f_j^t)^{-1}(x)$ for all $x\in M$. If $t\in[\tau_{j+1}, 1]$, then for all $j\geq j_0$,
  $\rho_j^t$ is supported in $U_{j}\subset U_{j_0}\subset B$, thus for all $x\notin B$, \eqref{eq:fj} yields $(f_j^t)^{-1}\circ (\rho_j^t)^{-1}(x)=(f_j^t)^{-1}(x)\notin U_j$. This implies that $(f_{j+1}^t)^{-1}(x)=(f_j^t)^{-1}(x)$ for all $x\notin B$. Thus, the sequence $(f_i^t)^{-1}$ stabilizes in the complement of $B$, independently of $t$. Moreover, by construction the limit point of $(f_i^t)^{-1}(x)$ for $x\notin B$ is nothing but $(f^t)^{-1}(x)$.

It follows that for $j$ large enough, $f_j^t\circ (f^t)^{-1}$ is supported in $B$, hence is $C^0$-close to $\id$. This shows that $f_i^t$ $C^0$-converges to $f^t$. 
The same argument shows that $(f_j^t)^{-1}$ converges to $(f^t)^{-1}$.
\end{proof}

\medskip
We will now prove that the homeomorphism $f\circ \psi^{-1}$ is a Hamiltonian homeomorphism. 

The homeomorphism $f\circ\psi^{-1}$ is the time one map of the isotopy $f^t\circ \psi^{-1}$. According to Claim \ref{claim:isotopy}, this isotopy is the $C^0$-limit of the isotopies  $\varphi_i^t\circ\psi\circ(\varphi_i^t)^{-1}\circ\psi^{-1}$. Since these isotopies are generated by the continuous Hamiltonians 
$$F_i(t,x)=K_i(t,x)-K_i(t,\varphi_i^t\circ\psi^{-1}\circ(\varphi_i^t)^{-1}(x)),$$ 
the following claim implies that $f^t\circ\psi^{-1}$ is a hameotopy and hence that $f\circ\psi^{-1}$ is a Hamiltonian homeo\-morphism.

\begin{claim}\label{claim:continuous-hamiltonian}The continuous Hamiltonians $F_i$ converge uniformly as $i\to\infty$.
\end{claim}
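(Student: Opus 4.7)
The plan is to show that $(F_i)_{i\in\N}$ is a Cauchy sequence in the sup norm on $[0,1]\times M$, which immediately yields the claim.

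First I would estimate $\|K_i(t,\cdot)\|_\infty$ slice by slice in time. By the definition of $K_i$, for $t\in[\tau_k,\tau_{k+1}]$ with $k\le i$ we have $K_i(t,\cdot)=2^{k+1}H_k(2^{k+1}(t-\tau_k),\cdot)$, and Claim \ref{claim-contracting-sequence} gives $\|H_k\|_\infty\le 3^{-k}$. Hence
$$\sup_{x\in M}|K_i(t,x)|\le 2^{k+1}\cdot 3^{-k}=2\left(\tfrac{2}{3}\right)^k,$$
while $K_i(t,\cdot)\equiv 0$ on $[\tau_{i+1},1]$. Crucially, this slice-wise bound does not depend on $i$ and decays geometrically in $k$; the time reparametrization inflates the sup norm by a factor $2^{k+1}$, but this is dominated by the much faster decay $3^{-k}$ of $\|H_k\|_\infty$.

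Next I would use the telescoping compatibility of the construction. For $j>i$, both $K_i=K_j$ and $\varphi_i^t=\varphi_j^t$ hold on the whole time interval $[0,\tau_{i+1}]$, so $F_i\equiv F_j$ on $[0,\tau_{i+1}]\times M$. On the complementary interval $[\tau_{i+1},1]$ we have $K_i\equiv 0$, hence $F_i\equiv 0$ there and $|F_i-F_j|=|F_j|$. The trivial pointwise bound
$$|F_j(t,x)|\le|K_j(t,x)|+|K_j(t,\varphi_j^t\circ\psi^{-1}\circ(\varphi_j^t)^{-1}(x))|\le 2\sup_{y\in M}|K_j(t,y)|,$$
combined with the slice estimate above, gives $\|F_i-F_j\|_\infty\le 4(2/3)^{i+1}$ for every $j\ge i$.

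Therefore $(F_i)$ is Cauchy in the sup norm and converges uniformly to a continuous function $F_\infty$ on $[0,1]\times M$, which is precisely the content of the claim. I do not anticipate a serious obstacle: everything reduces to the geometric decay $\|H_k\|_\infty\le 3^{-k}$ from Claim \ref{claim-contracting-sequence} together with the fact that the generating Hamiltonians and the isotopies agree on longer and longer initial time intervals; no control on the conjugating maps $\varphi_j^t\circ\psi^{-1}\circ(\varphi_j^t)^{-1}$ is needed, since the difference $F_i-F_j$ is bounded just by the sup norm of $K_j$ on the short tail interval $[\tau_{i+1},1]$.
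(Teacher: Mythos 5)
Your proof is correct and takes a genuinely different route from the paper. The paper argues by identifying the limit explicitly: it first observes that the geometric decay $\|H_k\|_\infty\le 3^{-k}$ forces $K_i\to K$ uniformly for an explicit limit $K$, then invokes Claim \ref{claim:isotopy} to get uniform convergence of the conjugating maps $\varphi_i^t\circ\psi^{-1}\circ(\varphi_i^t)^{-1}$ to $(f^t)^{-1}$, and finally combines the two (implicitly using uniform continuity of $K$) to conclude $F_i(t,x)\to K(t,x)-K(t,(f^t)^{-1}(x))$. Your argument instead shows that $(F_i)$ is Cauchy directly, via the two observations that $F_i\equiv F_j$ on $[0,\tau_{i+1}]\times M$ (because $K_i=K_j$ and $\varphi_i^t=\varphi_j^t$ there) and that $F_i\equiv 0$ while $\|F_j(t,\cdot)\|_\infty\le 2\sup_y|K_j(t,y)|\le 4(2/3)^{i+1}$ on $[\tau_{i+1},1]$. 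This sidesteps Claim \ref{claim:isotopy} entirely and avoids the (mild) subtlety of passing to the limit inside the composition $K_i(t,g_i(t,x))$. What you give up is an explicit formula for the limiting continuous Hamiltonian; what you gain is a shorter, more self-contained argument that only uses the telescoping structure of the $K_i$'s and the summable bound $\|H_k\|_\infty\le 3^{-k}$.
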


\begin{proof}
The condition $\|H_i\|_{\infty}\leq \frac1{3^i}$ implies that the sequence of Hamiltonians $K_i$  converges uniformly to the continuous function $K(t,x)$ given for all $x\in M$, all integer $k=0,1,\dots$, and all $t\in[\tau_k,\tau_{k+1}]$ by 
$$K(t,x) = 2^{k+1}H_k(2^{k+1}(t-\tau_k),x),$$
and $K(1,x)=0$.
Now according to Claim \ref{claim:isotopy}, $\varphi_i^t\circ\psi^{-1}\circ(\varphi_i^t)^{-1}$ converges uniformly to $(f^t)^{-1}$. Therefore, $F_i(t,x)$ converges uniformly to $K(t,x)-K(t,(f^t)^{-1}(x))$.
\end{proof}

\medskip
We now pursue the proof of Proposition \ref{prop:from-tree-to-point}. Since $\varphi$ is a smooth diffeomorphism from $M\setminus T$ to $M\setminus\{p\}$, it is obvious that if $\psi$ is smooth in the complement of $T$, then the above construction provides a symplectic homeomorphism $f$ which is smooth in the complement of $p$. 

\medskip
Let us now assume that $\psi$ coincides with a Hamiltonian diffeomorphism on the complement of any neighborhood of $T$. Let $f$ be constructed as above. We will modify $f$ to define a new symplectic homeomorphism $\tilde{f}$ which is in any normal subgroup of $\Sympeo(M,\omega)$ containing $\Ham(M,\omega)$. 

Let $K$ be a smooth Hamiltonian diffeomorphism whose time-one map does not fix the point $p$. Then  $\phi_K^1(p)\neq f\circ\phi_K^1(p)$ and we can find a small enough ball $B$ around $p$, such that $\phi_K^1(B)\cap f\circ\phi_K^1(B)=\emptyset$.

For $i$ large enough, $f$ coincides with $\varphi_i \circ\psi\circ\varphi_i^{-1}$  on $M\setminus B$. Since by assumption $\psi$ coincides with some Hamiltonian diffeomorphism on $M\setminus \varphi_i^{-1}(B)$, we deduce that we can write $f=h\circ g$, where $h$ is a Hamiltonian diffeomorphism and $g$ is a symplectic homeomorphism which is the identity on the complement of $B$. Now set
 $$\tilde{f}=h\circ g\circ\phi_K^{1}\circ g^{-1}\circ (\phi_K^1)^{-1}.$$

We see that $\tilde{f}$ belongs to any normal subgroup of $\mathrm{Sympeo}(M,\omega)$ containing $\Ham(M,\omega)$. We claim that $p$ is the only fixed point of $\tilde{f}$. To see this, first note that $\phi_K^{1}\circ g^{-1}\circ (\phi_K^1)^{-1}$ is the identity on the complement of $\phi_K^1(B)$. It follows in particular that $f=h\circ g$ and $\tilde{f}$ coincide on  $M\setminus \phi_K^1(B)$, hence that $p$ is the only fixed point of $\tilde{f}$ in $M\setminus \phi_K^1(B)$. But if $x\in \phi_K^1(B)$, then $\tilde{f}(x)$ belongs to $f\circ\phi_K^1(B)$, hence is distinct from $x$.

We point out that since $g$ has only one non-smooth point, we see that $\tilde{f}$ has two non-smooth points, $p$ and $\phi_K^1(p)$. 
\end{proof}

\begin{remark}\label{rem:cl} We can now justify what we claimed in the discussion of Section \ref{sec:disc-does-there}, namely the fact that our Hamiltonian homeomorphism $f$ with a unique fixed point can be generated by a continuous Hamiltonian admiting $cl(M)$ distinct spectral invariants. As in Section \ref{sec:disc-does-there}, we restrict our discussion to the case of an aspherical symplectic manifold. 

We begin by recalling that spectral invariants of a smooth Hamiltonian depend on the Hamiltonian Lipschitz continuously; see \cite{schwarz}.  It follows that one can define spectral invariants for any continuous function.

The argument showing that $f$ can be generated by a continuous Hamiltonian with, at least, $cl(M)$ distinct spectral invariants requires four steps:

(i) The initial Hamiltonian $H$ of the construction (see Theorem \ref{theo:invariant_tree_2}) is a $C^2$-small Morse function. For such a Hamiltonian, Floer theory is nothing but Morse theory and it follows from the classical Lusternik-Schnirelman theory that $H$ must have at least $cl(M)$ distinct spectral invariants. 

(ii) The Hamiltonian homeomorphism $\psi$ obtained in Theorem  \ref{theo:invariant_tree_2} can be chosen so that $\psi\circ\phi_H^{-1}$ is generated by a Hamiltonian arbitrarily small in $\|\cdot\|_\infty$ norm. By continuity of spectral invariants, this implies that $\psi$ can be generated by a (continuous) Hamiltonian, which will be denoted by $G$, with spectral invariants close to those of $H$.

(iii) In the above proof of Proposition \ref{prop:from-tree-to-point}, the Hamiltonian homeomorphisms $f_i=\varphi_i\circ\psi\circ\varphi_i^{-1}$ are conjugate to $\psi$.  Hence, they can be generated by the Hamiltonians $G \circ \varphi_i^{-1}$ which have the same spectral invariants as $G$. 

(iv) Our Hamiltonian homeomorphism $f$ is constructed so that for $i$ large, $ f_i^{-1} \circ f $ is generated by a uniformly small Hamiltonian function $ \widetilde{F_i} $.  Thus, $f$ is generated by the Hamiltonian $ (G \circ \varphi_i^{-1}) \# \widetilde{F_i} $ whose spectral invariants are close to those of $G \circ \varphi_i^{-1}$ and hence to those of $H$. 
By choosing small enough perturbations, we can ensure that at least $cl(M)$ of these spectral invariants are distinct. 
\end{remark}

\begin{remark} \label{rem:barcode}
We should point out the argument presented in the above remark can  be modified to prove the following (stronger) statement on closed symplectic manifolds which are not necessarily aspherical:  the Hamiltonian homeomorphism $f$ can be generated by a continuous Hamiltonian, say $G$, whose spectral invariants are exactly the same as the spectral invariants of the initial $C^2$--small Morse function $H$.  

In fact, one could go even further: it is possible to show that the continuous Hamiltonian $G$ has the exact same barcode as the initial $C^2$--small Morse function $H$.  Hence, despite the fact that the time--1 map of $G$ has only one fixed point,  from a Floer theoretic point of view $G$ can not be distinguished from $H$.  For further information on the theory of barcodes see \cite{Barann,LNV,PS14,UZ}.  

Although these claims, and their proofs, are very interesting, in the interest of not lengthening the paper we do not present them here.
\end{remark}

\subsection{Building the tree from a Morse function} \label{sec:invariant_tree}
In this section we begin the proof of Theorem \ref{theo:invariant_tree_2}. Our first step will be to perturb our initial $C^2$-small Morse function $H$ so that it satisfies a number of additional properties. This is the content of the following lemma. In order to simplify our presentation, throughout the rest of this section, we will refer to local maxima/minima of a function as maxima/minima.  An extremum point of a function will be a point which is either a local maximum or a local minimum. 

\begin{lemma}\label{lemma:initial-morse-function}
On every closed symplectic manifold $(M,\omega)$ and for every Morse function $\tilde{H}$ on $M$, there exists a Morse function $H$ on $M$, arbitrarily $C^1$-close to $\tilde{H}$, with the following set of properties:
\begin{enumerate}
\item The function $H$ takes distinct values at distincts critical points,
\item Every critical point $p$ of $H$ which is an extremum admits a neighborhood with Darboux coordinates $(x_1,\ldots,x_n,y_1,\ldots,y_n)$ in which $H$ is of the form $H(p) + c\sum(x_i^2+y_i^2)$, where  $c\in\R\setminus\{0\}$ is a constant which can be chosen to have arbitrarily small magnitude,
\item For every critical point of $H$ which is not extremal, there exist local Darboux coordinates $(x_1,\ldots,x_n,y_1,\ldots,y_n)$ in which $H = c(x_1^2 -y_1^2) +Q$, where $ c $ is some non-zero constant, and $Q$ is a quadratic form in the variables $(x_2, \ldots, x_n, y_2, \ldots, y_n)$.
\end{enumerate}
\end{lemma}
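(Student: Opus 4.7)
The plan is to obtain $H$ through successive $C^1$-small local modifications of $\tilde H$, each supported in a small Darboux neighborhood of one of its critical points and disjoint from the others. Property 1 is the easiest: the set of Morse functions with pairwise distinct critical values is $C^\infty$-dense, and can be achieved by a $C^\infty$-small perturbation that adds small constants near each critical point via bump functions, shifting the values without moving the critical points. This step can be carried out last, after the normal forms have been put in place.

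For property 2, at each extremum $p$ I would fix Darboux coordinates $(x_1,\ldots,y_n)$ centered at $p$ via Darboux's theorem and replace $\tilde H$ on the ball $\{r \leq \delta/2\}$ by the pure quadratic $\tilde H(p) + c\sum_i(x_i^2+y_i^2)$, smoothly interpolated to $\tilde H$ on the surrounding annulus via a radial cutoff $\chi(r/\delta)$. The constant $c$ is chosen with the same sign as the eigenvalues of the original Hessian $M_0$ at $p$ and with $|c|$ arbitrarily small (in particular less than the smallest eigenvalue of $|M_0|$). The $C^1$-norm of the modification is $O(\delta)$, and an elementary estimate of the gradient on the transition annulus --- relying on the fact that the two interpolated quadratic forms are definite of the same sign, so that $M_t := \chi\cdot 2cI + (1-\chi)M_0$ is positive definite throughout --- shows that the radial component stays strictly positive, ruling out new critical points.

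For property 3, at each non-extremal critical point $p$ the construction proceeds analogously, but the gradient estimate requires more care. Choose Darboux coordinates around $p$ and a target Hessian $M_* = 2c\,\mathrm{diag}(1,-1) \oplus 2Q$ of the same signature as the original Hessian $M_0$; this is possible because $p$ is a saddle. Since the space of non-degenerate symmetric bilinear forms of fixed signature is path-connected, one can join $M_0$ to $M_*$ by a path of non-degenerate forms and subdivide it into small steps $M_0 = M^{(0)}, \ldots, M^{(N)} = M_*$. I then iteratively perform cutoff replacements on nested concentric balls: at step $k$, substitute the current pure quadratic form by the quadratic with Hessian $M^{(k)}$ on a strictly smaller ball, using a radial cutoff. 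Choosing the subdivision fine enough ensures that the linear interpolation $M^{(t)} := \chi M^{(k)} + (1-\chi)M^{(k-1)}$ is uniformly non-degenerate in each transition annulus, so that the linear term in the gradient dominates the cutoff-derivative contribution and no new critical points appear. The cumulative $C^1$-norm of all modifications is controlled by the sequence of cutoff scales and can be made arbitrarily small.

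The main obstacle is precisely this gradient estimate in the saddle case. The gradient in each transition annulus takes the form
\[
\nabla H \;=\; M^{(t)}\vec r \;+\; (\chi'/\delta)\,(Q^{(k)} - Q^{(k-1)})(\vec r)\,\hat r \;+\; O(r^2),
\]
so non-vanishing is ensured only when the step size $|M^{(k)} - M^{(k-1)}|$ is smaller than a universal constant times the smallest singular value of $M^{(t)}$, uniformly in $t$. This quantitative condition on the Hessian path --- absent in the extremum case, where definiteness provides it automatically --- is what forces the subdivision to be fine, and constitutes the main technical point of the argument.
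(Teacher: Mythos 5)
Your proof is correct, and for property 2 it follows essentially the same idea as the paper (the paper also interpolates between $\tilde H$ and a definite quadratic form on an annulus, citing the elementary fact that for two positive definite forms $Q_1\leq Q_2$ the interpolation has no spurious critical points — you make this explicit via the radial derivative). For property 3, however, you take a genuinely different route. The paper sidesteps the gradient estimate in the saddle case entirely: it first applies the Morse lemma to put $\tilde H$ in the form $\tilde H(p)+v_1^2-w_1^2+\sum_{i\geq2}\pm v_i^2\pm w_i^2$ in non-Darboux coordinates $(v,w)$, then builds a diffeomorphism $\phi$ of $M$, supported near $p$, carrying the $(v,w)$-chart to a fixed Darboux chart $(x,y)$, and finally pushes $\tilde H$ forward by the rescaled map $\phi_\lambda(v)=\lambda\,\phi(v/\lambda)$. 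Because $p$ is critical for $\tilde H$, the pushforward $\tilde H\circ\phi_\lambda^{-1}$ is automatically $C^1$-close to $\tilde H$ as $\lambda\to0$, is Morse with the same critical set (being a pullback by a diffeomorphism), and near $p$ coincides in the Darboux chart with the desired quadratic normal form — no cutoff, no Hessian path, no step-size threshold. Your approach instead deforms the Hessian through the connected space of non-degenerate symmetric matrices of fixed signature, with nested radial cutoffs; this is more elementary in the sense that it avoids the rescaling idea, but the price is exactly the quantitative estimate you flag (the step size must be dominated by $\sigma_{\min}(M^{(t)})$ uniformly on each transition annulus). That estimate is correct as you stated it — the cutoff term is $O(\|\Delta M\|\,r)$ while the leading term is $\geq\sigma_{\min}(M^{(t)})\,r$ and the $O(r^2)$ higher-order corrections are harmless for small $r$ — so there is no gap, just a heavier argument than the paper's pushforward trick, which dispatches the no-new-critical-points issue for free.
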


The relevant consequence of the third condition is that locally near $p$, the Hamiltonian flow of $H$ preserves the  symplectic 2-plane $P=\{(x_1,0,\dots,0,y_1, \\ 0,\dots,0)\}$ and acts as a linear hyperbolic flow on it. Indeed, $X_H = X_{ c(x_1^2 - y_1^2)} +  X_{Q}$, where $X_H, X_{c(x_1^2 - y_1^2)},  X_{Q}$ denote the associated Hamiltonian vector fields.   It can easily be checked that  $X_{Q}|_P = 0$.  Hence, the restriction of the  Hamiltonian flow of $H$ to the plane $P$ coincides with the Hamiltonian flow of $ c(x_1^2 -y_1^2) $.

\begin{proof} We can assume without loss of generality that $\tilde H$ is a Morse function on $M$ which has distinct critical values.  We then modify $\tilde H$ near every critical point as follows. Let $p$ be a critical point of $\tilde H$. Up to addition of a constant, we may assume that $\tilde{H}(p)=0$.

Assume that $p$ is a local minimum and let $(x_1,\ldots,x_n, y_1,\ldots,y_n)$ be Darboux coordinates near $p$. We first make a $C^2$-small perturbation of $\tilde H$ so that in a small neighborhood of $p$, it coincides with its Hessian near $p$. Then, note that given two positive definite quadratic forms $Q_1\leq Q_2$ and two open sets $V\supset U\ni 0$, there always exists a smooth function $h$ which coincides with $Q_1$ on $U$ and with $Q_2$ on the complement of $V$, and having 0 as its as only critical point.
Now define $H$ by replacing $\tilde H$ by such a function $h$ obtained in a neighborhood of $p$ from the quadratic forms $Q_1=c\sum x_i^2+y_i^2$ for some small $c>0$ and for $Q_2$ the Hessian of $H$. This perturbation can be made arbitrarily $C^1$-small by using a small neighborhood of $p$. Local maxima are worked out similarly.

Now assume that $p$ is not an extremum. 
By the Morse lemma, and since $p$ is not an extremum, there exists a local chart on a neighbourhood of $ p $, parametrised via coordinates $(v_1,\dots,v_n,w_1,\dots, w_n) $ by a small open ball $ W $ centered at the origin in the Euclidean space $ \mathbb{R}^{2n} $, such that $ p = (0,0, \ldots , 0) $ in these coordinates, and moreover $ \tilde H $ has the form $\tilde{H}= \tilde H(p) + v_1^2-w_1^2+\sum_{i=2}^n\pm v_i^2\pm w_i^2$. Now, let $ (x_1, \ldots, x_n , y_1, \ldots, y_n) $ be Darboux coordinates in a neighbourhood of $ p $, such that $ p = (0,0, \ldots , 0) $ in these coordinates as well. Then, choose a diffeomorphism $ \phi $ of $ M $, supported in a very small ball around $ p $, such that near $p$,  $ \phi $ carries the coordinate system $ (v_1,\dots,v_n,w_1,\dots, w_n) $ to the coordinate system $ (x_1, \ldots, x_n , y_1, \ldots, y_n) $''.
 Consider its ``rescalings'' $ \phi_\lambda: M \rightarrow M $, for $ \lambda \in (0,1) $, defined by $$  \phi_\lambda (v_1,\dots,v_n, w_1,\dots, w_n) = \lambda \phi\left(\frac{v_1}{\lambda},\dots,\frac{v_n}{\lambda},\frac{w_1}{\lambda},\dots, \frac{w_n}{\lambda}\right) ,$$ for $ (v_1,\dots,v_n,w_1,\dots, w_n) \in \lambda W \subset \mathbb{R}^{2n} $, and by the identity on the complement of $ \lambda W $. Now, replacing $ \tilde H $ by the pushforward $ H := (\phi_{\lambda})_* \tilde H = \tilde H \circ (\phi_{\lambda})^{-1} $, we get that for small enough $ \lambda $, $ H $ is a smooth function on $ M $ which is $ C^1 $-close to $ \tilde  H$, and which has the form $ H = H(p) + x_1^2-y_1^2+\sum_{i=2}^n\pm x_i^2\pm y_i^2$ in a small neighbourhood of $ p $.

\end{proof}


In the above lemma, the fact that $H$ and $\tilde{H}$ are $C^1$-close implies that $d_{C^0}(\phi_{\tilde{H}}^1,\phi_H^1)$ is small and $\phi_H^1\circ\phi_{\tilde{H}}^{-1}$ is generated by a uniformly small Hamiltonian. Hence, Theorem \ref{theo:invariant_tree_2} now amounts to the following proposition.

\begin{prop}\label{prop:invariant_tree_2}
Let $H$ be a Morse function on $M$ satisfying the properties of Lemma \ref{lemma:initial-morse-function}.
If $H$ is sufficiently $C^2$-small, then for every $\eps,\rho>0$, there exists $\psi\in\mathrm{Hameo}(M,\omega)$ and an embedded tree $T$ (see Definition \ref{def:tree}) such that:
\begin{itemize}
\item $T$ is $\psi$-invariant, i.e. $\psi(T)=T$,
\item $T$ contains all the fixed points of $\psi$,
\item $d_{C^0}(\phi_H^1,\psi)<\eps$ and $\psi\circ\phi_H^{-1}$ is generated by a continuous Hamiltonian $F$ such that $\|F\|_{\infty}<\rho$,
\item $\psi$ coincides with a Hamiltonian diffeomorphism in the complement of any neighborhood of $T$.
\end{itemize}
\end{prop}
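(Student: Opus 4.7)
The strategy is to construct the invariant tree $T$ and the homeomorphism $\psi$ inductively, building up one arc at a time by applying Theorem \ref{theo:invariant_curve} (the technical heart of the paper, deferred to the next section) to create a single invariant arc between two adjacent fixed points without disturbing what has already been done. First, since $H$ is $C^2$-small and Morse, the fixed points of $\phi_H^1$ coincide with the critical points of $H$, say $p_1,\dots,p_N$, and are in particular finite in number. I fix a combinatorial tree on this vertex set; a convenient choice is to order the critical points by their values $H(p_1)<\dots<H(p_N)$ and connect $p_i$ to $p_{i+1}$ for $i=1,\dots,N-1$, obtaining a path which is a tree through all fixed points.

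For each edge of this path I apply Theorem \ref{theo:invariant_curve} to replace the current Hamiltonian homeomorphism by one which additionally leaves invariant a smooth embedded arc $\gamma_i$ from $p_i$ to $p_{i+1}$, creates no new fixed points outside this arc, and coincides with a Hamiltonian diffeomorphism outside a small tubular neighborhood $U_i$ of $\gamma_i$. The arcs $\gamma_i$ and neighborhoods $U_i$ are chosen to be pairwise disjoint except at their shared endpoints, which is possible thanks to the local normal forms provided by Lemma \ref{lemma:initial-morse-function} at both extrema (rotations) and at saddles (hyperbolic on a symplectic 2-plane, quadratic on the transverse directions) together with the fact that $\dim M\geq 4$ gives enough transverse room. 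Iterating over the $N-1$ edges yields $\psi\in\Hameo(M,\omega)$ leaving the embedded tree $T=\gamma_1\cup\dots\cup\gamma_{N-1}$ invariant, containing all fixed points of $\psi$ as vertices, and smooth outside any prescribed neighborhood of $T$.

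The quantitative requirements $d_{C^0}(\phi_H^1,\psi)<\eps$ and $\|F\|_\infty<\rho$ for the generator of $\psi\circ\phi_H^{-1}$ are then a matter of bookkeeping: each application of Theorem \ref{theo:invariant_curve} can be arranged to contribute at most $\eps/N$ in $C^0$-distance to the previously built map and at most $\rho/N$ in the $\|\cdot\|_\infty$ norm of the associated continuous Hamiltonian, and by the triangle inequality together with the composition formula $H\#K$ for continuous Hamiltonians recalled in Section \ref{sec:sympl-hamilt-home}, the total errors stay within the prescribed bounds after finitely many steps.

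The main obstacle I expect is the compatibility of successive applications of Theorem \ref{theo:invariant_curve}. When adding $\gamma_{i+1}$, one must neither destroy the invariance of the previously built arcs $\gamma_1,\dots,\gamma_i$ nor introduce spurious fixed points in the overlap near the shared vertex. The first issue is handled by arranging $U_{i+1}$ to be disjoint from $\gamma_1,\dots,\gamma_i$ except at a single endpoint, so that the modification has no effect on the previous arcs outside a small neighborhood of that endpoint; near that endpoint, one exploits the local structure of $\phi_H^1$ to route each arc into the fixed point along a prescribed stable/unstable direction (at a saddle) or along a prescribed transverse ray (at an extremum) so that both arcs can coexist as invariant sets. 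The entire analytic content of the construction is thus packaged into Theorem \ref{theo:invariant_curve}; the role of the present proposition is a relatively mild combinatorial-geometric assembly on top of that result.
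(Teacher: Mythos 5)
Your high-level strategy is the right one — realize the tree edge by edge, applying Theorem \ref{theo:invariant_curve} inductively, keep the supports of successive modifications essentially disjoint, and sum the $C^0$ and $\|\cdot\|_\infty$ contributions — and this is indeed what the paper does. However, your concrete combinatorial scheme for choosing the tree has a genuine gap. You propose ordering the critical points by their $H$-values, $H(p_1)<\dots<H(p_N)$, and connecting $p_i$ to $p_{i+1}$. The hypotheses of Theorem \ref{theo:invariant_curve} require, for each edge, a smooth \emph{embedded} curve $\gamma$ joining the two critical points along which $H$ is \emph{strictly decreasing} and whose interior lies in a single edge-region of the level-set structure. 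Such a monotone arc does not exist between arbitrary consecutive critical points in the $H$-ordering: for example, two local maxima (or two local minima) can never be joined by a strictly $H$-monotone arc, and more generally two critical points can be so joined only if they are adjacent in the Reeb graph of $H$. Your linear chain therefore prescribes edges that simply cannot be realized.

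The paper resolves this precisely by taking a spanning subtree $\m T$ of the Reeb graph $\m R$ of $H$ (which is connected because $M$ is). Each edge $e_{p,q}$ of $\m R$ corresponds to an open set $U_{p,q}$ containing no critical points, and this guarantees the existence of the required monotone arc $\gamma_{p,q}$ with $\m C(\gamma_{p,q}((0,1)))=U_{p,q}$, together with canonical entry/exit orbits $x_{p,q}$, $y_{q,p}$ on the local stable/unstable manifolds at non-extremal vertices. The Reeb graph also controls the bi-degrees of the vertices so that when two edges of $\m T$ share a saddle, they can be routed along distinct invariant orbits and given disjoint tubular neighborhoods — the disjointness you assert but do not actually secure. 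So the fix is not cosmetic: replacing your $H$-ordered path by a spanning tree of the Reeb graph is exactly the ingredient needed to make the induction run. Once that replacement is made, the rest of your argument (disjoint supports $V_p(\beta)$, per-edge bounds $\eps/N$ and $\rho/N$, composition formula for the continuous Hamiltonians) matches the paper's proof.
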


Our proof of Proposition \ref{prop:invariant_tree_2} will make use of the notion of Reeb graph  whose definition we now recall.

\begin{definition}\label{def:connected-component}We assume that a function $H:M\to\R$ is given. For every point $x\in M$ we define $\m C(x)$ as the connected component of $x$ in the level set $H^{-1}(H(x))$. For a subset $X\subset M$, we define $\m C(X)=\bigcup_{x\in X} \m C(x)$.

The Reeb graph is the quotient space $\m R=M/\!\sim $, where $\sim$ is the equivalence relation given by $x\sim y$ if and only if $\m C(x)=\m C(y)$. 
\end{definition}

It follows from basic Morse theory that if $H$ is a Morse function whose critical values are pairwise distinct, then the space $\m R$ actually carries the structure of a graph whose vertices correspond to the critical points of $H$ and such that an edge $e_{p,q}$ between two critical points $p$ and $q$ corresponds to a connected open subset $U_{p,q}\subset M$ such that:
\begin{itemize}
\item $U_{p,q}$ contains no critical point of $H$,
\item The canonical projection $U_{p,q}\to U_{p,q}/\!\sim $ is a fibration onto an interval (this interval can be seen as parametrizing the edge),
\item The closure $\overline{U_{p,q}}$ has two boundary components, one containing $p$ and the other containing $q$.
\end{itemize}

By construction, $H$ descends to a well defined function on $\m R$, which we still denote $H$. This function is monotone on each edge. We assign orientations to the edges so that $H$ is decreasing on each of them. As for any oriented graph, any vertex $p$ admits a bi-degree which is a couple  $(d_-(p),d_+(p))$, where $d_-(p)$ is the number of edges with head end at $p$ and $d_+(p)$ is the number of edges with tail end at $p$. This bi-degree is related to the Morse index of $p$. Indeed, local maxima have bi-degree $(0,1)$, local minima have bi-degree $(1,0)$, and all the other points have bi-degree $(1,1)$, $(2,1)$ or $(1,2)$ (but $(2,1)$ is only possible for points of Morse index $\dim(M)-1$ and $(1,2)$ is only possible for points of Morse index $1$).

\begin{figure}[h!]
\centering
\def\svgwidth{0.6\textwidth}
{\small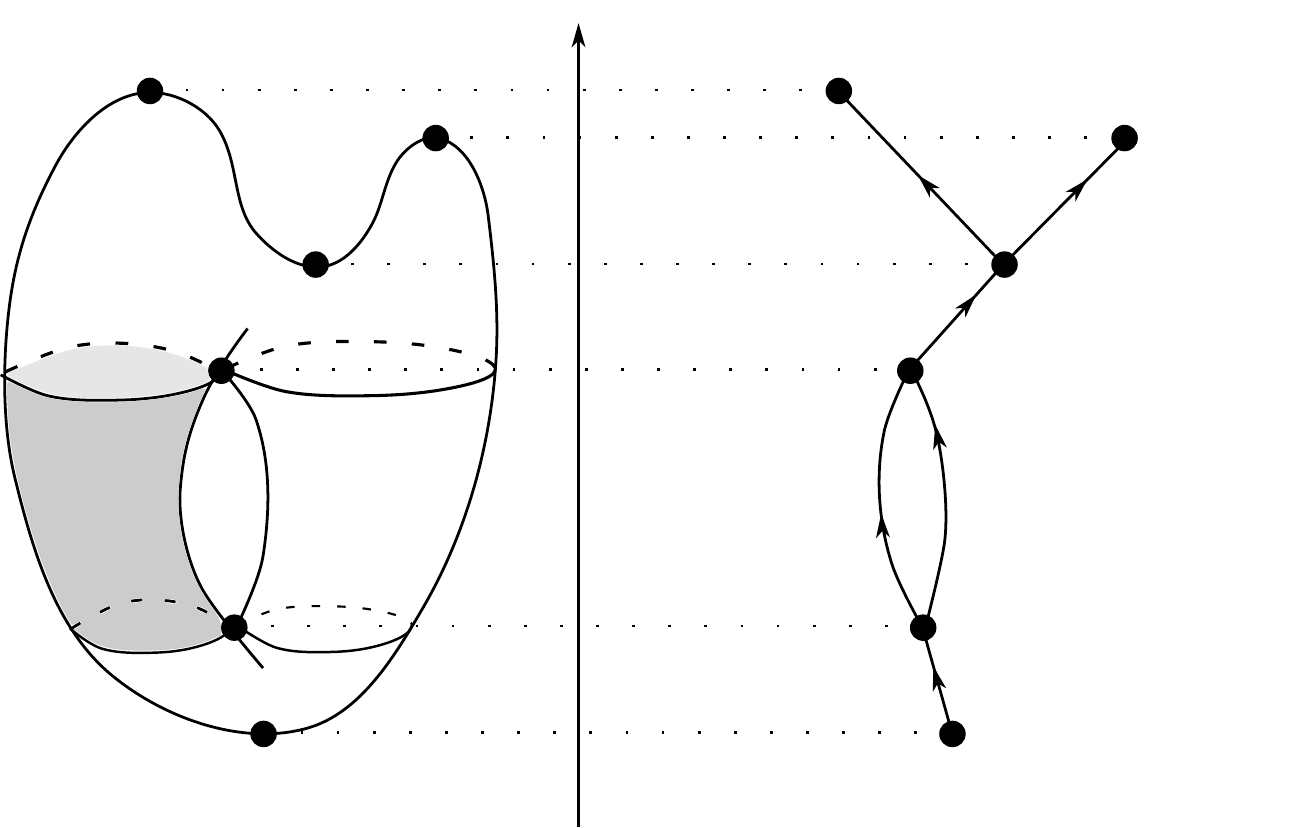}
\caption{The Reeb graph of a height function $H$ on an 2-torus, the bi-degrees of the different vertices (Note that the bi-degree $(1,1)$ cannot happen on a surface) and an example of an edge $e_{p,q}$ with the corresponding open set $U_{p,q}$.}
\label{fig:Reeb_graph}
\end{figure}


\begin{proof}[Proof of Proposition \ref{prop:invariant_tree_2}]
Let $\eps,\rho>0$. Since $M$ is connected, so is the Reeb graph $\m R$ of $H$, and there exists a subgraph $\m T$ of $\m R$ which is a tree and contains all the vertices of $\m R$. To each vertex of $\m T$ we associate the corresponding critical point of $H$. This gives an embedding of the vertices of $\m T$ into $M$. The complicated part of the proof will be to embed the edges. 


By assumption, near every critical point $p$ of $H$ which is not extremal, there exist Darboux coordinates  $(x_1,\ldots,x_n,y_1,\ldots,y_n)$ such that $H= c(x_1^2 -y_1^2) + Q'$, where $c\neq 0$ and $Q'$ is a quadratic form whose kernel is  the plane $P=\{(x_1,0,\dots,0,y_1,0,\dots,0)\}$. As mentioned after the statement of Lemma \ref{lemma:initial-morse-function},    this implies in particular that locally near $p$, the flow of $H$ preserves $P$ and acts on it as the flow of the quadratic form $c(x_1^2-y_1^2)$, which is linear hyperbolic. Thus the flow of $H$ admits in $P$ two orbits converging to $p$ when time goes to $+\infty$ and two orbits converging to $p$ when time goes to $-\infty$. Moreover, in the plane $P$ and still locally near the point $p$, these four orbits are the frontiers of four regions; two of them correspond to $H < H(p)$ and the two others to $H > H(p)$.

To each edge $e_{p,q}$ with tail end at $p$, we associate one of the two orbits converging to $p$ in the past. The local picture above shows that this orbit belongs to the closure $\overline{U_{p,q}}$. Whenever there are two such edges (i.e. $d_+(p)=2$) we demand that the two associated orbits are distinct. Again, the local picture shows that this is possible. We let $x_{p,q}$ be a point located near $p$ on the orbit associated to $e_{p,q}$. To summarize the situation, we have:
$$\overline{U_{p,q}}\ni \phi_H^t(x_{p,q})\stackrel{t\to-\infty}{\longrightarrow} p.$$ 
We associate in a similar way to each edge $e_{q,p}$ with head end at $p$ a point $y_{q,p}$  such that: $$\overline{U_{q,p}}\ni \phi_H^t(y_{q,p})\stackrel{t\to+\infty}{\longrightarrow} p.$$ 

For each oriented edge $e_{p,q}$ we now choose a path $\gamma_{p,q}:[0,1]\to M$ satisfying the following properties:
\begin{itemize}
\item For all $t\in[0,1]$, $\frac{d}{dt}(H\circ \gamma_{p,q}(t))<0$, 
\item $\gamma_{p,q}(0)=p$ if $p$ is a local maximum and $\gamma_{p,q}(0)=x_{p,q}$ otherwise,
\item $\gamma_{p,q}(1)=q$ if $q$ is a local minimum and $\gamma_{p,q}(1)=y_{p,q}$ otherwise.
\end{itemize} 
Note that it follows from the above properties that $U_{p,q}=\m C(\gamma((0,1)))$.

For each non extremal critical point $p$, and for each piece of orbit $\beta$ which is either of the form $\{\phi_H^t(x_{p,q})\,:\,t\in(-\infty,1]\}$ or $\{\phi_H^t(y_{q,p})\,:\,t\in[-1,+\infty)\}$, we choose some open sets $V_p(\beta)$, such that 
$$\mathrm{image}(\beta)\subset V_p(\beta)\subset M\setminus\{p\}.$$   
We choose these open sets sufficiently small such that all the $V_p(\beta)$'s, for $p$ ranging over all non-extremal critical points of $H$ and $\beta$ over all pieces of orbits as above, are pairwise disjoint.

\begin{figure}[h!]
\centering
\def\svgwidth{0.8\textwidth}
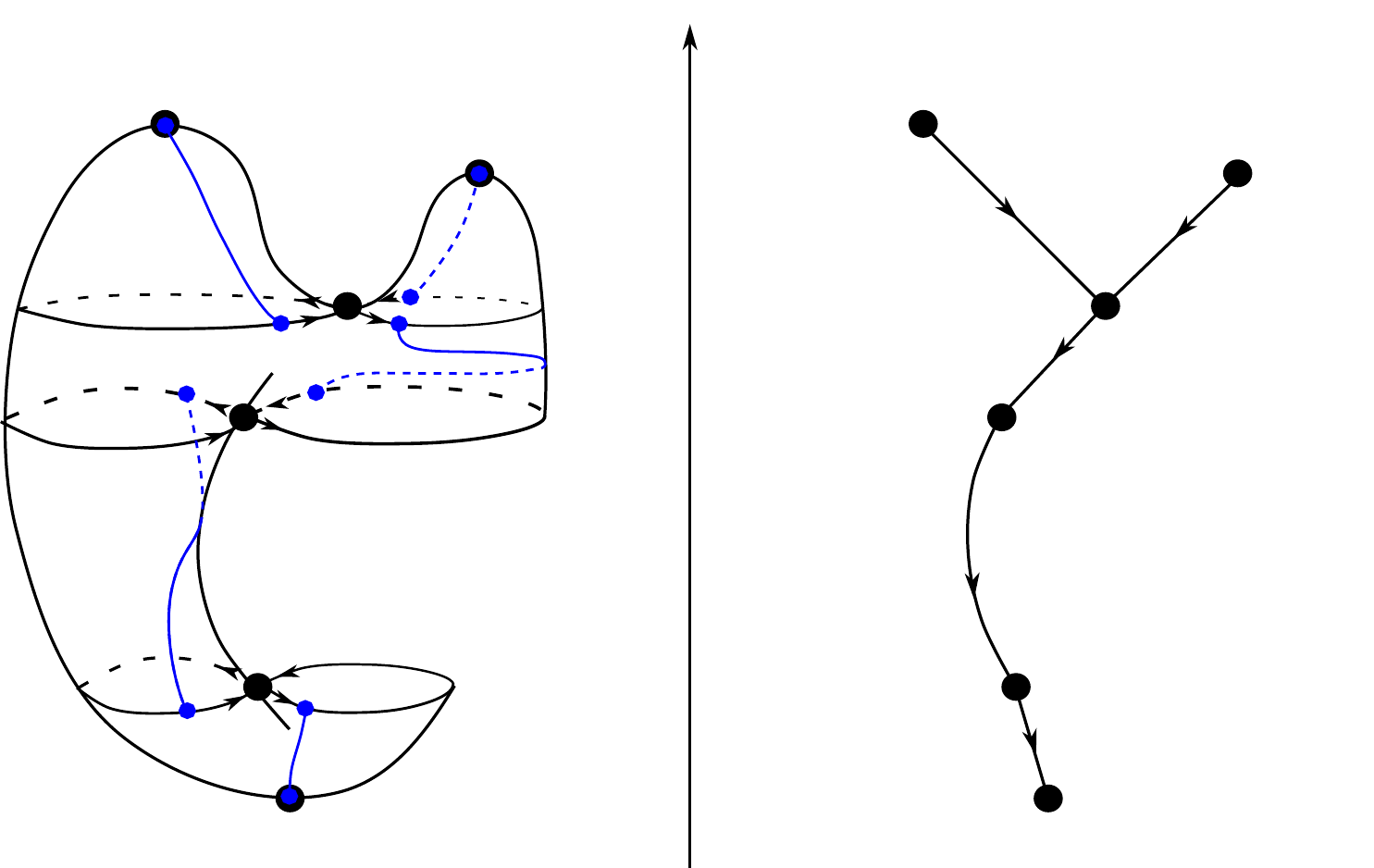
\caption{The right-hand side represents the tree $\mathcal{T}$ obtained from the Reeb graph of Figure \ref{fig:Reeb_graph}. On the left hand side, the arrows correspond to the four orbits converging to non extremal points, and the blue curves represent possible choices for the curves $\gamma_{p,q}$.}
\label{fig:Tree}
\end{figure}

We are now ready to construct the perturbation of $H$ and the embedding of $\m T$. We will proceed  by induction on the edges of $\m T$. For that purpose we number the edges $e_1,e_2,\dots,e_N$, and for each index $i$, we let $p_i$ and $q_i$ be respectively the tail end and head end of $e_i$. With the previous notations, this means that $e_i=e_{p_i,q_i}$. We also rename the points $x_i=x_{p_i,q_i}$ and $y_i=y_{p_i,q_i}$, when they are defined and the paths $\gamma_i=\gamma_{p_i,q_i}$. For the sake of brevity, we omit the piece of orbit from the notation and denote $V_{p_i}$ and $V_{q_i}$ the corresponding open subsets of the form $V_p(\beta)$ when they are defined; it will always be clear from the context which orbit is considered. If $p_i$ is a local maximum, we set $V_{p_i}=\emptyset$; if $q_i$ is a local minimum, we set $V_{q_i}=\emptyset$.

Assume that for some $j\geq 0$, we have realized the edges $e_1,\dots,e_j$ in $M$, i.e, we have built smooth embeddings $\alpha_1,\dots,\alpha_j:\R\to M$ and Hamiltonian homeomorphisms $\theta_1,\dots,\theta_j$, such that the following holds:
\begin{enumerate}
\item $\lim_{t\to-\infty}\alpha_i(t)=p_i$ and $\lim_{t\to+\infty}\alpha_i(t)=q_i$,
\item For all $i\in\{1,\dots,j\}$ and all $t\in\R$, $\theta_i\circ\phi_H^1(\alpha_i(t))=\alpha_i(t+1)$,
\item $\theta_i\circ\phi_H^1$ has the same fixed points as $\phi_H^1$,
\item For all $i\in\{1,\dots,j\}$, $\alpha_i$ takes values in $V_{p_i}\cup \m C(\gamma_i((0,1)))\cup V_{q_i}$, and the images of the $\alpha_i$'s are pairwise disjoint, 
\item For all $i\in\{1,\dots,j\}$, $\theta_i$ is generated by a continuous Hamiltonian $F_i$ supported in $\{p_i\}\cup V_{p_i}\cup \m C(\gamma_i((0,1)))\cup V_{q_i}\cup \{q_i\}$, and the interiors of the supports of the $\theta_i$'s are pairwise disjoint,
\item $d_{C^0}(\theta_i,\id)<\eps$ and $\|F_i\|_{\infty}<\rho$.
\item For any neighborhoods of $p_i,q_i$, the homeomorphism $\theta_i$ coincides with a Hamiltonian diffeomorphism in the complement of the union of those neighborhoods. 
\end{enumerate}

We want to show that it is possible to build $\alpha_{j+1}$ and $\theta_{j+1}$ such that the above properties 1-7 still hold. After shrinking the open sets $V_{p_{j+1}}$, $V_{q_{j+1}}$ if needed so that they intersect neither the supports of $F_1,\dots,F_j$ nor the images of the curves $\alpha_1,\dots,\alpha_j$, we construct $\alpha_{j+1}$ and $\theta_{j+1}$ by applying Theorem \ref{theo:invariant_curve} below to the critical points $p_{j+1}$, $q_{j+1}$, to the curve $\gamma_{j+1}$ and to the open subsets $V_{p_{j+1}}$, $V_{q_{j+1}}$.

Following this process by induction, we get curves $\alpha_1,\dots,\alpha_N$ and Hamiltonian homeomorphisms $\theta_1,\dots,\theta_N$ satisfying Properties 1-7. The union of the images $\alpha_i$, denoted by $T$, is an embedding of the tree $\m T$ in $M$ in the sense of Definition \ref{def:tree}. Moreover, it is easy to check that the Hamiltonian homeomorphism $\psi=\theta_N\circ\dots\circ\theta_1\circ\phi_H^1$ meets the requirements of Proposition \ref{prop:invariant_tree_2}.
\end{proof}

\subsection{Connecting two critical points}\label{sec:invariant_curve}
  The goal of this section is to explain the construction of the individual edges of the invariant tree of Proposition \ref{prop:invariant_tree_2}. This is achieved in Theorem \ref{theo:invariant_curve}.  
  
\subsubsection{Statement of the result}

\begin{theo}\label{theo:invariant_curve}
Let $H$ be a Morse function as described in Lemma \ref{lemma:initial-morse-function} and denote by $p,q$ two critical points of $H$, at most one of them being an extremum. 
We assume that there exists a smooth embedded curve $\gamma:[0,1]\to M$ such that
\begin{itemize}
\item $\gamma(0)=p$ if $p$ is a maximum of $H$, and $\phi_H^t(\gamma(0))\to p$ when $t$ goes to $-\infty$ if $p$ is not a maximum,
\item $\gamma(1)=q$ if $q$ is a minimum of $H$, and $\phi_H^t(\gamma(1))\to q$ when $t$ goes to $+\infty$ if $q$ is not a minimum,
\item for all $t\in[0,1]$, $\frac{d}{dt}(H(\gamma(t))) <0$.
\end{itemize}
Let $\eps,\rho>0$ be positive real numbers,  let $V_p,V_q$ be open sets such that
$$\begin{cases}
V_p =\emptyset, \quad  &\text{ if }p\text{ is a maximum,}\\
V_p \supset \{\phi_H^t(\gamma(0))\,:\,-\infty<t\leq 1\}, \quad&\text{ if }p\text{ is not a maximum,}
\end{cases}
$$
and
$$\begin{cases}
V_q =\emptyset, \quad  &\text{ if }q\text{ is a minimum,}\\
V_q \supset \{\phi_H^t(\gamma(1))\,:\,-1\leq t<+\infty\}, \quad&\text{ if }q\text{ is not a minimum.}
\end{cases}
$$
Then, there exist $\theta \in \Hameo(M,\omega)$ and a smooth embedded curve $\alpha: \R \rightarrow M$, with the following properties:
\begin{enumerate}
\item $\lim_{t \to -\infty} \alpha(t) = p$ and $\lim_{t \to +\infty} \alpha(t) = q$,

\item $\theta \circ\phi^1_H (\alpha(t)) = \alpha(t+1)$ for all $t \in \R$,

\item $\theta \circ \phi^1_H$ has the same set of fixed points as $\phi_H^1$,

\item Fot all $t\in\R$, $\alpha(t)$ belongs to the open set $V_p\cup \m C(\gamma((0,1)))\cup V_q$,

\item $\theta$ is generated by a continuous Hamiltonian $F$ which is supported in 
$\{p\}\cup V_p\cup \m C(\gamma((0,1)))\cup V_q\cup\{q\}$.

\item $d_{C^0}(\id, \theta) < \eps$ and $\|F\|_{\infty}<\rho$. 

\item For any neighborhoods of $p,q$, the homeomorphism $\theta$ coincides with a Hamiltonian diffeomorphism in the complement of the union of those neighborhoods. 

\end{enumerate}
\end{theo}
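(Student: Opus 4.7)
The strategy is a discretize-and-correct scheme. First I pick a very fine subdivision $0 = s_0 < s_1 < \cdots < s_N = 1$ so that each arc $\gamma|_{[s_i, s_{i+1}]}$ and its image under $\phi_H^1$ both lie in a Darboux ball $B_i$ on which $\omega = d\lambda_i$, and are $C^0$-close there; this is possible since $\phi_H^1$ is $C^1$-close to the identity when $H$ is $C^2$-small. I set $\alpha(i) := \gamma(s_i)$ for $0 \leq i \leq N$, and the global map $\theta$ will be built as a composition of local Hamiltonian diffeomorphisms $\phi_{F_n}^1$ indexed by $n \in \Z$, with each $F_n$ produced by the quantitative $h$-principle (Proposition \ref{prop:h_principle}) so as to realise $\phi_{F_n}^1 \circ \phi_H^1(\alpha|_{[n, n+1]}) = \alpha|_{[n+1, n+2]}$.

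The extension of $\alpha$ to the ends depends on the nature of $p$ and $q$. If $p$ is not an extremum, the hyperbolic normal form of Lemma \ref{lemma:initial-morse-function} forces $\phi_H^{-n}(\gamma(0)) \to p$ exponentially, so I set $\alpha(-n) := \phi_H^{-n}(\gamma(0))$; these points lie in $V_p$ and satisfy $\phi_H^1(\alpha(-n)) = \alpha(-n+1)$, so $\theta$ may be taken to be the identity on a neighborhood of this tail and no $F_n$ is needed for $n \ll 0$. If on the contrary $p$ is a maximum, the elliptic normal form makes $\phi_H^1$ a small rotation around $p$ with no orbit converging to $p$, so the radial drift towards $p$ must be supplied by $\theta$ itself. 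In that case I pick $\alpha(-n)$ spiralling into $p$ along radii $r_n$ decaying geometrically in a Darboux chart, with angular coordinates adjusted for the rotation of $\phi_H^1$, so that successive $\alpha(-n)$ fall into nested shrinking balls. A symmetric prescription treats $q$. Joining consecutive $\alpha(n)$'s by short smooth embedded arcs inside $V_p \cup \m C(\gamma((0,1))) \cup V_q$ yields the embedded curve $\alpha \co \R \to M$ required by properties (1) and (4).

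For each $n \in \Z$ I then apply Proposition \ref{prop:h_principle} to the pair $\phi_H^1 \circ \alpha|_{[n, n+1]}$ and $\alpha|_{[n+1, n+2]}$: they share endpoints by construction, lie in a common Darboux ball where $\omega$ is exact (so the symplectic area obstruction vanishes, the relevant class in $\pi_2$ being trivial), and are $C^0$-close at a controlled scale $\eps_n$. The proposition supplies a Hamiltonian $F_n$ with $\|F_n\|_\infty \leq \rho_n$, supported in a $2\eps_n$-neighborhood of $\phi_H^1 \circ \alpha|_{[n, n+1]}$, whose time-one flow realises the equality and has $C^0$-norm at most $2\eps_n$. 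I choose $\rho_n, \eps_n \to 0$ geometrically as $|n| \to \infty$, so that supports are pairwise disjoint and nest into $\{p\}$ and $\{q\}$ at the two ends, and so that $\sup_n \rho_n < \rho$ and $\sup_n 2\eps_n < \eps$. Setting $\theta := \lim_{N \to \infty} \phi_{F_{-N}}^1 \circ \cdots \circ \phi_{F_{N}}^1$, the disjointness of supports forces uniform convergence to a homeomorphism; the generator $F = \sum_n F_n$ is well-defined pointwise, continuous (since $\|F_n\|_\infty \to 0$ and the supports accumulate only at $\{p,q\}$), and satisfies $\|F\|_\infty < \rho$. Property (7) follows because only finitely many $F_n$ are supported outside any prescribed neighborhood of $\{p, q\}$. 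No new fixed points appear (property (3)) because outside all supports $\theta \circ \phi_H^1 = \phi_H^1$ has exactly the Morse critical set as fixed set; inside any middle support, the uniform lower bound on $d(\phi_H^1(x), x)$ away from critical points exceeds the $C^0$-size of $F_n$ by our choice of $\rho_n$; and near $p$ or $q$ the explicit spiralling dynamics preclude extra fixed points.

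The main obstacle is the construction near an extremum. There one is forced to carry out infinitely many $h$-principle corrections accumulating at $p$ (or $q$), and must simultaneously guarantee: (i) the supports can be nested without mutual overlap, which is where $\dim M \geq 4$ is essential, since in dimension two the required radial drift is obstructed by area preservation while in higher dimensions Proposition \ref{prop:h_principle} provides the necessary flexibility; (ii) the scales $r_n$, $\eps_n$, $\rho_n$ can be chosen compatibly so that the infinite composition converges uniformly to a homeomorphism with a genuinely continuous generator; (iii) the limiting map is a homeomorphism rather than a mere continuous self-map, which uses the same support-nesting estimate to control $\theta^{-1}$. All three are tractable because Proposition \ref{prop:h_principle} allows $\rho$ and $\eps$ to be taken arbitrarily small, independently, at each stage.
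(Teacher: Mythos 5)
Your scheme for the middle section has a genuine gap that breaks the argument. You set $\alpha(n) = \gamma(s_n)$ for $0 \leq n \leq N$ and propose to apply Proposition \ref{prop:h_principle} to the pair $\phi_H^1 \circ \alpha|_{[n,n+1]}$ and $\alpha|_{[n+1,n+2]}$, asserting that ``they share endpoints by construction.'' They do not: the first curve runs from $\phi_H^1(\gamma(s_n))$ to $\phi_H^1(\gamma(s_{n+1}))$, the second from $\gamma(s_{n+1})$ to $\gamma(s_{n+2})$. Matching the left endpoints would require $\phi_H^1(\gamma(s_n)) = \gamma(s_{n+1})$, which can hold only if the points $\gamma(s_n)$ form a $\phi_H^1$-orbit; but $\phi_H^1$ preserves level sets of $H$, whereas the hypothesis $\frac{d}{dt}H(\gamma(t)) < 0$ forces the $\gamma(s_n)$ to sit on \emph{distinct} level sets. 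Moreover, the gap $d(\phi_H^1(\gamma(s_n)), \gamma(s_{n+1}))$ does not shrink as you refine the subdivision: it is bounded below by roughly $|X_H(\gamma(s_n))| - O(1/N)$, and since $\gamma$ avoids critical points, this stays bounded away from zero independently of $N$. Thus the hypotheses of Proposition \ref{prop:h_principle} are never satisfied in the middle region, and there is no Hamiltonian $F_n$ with the properties you claim. This is precisely the difficulty that forces the paper into the elaborate construction of Theorem \ref{theo:connecting-non-critical}: one uses Poincar\'e recurrence to choose points $x_i$ and long return times $k_i$ so that $\phi_H^{k_i}(x_i)$ lands back in a tiny ball near $x_i$, and only then is a $C^0$-small correction $\tau$ available that shifts the orbit laterally to $x_{i+1}$; drift across level sets is bought at the price of long flow times, not by fine subdivision.

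A second, independent gap concerns the action obstruction. You write that the symplectic area condition in Proposition \ref{prop:h_principle} ``vanishes'' because the curves lie in a Darboux ball where $\omega$ is exact. Exactness of $\omega$ does not make the obstruction vanish: for two curves $\gamma_0, \gamma_1$ with the same endpoints in a ball where $\omega = d\lambda$, the area of the homotopy equals $\int \gamma_1^*\lambda - \int \gamma_0^*\lambda$, which is a genuine real number that a Hamiltonian isotopy fixing the endpoints cannot alter. The $h$-principle can move one curve onto the other only after this $\lambda$-action difference has been killed by a preliminary perturbation of the target curve; this is Step 2 (Claim \ref{cl:adjust_action}) in the proof of Proposition \ref{prop:h_principle} and Step 4 of Theorem \ref{theo:connecting-non-critical}, and it must be built into the argument. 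Your spiralling construction near the extremal critical point, while morally on the right track (it resembles the paper's Theorem \ref{theo:invariant_curve_max_to_nearby}), also suppresses both issues -- the endpoint correction between consecutive spiral arcs and the action adjustment -- and would need to confront them just as the paper does.
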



One of the main difficulties we will have to face in proving Theorem \ref{theo:invariant_curve} is to perform perturbations of $\phi^1_H$ without creating new fixed points. Away from the fixed points of $\phi_H^1$, i.e. critical points of $H$, a $C^0$-small perturbation will not create such fixed points. However, a $C^0$--small perturbation near a critical point can create new fixed points, and for this reason neighborhoods of critical points will require special treatment. In order to surmount these difficulties, we will build the perturbation and the invariant curve of Theorem \ref{theo:invariant_curve} in three steps: First, we build one end of the curve near one of the two critical points (this construction is achieved in Section \ref{section:near-extrema} in the case of a local minimum/maximum). We then extend the invariant curve such that it reaches a sufficiently small neighborhood of the second critical point (Section \ref{section:connecting-2-points}). Finally, we finish the construction of the invariant curve in the neighborhood of the second critical point (Section \ref{section:proof-connecting-2-critical}).

\subsubsection{Connecting a max/min to a nearby point}\label{section:near-extrema}
We denote by $\R^{2n}$ the Euclidean space of dimension $2n$ equipped with the standard symplectic structure $\omega_0 = \sum_{i=1}^{n} dx_i\wedge dy_i$.  In the following theorem, $0$ denotes the origin in $\R^{2n}$, and $\|x\|$ denotes the Euclidean norm of a point $x \in \R^{2n}$.

\begin{theo}\label{theo:invariant_curve_max_to_nearby}
Let $H:  \R^{2n} \rightarrow \R$ be a  Hamiltonian of the form $H =  c \sum_{i=1}^{n}(x_i^2 +y_i^2)$, where $c$ is a non-zero constant.  There exists $A >0$ such that if $|c| < A$, then the following statement holds for any point $x \in \R^{2n}$ and any $\eps, \delta >0$. 

Denote by $B \subset \R^{2n}$ an open ball which is centered at the origin and contains the point $x$.  There exists a Hamiltonian homeomorphism $\theta$, whose support is compactly contained in $B$, and a smooth injective immersion  $\alpha: [-1, \infty) \rightarrow B \setminus\{0\}$  with the following properties:
\begin{enumerate}
\item $\alpha(t) =\phi^t_H(x)$ for $t \in [-1, 0]$ and $\alpha(t) \to 0$ as $t\to \infty$, where $0$ denotes the origin,
\item $\theta \circ \phi^1_H (\alpha(t)) = \alpha(t+1)$ for all $t \in [-1, \infty)$,
\item $\theta \circ \phi^1_H$ has only one fixed point and that is the origin,

\item  The support of $\theta$ intersects the complement of the ball $B(0,\|x\|)$ of radius $\|x\|$ only in an $\eps$-neighborhood of $\{\phi_H^t(x)\,:\,t\in[1,2]\}$,


%
%

\item $d_{C^0}(Id, \theta) < \eps$ and $\|F\|_{\infty} \leq \delta$, where $F$ denotes a continuous Hamiltonian  such that $\phi^1_F = \theta$,

\item  For any neighborhood of $0$, there exists a compactly supported Hamiltonian diffeomorphism of $B$ which coincides with $\theta$ in the complement of that neighborhood.

\end{enumerate}
\end{theo}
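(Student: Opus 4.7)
Plan. The construction is iterative, built piecewise via repeated applications of the quantitative $h$-principle (Proposition \ref{prop:h_principle}). After a unitary change of coordinates I may assume $x=(r_0,0,\ldots,0)$ lies in the first symplectic plane. I define $\alpha(t):=\phi^t_H(x)$ on $[-1,1]$, so that condition (2) automatically forces $\theta$ to be the identity on the circle arc $\{\phi_H^s(x):s\in[0,1]\}$. The first non-trivial piece of $\theta$ will therefore act near $\{\phi_H^s(x):s\in[1,2]\}$, an arc of the sphere of radius $r_0$; this accounts for the shape of the support condition (4).

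Inductively, for each $k\geq 1$, I choose an arc $\alpha_k:=\alpha|_{[k,k+1]}$ lying strictly closer to the origin and produce a compactly supported smooth Hamiltonian $F_k$ whose time-1 map carries $\phi^1_H(\alpha_{k-1})$ onto $\alpha_k$. The natural arcs $\gamma_0:=\phi_H^1(\alpha_{k-1})$ and $\gamma_1:=\alpha_k$ do not, however, satisfy the hypotheses of Proposition \ref{prop:h_principle} directly: their endpoints lie at different radii (a strict inward spiral cannot have $\phi_H^1(\alpha(k-1))=\alpha(k)$, since the flow preserves radii), and the loop $\gamma_1\#\overline{\gamma_0}$ can bound nonzero symplectic area. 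The hypothesis $n\geq 2$ enters crucially here: I endow $\alpha_k$ with small ``hooks'' into the second symplectic plane near each integer endpoint, which make $\gamma_0$ and $\gamma_1$ coincide in neighborhoods of their endpoints; the same second plane supplies a bounding $2$-disk of zero symplectic area (a contribution in the second plane cancels a nonzero contribution coming from the first). Proposition \ref{prop:h_principle} then yields $F_k$ with $\|F_k\|_\infty\leq\rho_k$ and $d_{C^0}(\phi^1_{F_k},\mathrm{Id})<2\eps_k$, supported in the $2\eps_k$-neighborhood of $\gamma_0$.

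The iteration is run with parameters $\eps_k,\rho_k\to 0$ fast enough that (i) the supports of the $F_k$ are pairwise disjoint (achievable by making the spiral descend to $0$ faster than the tube widths $\eps_k$ shrink), (ii) $\sum_k d_{C^0}(\phi^1_{F_k},\mathrm{Id})<\eps$, and (iii) $\sup_k\|F_k\|_\infty\leq\delta$. The disjoint supports allow me to set $F:=\sum_k F_k$, a continuous Hamiltonian (smooth on $\R^{2n}\setminus\{0\}$), and $\theta:=\phi^1_F$, a Hamiltonian homeomorphism that equals $\phi^1_{F_k}$ on $\mathrm{supp}(F_k)$ and the identity elsewhere; properties (1), (2), (4), (5), (6), (7) are then immediate. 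For property (3), outside $\bigcup_k \mathrm{supp}(F_k)$ the map $\theta\circ\phi^1_H$ coincides with $\phi^1_H$, whose only fixed point is the origin; inside $\mathrm{supp}(F_k)$, $\phi^1_H$ displaces every point by at least a positive constant times $|c|r_k$ (where $r_k$ is the local radius of $\alpha_k$), while $\theta$ displaces by at most $2\eps_k$, so choosing $\eps_k\ll|c|r_k$ at each stage rules out fixed points. The threshold $A$ on $|c|$ is what guarantees that the hook construction and the disjointness of supports are simultaneously feasible.

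The principal obstacle is the coupled endpoint-matching and zero-area adjustment required to apply the $h$-principle at each stage: without the extra symplectic plane, a spiral in the first plane would have mismatched endpoints and enclose nonzero area, blocking Proposition \ref{prop:h_principle} outright. Once the hooks are in place, the rest is a convergence argument in which the rapidly decaying $h$-principle parameters do all the work.
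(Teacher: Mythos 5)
There is a genuine gap in the endpoint-matching step, which no amount of "hooking" into the second symplectic plane can repair. You propose to apply Proposition~\ref{prop:h_principle} directly to $\gamma_0:=\phi^1_H\circ\alpha|_{[k-1,k]}$ and $\gamma_1:=\alpha|_{[k,k+1]}$. Hypothesis (i) of that proposition demands that $\gamma_0$ and $\gamma_1$ \emph{coincide} near $t=0$ and $t=1$; in particular $\phi^1_H(\alpha(k-1))=\alpha(k)$ and $\phi^1_H(\alpha(k))=\alpha(k+1)$. But $\phi^t_H$ is a linear isometry of $\R^{2n}$ (a simultaneous rotation in every symplectic 2-plane, since $H=c\sum(x_i^2+y_i^2)$), so it preserves the full Euclidean norm $\|\cdot\|$, not merely the first-plane part. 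The required endpoint identifications therefore force $\|\alpha(k-1)\|=\|\alpha(k)\|=\|\alpha(k+1)\|$ for all $k$, which is incompatible with $\alpha(t)\to 0$. A hook into the second plane changes the direction of the endpoint but not its norm, so it cannot make $\gamma_0(0)$ and $\gamma_1(0)$ coincide. (The second-plane idea is sound for the zero-area requirement of hypothesis (ii), and for keeping the curve embedded, but it does nothing for the norm obstruction.) This is not a technicality: it is precisely the step that the $h$-principle alone cannot do, because the $h$-principle applies to curves with matching ends, and $\phi^1_H$ of a spiral that descends can never have matching ends with the next piece of the spiral.

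The paper closes exactly this gap by inserting an auxiliary Hamiltonian $G=\sum_i G_i$, with each $G_i$ supported in a small convex neighborhood $W_i$ of the short radial segment joining $y_i=\phi^1_H(x_i)$ to $x_{i+1}=\rho\,\phi^1_H(x_i)$, and satisfying $\phi^1_{G_i}(\phi^t_H(y_i))=\phi^t_H(x_{i+1})$ for $|t|\leq\kappa$. This $\phi^1_G$ performs the radial drop, and the $h$-principle is then applied to $\psi_1\circ\alpha|_{[k,k+1]}$ with $\psi_1=\phi^1_G\circ\phi^1_H$ and $\alpha|_{[k+1,k+2]}$, whose endpoints now genuinely match (Claim~\ref{cl:coincide_endpoints}). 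The action adjustment is carried out by a separate $C^0$-small perturbation of the curve near the midpoints $t=i+\tfrac12$ (not via second-plane hooks, though that alternative could plausibly work for the area), and the resulting corrections $K_i$ are interleaved into three disjointly supported families $F_0,F_1,F_2$, giving $\theta=\phi^1_{F_2}\circ\phi^1_{F_1}\circ\phi^1_{F_0}\circ\phi^1_G$. Your convergence argument, your disjoint-supports bookkeeping, and your fixed-point-freeness comparison of $\|\phi^1_H(z)-z\|\gtrsim|c|\,\|z\|$ against the $\eps_k$-displacements are all in the spirit of the paper and would go through once the $G$-type radial correction is inserted; without it, the iteration cannot get started.
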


In the course of the proof of the above theorem, we will need the following standard fact, which follows from Proposition \ref{prop:h_principle} and hence will not be proven here.  
 
 \begin{lemma} \label{lemma:small_Ham_map_curves}
Denote by $(W, \omega)$ an exact symplectic manifold of dimension at least 4.  Suppose that $\gamma_0, \gamma_1 :[0,1] \rightarrow W$ are two curves such that
\begin{enumerate}
\item[i.] $\gamma_0$ and $\gamma_1$  coincide at $t=0$ and $t=1$,
\item[ii.]  $\int_0^1 \gamma_0^* \lambda = \int_0^1 \gamma_1^* \lambda$ where $\lambda$ is any 1-form such that $\omega = d \lambda$, 
\item[iii.] there exists a homotopy, rel. end points, from $\gamma_0$ to $\gamma_1$.
\end{enumerate}

  Then, for any $\rho >0$, there exists a compactly supported Hamiltonian $F$, generating a Hamiltonian isotopy $\varphi^s: W \rightarrow W, \; s \in [0,1]$ such that
  \begin{enumerate}
   \item  $F$ vanishes near the extremities of $\gamma_0$ and $\gamma_1$, 
   \item  $\varphi^1 \circ \gamma_0 = \gamma_1$,
   \item $\|F\|_{\infty} < \rho$.
  \end{enumerate}
\end{lemma}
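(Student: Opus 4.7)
\textbf{Proof proposal for Lemma \ref{lemma:small_Ham_map_curves}.} The strategy is to reduce to Proposition \ref{prop:h_principle} by chopping the homotopy into many small steps so that each step has small diameter, and then summing the $\|\cdot\|_{\infty}$-errors. Concretely, I would first observe that the exactness hypothesis subsumes the area condition of Proposition \ref{prop:h_principle}: if $\omega = d\lambda$ and $h : [0,1]^2 \to W$ is a homotopy rel. endpoints from $\gamma_0$ to $\gamma_1$, then by Stokes the symplectic area of the disc $h$ equals $\int_0^1 \gamma_1^*\lambda - \int_0^1\gamma_0^*\lambda$, which vanishes by hypothesis (ii). Moreover, the same Stokes computation shows that for any two slices $h(s,\cdot)$ and $h(s',\cdot)$, the homotopy between them (the restriction of $h$) also bounds a disc of zero symplectic area.

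Next, I would perform a preliminary normalization: using a $C^\infty$-small Hamiltonian isotopy supported near the endpoints (and hence with arbitrarily small $\|\cdot\|_\infty$-norm), I would perturb $\gamma_0$ and $\gamma_1$ so that they coincide on $[0,\delta]\cup[1-\delta,1]$ for some small $\delta$, are smoothly embedded, and still admit a homotopy rel.\ endpoints between them (by pre- and post-composing the given $h$ with these small isotopies). Then, fixing an integer $N$ to be chosen large, I would pick a subdivision $0=s_0<s_1<\cdots<s_N=1$ fine enough that each restricted homotopy $h|_{[s_i,s_{i+1}]\times[0,1]}$ has all trajectories $s\mapsto h(s,t)$ of diameter less than some $\varepsilon_N>0$. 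Using the weak Whitney immersion theorem (applicable since $\dim W\geq 4$), I would perturb the intermediate slices $h(s_i,\cdot)$ into smooth embedded curves $\gamma^{(i)}$, with $\gamma^{(0)}=\gamma_0$, $\gamma^{(N)}=\gamma_1$, consecutive $\gamma^{(i)}$ and $\gamma^{(i+1)}$ coinciding near $t=0,1$, and each consecutive pair still connected by a rel.\ endpoints homotopy of diameter less than $2\varepsilon_N$.

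Now I would apply Proposition \ref{prop:h_principle} to each consecutive pair $(\gamma^{(i)}, \gamma^{(i+1)})$ with parameters $\varepsilon_N$ (in the role of $\varepsilon$) and $\rho/N$ (in the role of $\rho$). This yields a compactly supported Hamiltonian $F_i$ whose time-one flow $\varphi_i := \phi^1_{F_i}$ sends $\gamma^{(i)}$ to $\gamma^{(i+1)}$, vanishes near the common endpoints, and satisfies $\|F_i\|_\infty\leq \rho/N$. Composing, $\varphi := \varphi_{N-1}\circ\cdots\circ\varphi_0$ sends $\gamma_0$ to $\gamma_1$; writing it as the time-one map of $F := F_{N-1}\#\cdots\# F_0$, the submultiplicativity $\|F\#G\|_\infty\leq \|F\|_\infty + \|G\|_\infty$ gives $\|F\|_\infty\leq\rho$. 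By construction each $F_i$ vanishes near the common endpoints, so $F$ vanishes near the extremities of $\gamma_0$ and $\gamma_1$, verifying all three conclusions of the lemma.

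The only substantive obstacle is the transversality/perturbation step producing the smooth embedded intermediate curves $\gamma^{(i)}$: one must arrange them to be smoothly embedded and to come with small-diameter rel.\ endpoints homotopies to their neighbors, without disturbing the equal-action property. The first two conditions follow from general position in dimension at least $4$ (as already used in the proof of Proposition \ref{prop:h_principle}), and the third is automatic because $W$ is exact and we only do small local modifications that do not enclose any symplectic area.
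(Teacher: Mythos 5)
The paper does not give a proof of this lemma; it simply remarks that it ``follows from Proposition~\ref{prop:h_principle}.'' So your task was to supply the reduction, and your proposal does head in the right direction, but it contains a genuine error and also a layer of unnecessary machinery.

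The error is in the sentence claiming that ``the same Stokes computation shows that for any two slices $h(s,\cdot)$ and $h(s',\cdot)$, the homotopy between them (the restriction of $h$) also bounds a disc of zero symplectic area.'' That is false. Stokes' theorem, applied to the rectangle $[s,s']\times[0,1]$, shows that the symplectic area of the restricted homotopy equals $\int_0^1 h(s',\cdot)^*\lambda - \int_0^1 h(s,\cdot)^*\lambda$, i.e.\ the \emph{action difference} between the two slices. Hypothesis~(ii) only forces this to vanish for the pair $(\gamma_0,\gamma_1)$, not for arbitrary intermediate pairs: a generic homotopy will sweep area across its interior slices even if the total signed area is zero. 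Consequently, after you chop the homotopy into $N$ pieces, the consecutive curves $\gamma^{(i)},\gamma^{(i+1)}$ need not have equal $\lambda$-actions, and you cannot apply Proposition~\ref{prop:h_principle} to these pairs without first adjusting the actions of the intermediate slices (which is possible via small local perturbations, but is exactly the kind of correction you have declared ``automatic'').

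The chopping itself is also unnecessary. Reread conclusion~3 of Proposition~\ref{prop:h_principle}: the bound $\|F\|_\infty\le\rho$ holds for \emph{any} $\rho>0$, independently of $\eps$; the parameter $\eps$ only controls the $C^0$-size of the isotopy and the size of the support, neither of which are demanded by Lemma~\ref{lemma:small_Ham_map_curves}. So the cleaner route is: (a) use a $C^\infty$-small Hamiltonian perturbation (of arbitrarily small Hofer norm, supported near the endpoints) to make $\gamma_0,\gamma_1$ smooth embedded and coincident on $[0,\delta]\cup[1-\delta,1]$; (b) choose $\eps$ large enough that the given homotopy automatically has trajectories of diameter $<\eps$; (c) observe, as you correctly did, that exactness of $\omega$ plus hypothesis~(ii) of the lemma force the homotopy disc to have zero symplectic area; (d) apply Proposition~\ref{prop:h_principle} \emph{once} with this $\eps$ and the given $\rho$. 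Conclusions 1, 2, and the $\|F\|_\infty\le\rho$ part of conclusion~3 of the proposition are exactly conclusions 1, 2, 3 of the lemma. This avoids both the spurious subdivision and the unjustified equal-action claim for intermediate slices.
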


\medskip

\begin{proof}[Proof of Theorem \ref{theo:invariant_curve_max_to_nearby}]
Without loss of generality we may assume that $|x| =1$, where $|\cdot|$ denotes the standard Euclidean norm.
  We remark here that throughout the proof we will use the fact that $\phi^t_H$, for each $t$, is a linear isometry of $\R^{2n}$, without explicitly mentioning it.  
We will assume that $\eps >0$ is very small in comparison to $|\phi^1_H(x) - x|$.

We will now pick a sequence of curves $\alpha_0, \alpha_1, \ldots,$ which will be joined together, at a later stage, to form the invariant curve $\alpha$.  Let $x_0 = x$ and define $\alpha_0 :[0,1] \rightarrow B$, $t\mapsto  \phi^t_H(x_0)$.  Let $0 < \rho <1$ be a constant such that $1- \rho$ is very small in comparison to $\eps$.  Now, let $x_1 = \rho\, \phi^1_H(x_0)$ and more generally, for  each $i\geq 1, x_i = \rho^i \phi^i_H(x_0)$ and let $\alpha_i:[i, i+1] \rightarrow B$ denote the curve 
$\alpha_i(t) = \rho^i \phi^t_H(x_0)$ for each $t\in[i,i+1].$ Denote $y_i := \phi^1_H(x_i)$.  Note that $\alpha_i$ satisfies the following identity: 
\begin{equation}\label{eq:alphai}
\alpha_i(t) = \phi^{t-i}_H(x_i) = \phi^{t-i-1}_H(y_i), \; \forall t\in[i,i+1].
\end{equation}

\vspace{.2cm}

\begin{figure}[h!]
\centering
\def\svgwidth{0.6\textwidth}
{\small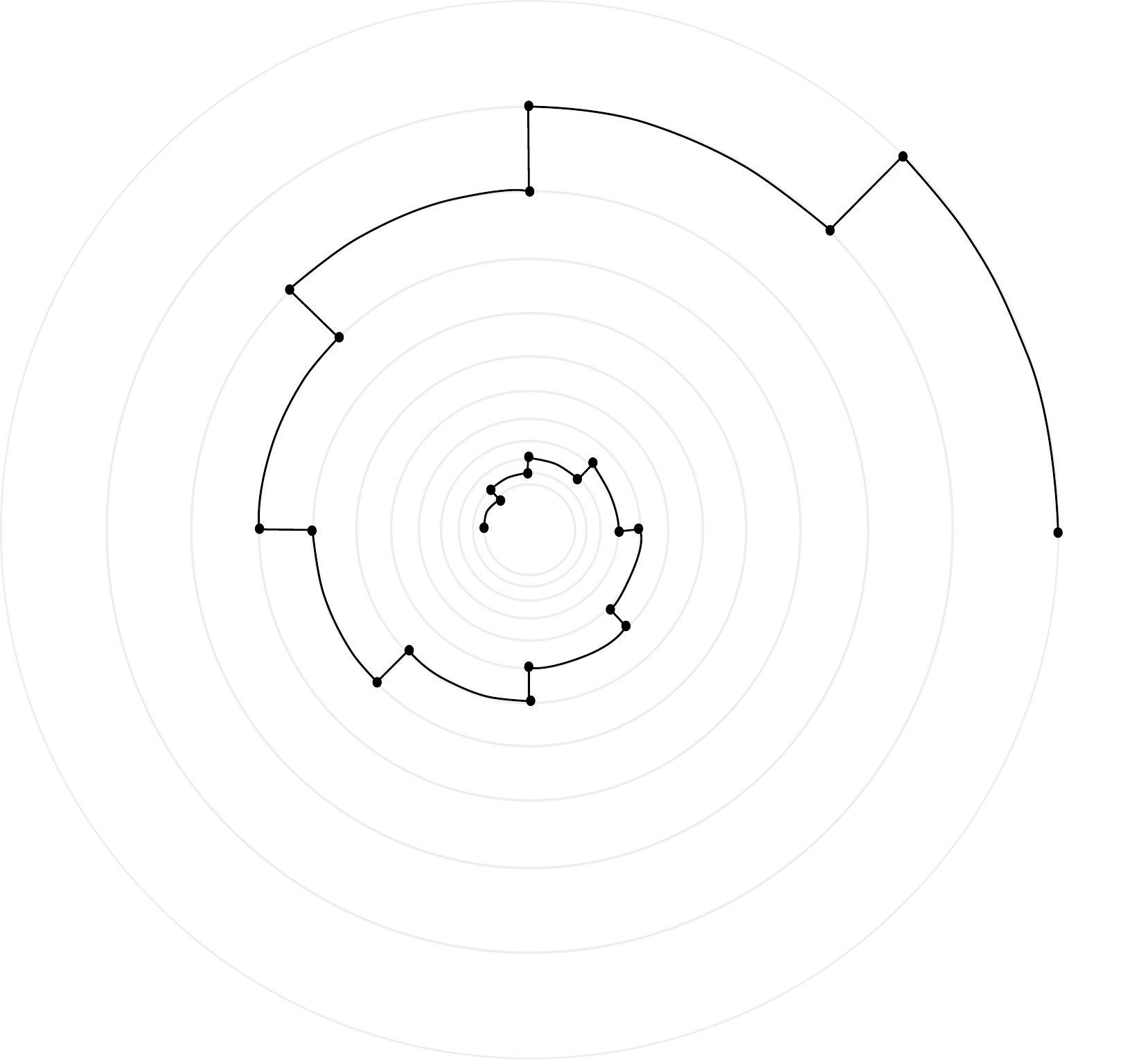}
\caption{Settings for the beginning of the proof of Theorem \ref{theo:invariant_curve_max_to_nearby}.}
\label{fig:Elliptic1}
\end{figure}


\noindent \textbf{Step 1. Preliminary preparations  for the construction of the invariant curve.} 

For each non-negative integer $i$, let $a_i:[0,1] \rightarrow B$  be the curve $a_i(t) := (1-t)y_i + t x_{i+1}$.  Observe that, the length of $a_i$ is $\rho^i - \rho^{i+1}.$ Note that $\rho^i - \rho^{i+1} < \rho^i \eps.$  Furthermore, the image of $a_i$ is disjoint from the remaining $ a_j $'s and is contained in the shell $\{z \in \R^{2n}: \rho^{i+1} \leq |z| \leq \rho^i\}$. See Figure \ref{fig:Elliptic1}.

We will now introduce some of the notation which will be used throughout the proof.  Let $U_0$ be a shell which is a slight enlargement of the shell $\{z \in \R^{2n}: \rho \leq |z| \leq 1\}$.  For $i >0$, we set $U_i:= \rho^i U_0$.  Note that $U_i$ is a shell which is a slight enlargement of  the shell $\{z \in \R^{2n}: \rho^{i+1} \leq |z| \leq \rho^i\}$ and furthermore, $U_i \cap U_j = \emptyset$ if $j \notin \{i-1, i, i+1\}$.  

Next, let $W_0$ be a small neighborhood of the image of the curve $a_0$ which is contained in  $U_0$ and has the following properties:  the diameter of $W_0$ is less than $ \eps$, $W_0$ is convex,  $W_0 \cup \rho W_0$ is also convex, and lastly, there exists a small positive number  $\kappa$ such that  $\phi^t_H(y_0), \phi^t_H(x_{1}) \in W_0$ if and only if $t \in (-2 \kappa, 2 \kappa)$. 
For $i >0$, we define $W_i := \rho^i \phi^i_H(W_0)$, or equivalently, $W_i = \rho \phi^1_H(W_{i-1})$.  What is important to observe about the sets $W_i$ is that they have the following properties:   $W_i$ is a small neighborhood of the image of $a_i$ which is contained in  $U_i$ and the diameter of $W_i$ is less than $ \rho^i \eps$. The sets $W_i$ are convex and so are $\phi^1_H(W_i) \cup  W_{i+1}$. 
  Because $H= c \sum(x_i^2+y_i^2)$ and $|c|\leq A$, for sufficiently small values of $A$, the $W_i$'s are pairwise disjoint and furthermore  $W_j \cap \phi^1_H(W_i) \neq \emptyset$ only if $j=i+1$. Also, note that for $t\in[-1,1]$ 
 we have $\phi^t_H(y_i), \phi^t_H(x_{i+1}) \in W_i$ if and only if $t \in (-2 \kappa, 2 \kappa)$.
Finally, we remark that among the sets $W_j$ the only one which intersects the image of $\phi^1_H\circ \alpha_i$ is $W_i$. Our set up is summarized in Figures \ref{fig:Elliptic1} and \ref{fig:Elliptic2}.


Next, we find Hamiltonians, say $G_i$, such that $G_i$ is supported in $W_i$, and 
\begin{equation}\label{eq:def_Gi} 
\phi^1_{G_i}(\phi_H^t(y_i)) = \phi^t_H(x_{i+1}), \;  \forall \;t \in [-\kappa, \kappa]. 
\end{equation}

 Let $ G: = \sum_{i=0}^{\infty}G_i$; we remark that by Lemma \ref{lemma:small_Ham_map_curves}, we can pick $G_i$ such that $\|G_i\|_{\infty}$ is arbitrarily small.  Note that the $G_i$'s are supported in $W_i$'s which are pairwise disjoint.  Therefore, $G$ is well-defined and in fact $\|G\|_{\infty}$ can be made arbitrarily small. Furthermore, $G$ generates a hameotopy whose flow is the (infinite) composition $\Pi_{i=1}^{\infty} \phi^t_{G_i}$: to see this observe that the flows $\phi^t_{G_i}$ are supported in  $W_i$'s which are disjoint and whose diameters are smaller than $\rho^i \eps$.  We point out that $\phi^1_G$ coincides with a Hamiltonian diffeomorphism of $B$ in the complement of any neighborhood of $0$.
 
 \medskip

Let $\psi_1$ denote the Hamiltonian homeomorphism $\phi^1_G \circ\phi^1_H$.

\vspace{.2cm}
\noindent \textbf{Step 2. A first approximation to  the invariant curve $\alpha$.} 
Consider the smooth curve $\alpha'(t) : [-1,\infty) \rightarrow B$ given by the following formula:  

 $$\alpha'(t):= \begin{cases}\phi_H^{t}(x) & t\in[-1,0], \\
 \phi^1_{G_{i}} \circ \alpha_i(t) \;\; & t\in [i,i+1], \; \forall i \geq 0.\end{cases}
$$

To see that the above formula defines a smooth curve one can check, using Equations \eqref{eq:alphai} and \eqref{eq:def_Gi}, $ \alpha'(t) = \phi^{t-i}_H(x_{i})$  for $t\in[i -\kappa, i + \kappa]$ for each $i\geq1$.  
One can ensure that $\alpha'$ is injective, by perturbing the Hamiltonians $G_i$, if necessary, and by picking the constant $A$, from the statement of the theorem, to be sufficiently small\footnote{ $A \leq \frac{\pi}{2}$ is sufficiently small for our purposes.}; we leave it to the reader to check the details of this.


\begin{figure}[h!]
\centering
\def\svgwidth{0.7\textwidth}
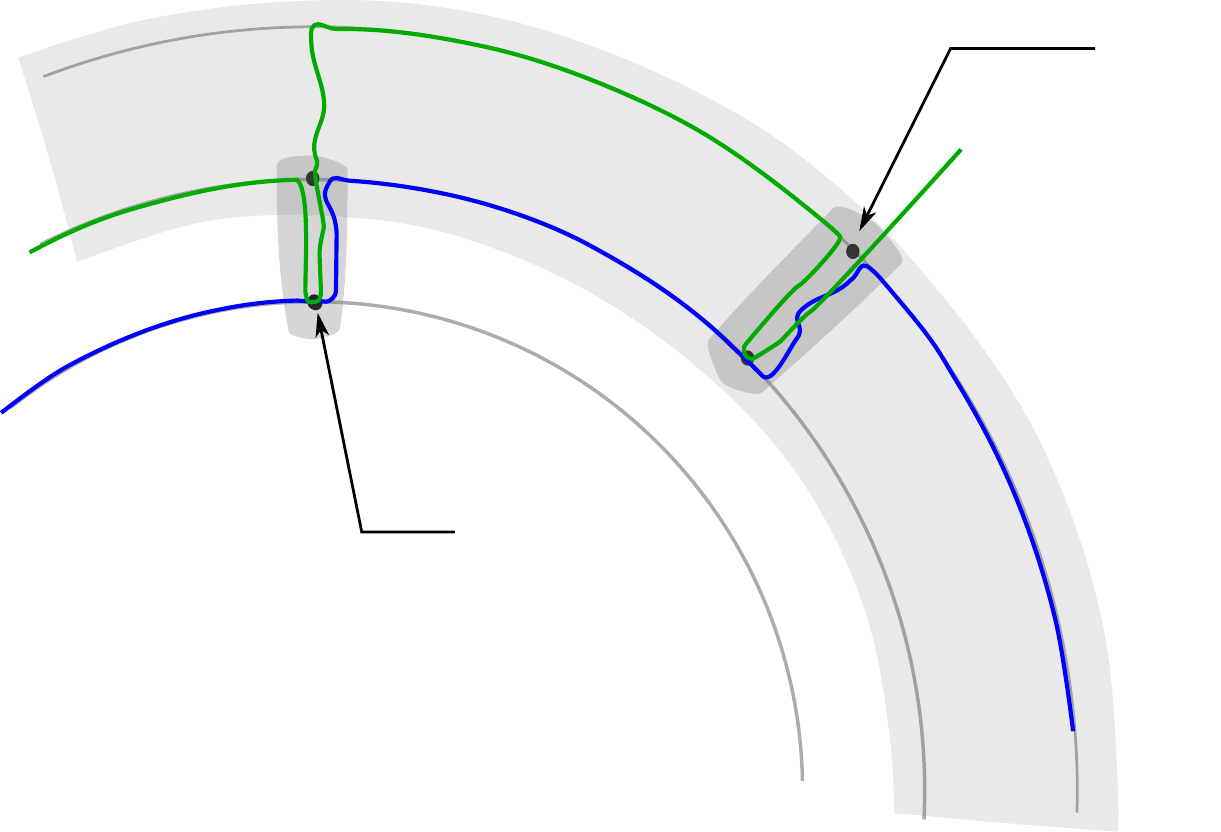
\caption{The curve $\alpha'$ in blue and its image by $\psi_1$ in green.}
\label{fig:Elliptic2}
\end{figure}

  It is evident that $\psi_1( \alpha'(t)) \neq \alpha'(t+1)$ for $t>0$. In Steps 3 and 4, we will be modifying $\alpha'$ and $\psi_1$ to establish the invariance stated in the second property from the statement of the theorem.  The next few claims  record some properties of  these two curves which will be used later in Steps 3 and 4.

\begin{claim}\label{cl:invariance_[-4,0]}
For $A$ sufficiently small, $\psi_1( \alpha'(t)) = \alpha'(t+1)$ for $t\in [-1, 0]$.  
\end{claim}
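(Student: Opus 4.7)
The claim is essentially a direct unpacking of the definitions of $\alpha'$ and $\psi_1$, combined with the geometric fact that the orbit of $x$ under $\phi_H^t$ lies on a single sphere, which is disjoint from the supports $W_j$ for $j\geq 1$. Here is the plan.

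For $t\in[-1,0]$, the first branch in the definition of $\alpha'$ gives $\alpha'(t)=\phi_H^t(x)$, so
\[
\psi_1(\alpha'(t))=\phi_G^1\circ\phi_H^1\circ\phi_H^t(x)=\phi_G^1(\phi_H^{t+1}(x)).
\]
Setting $s:=t+1\in[0,1]$ and applying the second branch with $i=0$ (using $x_0=x$), we get
\[
\alpha'(t+1)=\phi_{G_0}^1(\alpha_0(s))=\phi_{G_0}^1(\phi_H^{s}(x_0))=\phi_{G_0}^1(\phi_H^{t+1}(x)).
\]
So it suffices to show that $\phi_G^1(z)=\phi_{G_0}^1(z)$ for every $z$ of the form $\phi_H^s(x)$ with $s\in[0,1]$.

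Since the supports $W_j$ of the $G_j$ are pairwise disjoint, the flows $\phi_{G_j}^t$ commute and $\phi_G^1$ agrees with $\phi_{G_j}^1$ on $W_j$ and with the identity on the complement of $\bigcup_j W_j$. The key geometric point is that for each $s\in[0,1]$ the point $z=\phi_H^s(x)$ lies on the unit sphere, because $H=c\sum(x_i^2+y_i^2)$ is a quadratic form whose Hamiltonian flow is a linear isometry. On the other hand, by construction $W_j\subset U_j$, which is a slight enlargement of the shell $\{\rho^{j+1}\leq\|w\|\leq\rho^j\}$, so for every $j\geq 1$ the set $U_j$ (and hence $W_j$) is contained in $\{\|w\|<1\}$; this uses that the enlargement of the shells is chosen to be small, which is built into the setup for $A$ sufficiently small. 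Consequently $z\notin W_j$ for $j\geq 1$.

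Two cases remain. If $z\in W_0$, then $\phi_G^1(z)=\phi_{G_0}^1(z)$ directly. If $z\notin W_0$, then $z$ lies outside every $W_j$, so $\phi_G^1(z)=z$; and because $G_0$ is supported in $W_0$, one also has $\phi_{G_0}^1(z)=z$. Either way, $\phi_G^1(z)=\phi_{G_0}^1(z)$, proving the identity on $[-1,0]$. The only remaining loose end is the compatibility of the two branches of $\alpha'$ at $t=0$: the first gives $\alpha'(0)=x$ and the second $\phi_{G_0}^1(x)$, which coincide precisely when $x\notin W_0$; this holds because $W_0$ has diameter less than $\eps$ and lies near $a_0$, while $\eps$ is chosen very small compared with $|\phi_H^1(x)-x|$, so the distance from $x$ to $W_0$ is bounded below. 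Thus $\psi_1\circ\alpha'(t)=\alpha'(t+1)$ throughout $[-1,0]$. No serious obstacle arises — the ``main'' point is simply to verify that the orbit of $x$ does not leak into any $W_j$ with $j\geq 1$, which follows from preservation of the norm by $\phi_H^t$.
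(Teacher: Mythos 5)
Your proof is correct and follows essentially the same approach as the paper, which simply unpacks the definitions of $\psi_1$ and $\alpha'$ and replaces $\phi^1_G$ by $\phi^1_{G_0}$ on the orbit $\{\phi_H^s(x): s\in[0,1]\}$; you are merely supplying the (omitted but routine) verification that this orbit, lying on the unit sphere, misses every $W_j$ with $j\geq 1$, so that $\phi^1_G$ and $\phi^1_{G_0}$ agree there. One small inaccuracy: the role of ``$A$ sufficiently small'' is not to control the thickness of the enlarged shells $U_j$ (that is an independent choice in the construction); rather, $A$ bounds $|c|$ and hence the rotation speed of $\phi_H^t$, which the paper invokes elsewhere (e.g.\ for disjointness of the $W_i$'s and injectivity of $\alpha'$), but this misattribution has no bearing on the validity of your argument.
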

\begin{proof}

If $t \in[-1,0]$, then one can easily conclude from the definition of $\alpha'$ that we have $\psi_1(\alpha'(t)) = \phi^1_{G_0}\phi^{1}_H \phi^{t}_H(x)= \phi^1_{G_0} \phi^{t+1}_H(x) =  \alpha'(t+1).$ 
\end{proof}

\begin{claim}\label{cl:coincide_endpoints}
For each $i \geq 0$, the two curves $\psi_1 ( \alpha'|_{[i,i+1]})$ and $\alpha'|_{[i+1,i+2]}$ coincide near their endpoints.  More precisely,
$\psi_1 ( \alpha'(t)) = \alpha'(t+1)$ for $t\in [i, i+\kappa] \cup [i+1 - \kappa, i+1]$.
\end{claim}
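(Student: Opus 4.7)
My plan is a direct verification at the two endpoints by unravelling the definitions. The key ingredients I will invoke are: the formula $\alpha'(t) = \phi^1_{G_i} \circ \alpha_i(t)$ for $t \in [i,i+1]$ with $\alpha_i(t) = \phi^{t-i}_H(x_i) = \phi^{t-i-1}_H(y_i)$; the fact that $y_i = \phi^1_H(x_i)$; the defining property \eqref{eq:def_Gi} of $G_i$, namely $\phi^1_{G_i}(\phi^s_H(y_i)) = \phi^s_H(x_{i+1})$ for $s \in [-\kappa,\kappa]$; the disjointness of the supports of the $G_j$ (so that $\phi^1_G$ acts as $\phi^1_{G_j}$ on a point lying in $W_j$); and the fact that $\phi^s_H(y_i),\phi^s_H(x_{i+1}) \in W_i$ whenever $s\in(-2\kappa,2\kappa)$. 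The observation already recorded in the paper that $\alpha'(t) = \phi^{t-i}_H(x_i)$ for $t\in[i-\kappa,i+\kappa]$ (for $i\geq 1$) will tidy the ``left'' endpoint, and the analogous computation starting from $\alpha_i(t) = \phi^{t-i-1}_H(y_i)$ will handle the ``right'' endpoint.

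For the left endpoint, take $t\in[i,i+\kappa]$. Then $\alpha'(t) = \phi^{t-i}_H(x_i) = \phi^{t-i-1}_H(y_i)$ (using $y_i = \phi^1_H(x_i)$, so that $\phi^1_H(\alpha'(t)) = \phi^{t-i}_H(y_i)$). Since $t-i \in [0,\kappa] \subset (-2\kappa,2\kappa)$, the point $\phi^{t-i}_H(y_i)$ lies in $W_i$, which is disjoint from the supports of all $G_j$ for $j\neq i$; consequently
\[
\psi_1(\alpha'(t)) = \phi^1_G(\phi^{t-i}_H(y_i)) = \phi^1_{G_i}(\phi^{t-i}_H(y_i)) = \phi^{t-i}_H(x_{i+1})
\]
by \eqref{eq:def_Gi}. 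On the other hand, for $t\in[i,i+\kappa]$ one has $t+1 \in [i+1,i+1+\kappa]\subset [i+1-\kappa,i+1+\kappa]$, hence $\alpha'(t+1) = \phi^{t-i}_H(x_{i+1})$, which matches.

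For the right endpoint, take $t \in [i+1-\kappa,i+1]$. Using $\alpha_i(t) = \phi^{t-i-1}_H(y_i)$ and $t-i-1\in[-\kappa,0]$, property \eqref{eq:def_Gi} gives $\alpha'(t) = \phi^1_{G_i}(\phi^{t-i-1}_H(y_i)) = \phi^{t-i-1}_H(x_{i+1})$. Therefore $\phi^1_H(\alpha'(t)) = \phi^{t-i}_H(x_{i+1}) = \phi^{t-i-1}_H(y_{i+1})$, and since $t-i-1 \in [-\kappa,0]\subset(-2\kappa,2\kappa)$ this point lies in $W_{i+1}$, so as before
\[
\psi_1(\alpha'(t)) = \phi^1_{G_{i+1}}(\phi^{t-i-1}_H(y_{i+1})) = \phi^{t-i-1}_H(x_{i+2}).
\]
For the same range of $t$, we have $t+1\in[i+2-\kappa,i+2]$, so $\alpha'(t+1) = \phi^1_{G_{i+1}}(\alpha_{i+1}(t+1)) = \phi^1_{G_{i+1}}(\phi^{t-i-1}_H(y_{i+1})) = \phi^{t-i-1}_H(x_{i+2})$, again by \eqref{eq:def_Gi}. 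The two expressions agree, completing the verification.

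There is no real obstacle here; everything is a direct computation. The only mild subtlety is making sure each point to which we apply $\phi^1_G$ actually lies inside some $W_j$, so that only a single $G_j$ is active and the formula \eqref{eq:def_Gi} can be invoked. This is exactly what the bound $s \in (-2\kappa,2\kappa)$ was arranged for in the construction of the $W_i$, and the two cases above both use $s = t-i$ or $s = t-i-1$ lying in $[-\kappa,\kappa]$, well within that window.
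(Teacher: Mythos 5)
Your proof is correct and takes essentially the same route as the paper's: a direct unravelling of the definitions, using the identity $\alpha'(t)=\phi^{t-i}_H(x_i)$ near $t=i$, the defining relation \eqref{eq:def_Gi}, and the disjointness of the supports $W_j$ to isolate a single $G_j$. The paper just packages the computation once over the symmetric window $t\in[i-\kappa,i+\kappa]$ and then shifts the index, whereas you carry out the two endpoint windows $[i,i+\kappa]$ and $[i+1-\kappa,i+1]$ separately; the content is identical.
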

\begin{proof}
   We will check that if $t\in[i -\kappa, i + \kappa]$, then $\psi_1 \circ \alpha'(t) = \alpha'(t+1)$.  To see this write $t = i+s$, where $s\in [-\kappa, \kappa]$, and note that $\psi_1 \circ \alpha'(t) = \phi^1_G \phi^1_H \phi^{s}_H(x_{i})= \phi^1_G \phi^{s}_H(y_{i}) = \phi^1_{G_{i}} \phi^s_H(y_i) = \phi^s_H(x_{i+1}) = \alpha'(t+1).$ 
\end{proof}

 For each $i \geq 0$, let $V_i$ be the shell  $U_i \cup U_{i+1}$.  Note that $V_i \cap V_j = \emptyset$ if $j \notin \{i-2, i-1, i, i+1, i+2 \}$.

\begin{claim}\label{cl:small_homotopy}
For each $i\geq 0$, there exists a homotopy, which is constant near endpoints,  from $\psi_1 \circ \alpha'|_{[i,i+1]}$ to $\alpha'|_{[i+1,i+2]}$  which is supported in $V_i$ and furthermore, under this homotopy  the trajectory of any point of $\psi_1 \circ \alpha'|_{[i,i+1]}$ has diameter less than $2 \rho^i \eps$.  
\end{claim}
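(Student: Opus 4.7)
The plan is to produce the homotopy explicitly as a straight-line (radial) interpolation in $\R^{2n}$, piecing together two easy regimes: a ``middle'' regime where the two curves differ only in radius, and ``transition'' regimes near the endpoints where either the curves coincide outright or both lie in a common small convex set from the Step~1 construction.

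First I would unpack both curves pointwise. Using the identities $\alpha_j(t) = \rho^j \phi^t_H(x) = \phi^{t-j-1}_H(y_j)$ and the defining relation $\phi^1_{G_j}(\phi^s_H(y_j)) = \phi^s_H(x_{j+1})$ for $s \in [-\kappa,\kappa]$, together with the fact that the supports $W_j$ are mutually disjoint and $W_j$ meets $\phi^1_H(W_k)$ only when $k=j-1$, one checks that on the ``middle'' interval $t \in [i+2\kappa, i+1-2\kappa]$ the maps simplify to
\[
\psi_1 \alpha'(t) = \rho^i \phi^{t+1}_H(x), \qquad \alpha'(t+1) = \rho^{i+1} \phi^{t+1}_H(x),
\]
so both points lie on the same radial ray emanating from the origin through $\phi^{t+1}_H(x)$, at radii $\rho^i$ and $\rho^{i+1}$ respectively. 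For such $t$, define $h(s,t) = (1-s)\psi_1\alpha'(t) + s\alpha'(t+1)$; each trajectory is a radial segment of length $\rho^i - \rho^{i+1} < \rho^i \eps$, and lies entirely in the shell $U_i \subset V_i$.

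Next I would handle the two transition zones $[i, i+2\kappa]$ and $[i+1-2\kappa, i+1]$. By Claim~\ref{cl:coincide_endpoints} the two curves coincide on $[i,i+\kappa] \cup [i+1-\kappa, i+1]$, so on those subintervals $h$ is declared constant. On the remaining thin strips $[i+\kappa, i+2\kappa]$ and $[i+1-2\kappa, i+1-\kappa]$ the ``$\phi^1_{G_j}$-unwinding'' is still in progress, but one checks that both endpoints $\psi_1\alpha'(t)$ and $\alpha'(t+1)$ lie simultaneously in one of the sets $\phi^1_H(W_i) \cup W_{i+1}$ or $\phi^1_H(W_{i+1}) \cup W_{i+2}$ (depending on the strip), which by construction are convex and of diameter less than $\rho^i \eps$. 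Taking $h$ to be the straight-line interpolation in $\R^{2n}$ on these strips keeps the homotopy inside the ambient convex set, hence inside $V_i$, and guarantees that every trajectory has diameter less than $\rho^i \eps$.

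Finally I would patch the three pieces of $h$ together (a short smoothing in the parameter $s$ suffices if smoothness is desired; the claim only asserts existence of a homotopy) and check the two required properties: support lies in $V_i = U_i \cup U_{i+1}$, and the total diameter of each trajectory is at most the sum of the middle-regime and transition-regime displacements, hence less than $2\rho^i \eps$. The main obstacle is purely bookkeeping: verifying that in the transition strips both curves genuinely lie in one of the small convex sets from Step~1, so that a naive straight-line interpolation does not wander out of $V_i$ or pick up more than $\rho^i\eps$ of extra motion. This is where the care taken in Step~1 to arrange that $W_j$ and $\phi^1_H(W_j) \cup W_{j+1}$ be convex of diameter $<\rho^j\eps$, and that $W_j \cap \phi^1_H(W_k) \neq \emptyset$ only for $k = j-1$, pays off.
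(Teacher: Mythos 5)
Your proposal is correct and follows exactly the approach the paper itself takes (and declines to spell out): the straight-line interpolation $h(s,t)=(1-s)\psi_1\alpha'(t)+s\,\alpha'(t+1)$, justified piecewise by the convexity and diameter bounds built into the $W_j$'s in Step~1, with the middle regime handled by the radial collinearity of $\rho^i\phi^{t+1}_H(x)$ and $\rho^{i+1}\phi^{t+1}_H(x)$. One minor bookkeeping slip: on the first transition strip $[i+\kappa,i+2\kappa]$ both endpoints actually lie in $W_i$ itself (not $\phi^1_H(W_i)\cup W_{i+1}$), but since $W_i$ is also convex of diameter $<\rho^i\eps$ this does not affect the argument.
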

\begin{proof}
We point out that, as a consequence of Claim \ref{cl:coincide_endpoints}, the above two curves coincide near their endpoints.  The idea of the proof of this claim is very simple: the straight-line homotopy $F(s,t) = (1-s) \psi_1 \circ \alpha'|_{[i, i+1]} + s \alpha'|_{[i+1, i+2]}$ satisfies all the required properties.  However, checking the details of this, although quite straight forward, is rather tedious.  Hence, we will omit the proof of this claim. \end{proof}

\noindent \textbf{Step 3. Constructing the curve $\alpha$.}
  
  Let $\lambda$ be the 1-form $\sum_{i=1}^n x_i dy_i$.  We define the action of any  curve $a: [0,1] \rightarrow \R^{2n}$ to be the integral $\int_0^1 a^* \lambda.$  We will now describe how one can make an arbitrarily $C^0$ small perturbation of the curve $\alpha'$ to obtain a new curve $\alpha:[-1, \infty) \rightarrow \R^{2n}$ such that, for each $i\geq -1$, the two curves $\psi_1 \circ \alpha|_{[i,i+1]}$ and $\alpha|_{[i+1,i+2]}$ have the same action.  Furthermore, our curve $\alpha$ will coincide with $\alpha'$ for all values of $t$ except near $t= i+ \frac{1}{2}$, where $i \geq 1$.

   
   We begin by defining $\alpha|_{[-1,1]} = \alpha'|_{[-1,1]}$ and proceed to inductively construct the curve $\alpha$.  Suppose that, for some $k\geq 1$, we have constructed $\alpha:[-1,k] \rightarrow \R^{2n}$ with the properties listed in the previous paragraph.  Next, we make a $C^0$ small perturbation of the curve $\alpha'|_{[k,k+1]}$, in an arbitrarily small neighborhood of the point  $\alpha'(k+ \frac{1}{2})$ to obtain a new curve, which we will call
  $\alpha|_{[k,k+1]}$, such that $\psi_1 \circ \alpha|_{[k-1,k]}$ and $\alpha|_{[k,k+1]}$ have the same action. It is evident from the construction that $\alpha$ coincides with $\alpha'$ for all $t$ except for values of $t$ near $i+ \frac{1}{2}$, where $i\geq 1$.  It is also clear that $\alpha$ be can be picked to be arbitrarily $C^0$ close to $\alpha'$.
  
  We finish this step by pointing out that the curve $\alpha$ satisfies the first of the four properties listed in the statement of Theorem \ref{theo:invariant_curve_max_to_nearby}.
  
\vspace{.2cm}
\noindent \textbf{Step 4.  Turning $\alpha$ into an invariant curve.}  In this final step of the proof, we will perturb $\psi_1$ to a Hamiltonian diffeomorphism $\psi$ such that $\psi \circ \alpha(t) = \alpha(t+1)$.  

\begin{claim}\label{cl:Ki}
For each $i\geq  0$, there exists a Hamiltonian $K_i$ supported in $V_i = U_i\cup U_{i+1}$ such that
  \begin{enumerate}
     \item $\phi_{K_i}^1 \circ \psi_1 \circ\alpha(t) = \alpha(t+1)$ for each $t\in[i,i+1]$, 
     \item $d_{C^0}(Id,  \phi_{K_i}^1) < 4\rho^i \eps$ and  $K_i$  can be picked such that $\|K_i\|_{\infty}$ is as small as one wishes,
     \item $ \mathrm{supp}(K_i)$ is contained within the $4\eps$ neighborhood of $\alpha([i+1,i+2])$,
     \item  $\alpha([j, j+1]) \cap \mathrm{supp}(K_i) =  \emptyset$ if $ j \neq i+1$.  Similarly, $ \psi_1 \circ \alpha([j,j+1]) \cap \mathrm{supp}(K_i)= \emptyset$ for $j\neq i$.
    \end{enumerate}
\end{claim}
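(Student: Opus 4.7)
The plan is to obtain $K_i$ by applying the quantitative $h$-principle for curves, Proposition \ref{prop:h_principle}, to the pair of curves $\gamma_0 := \psi_1 \circ \alpha|_{[i,i+1]}$ and $\gamma_1 := \alpha|_{[i+1,i+2]}$ (after reparametrizing to $[0,1]$). Three hypotheses must be verified. The coincidence of $\gamma_0$ and $\gamma_1$ near the endpoints follows from Claim \ref{cl:coincide_endpoints} together with the observation that the curve $\alpha$ was only altered relative to $\alpha'$ near the points $\alpha'(j+\tfrac{1}{2})$, so $\alpha = \alpha'$ on a neighborhood of each integer. The equality of $\lambda$-actions of $\gamma_0$ and $\gamma_1$ is precisely the normalization built into $\alpha$ during Step 3. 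Finally, since $\gamma_0$ and $\gamma_1$ are $C^0$-close to $\psi_1 \circ \alpha'|_{[i,i+1]}$ and $\alpha'|_{[i+1,i+2]}$, the straight-line homotopy from Claim \ref{cl:small_homotopy} (adjusted to stay constant near the endpoints of the subintervals) provides a homotopy, supported in $V_i$, along which each point travels a distance at most $2\rho^i\eps$ (up to a small perturbation absorbed into the constants). Because $\omega_0 = d\lambda$ globally on $\R^{2n}$, Stokes' theorem converts equality of actions into vanishing symplectic area for the homotopy, yielding hypothesis (ii) of Proposition \ref{prop:h_principle}.

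With these hypotheses in hand, I would apply Proposition \ref{prop:h_principle} with its parameter $\eps$ set to $2\rho^i\eps$ and its Hofer-norm parameter chosen as small as desired. This produces a compactly supported Hamiltonian $K_i$ such that $\phi^1_{K_i}\circ\gamma_0 = \gamma_1$, which, unpacking the parametrization, gives property (1); satisfies $d_{C^0}(\phi^s_{K_i},\mathrm{Id}) < 4\rho^i\eps$ with $\|K_i\|_\infty$ arbitrarily small, yielding property (2); and whose support lies in a $4\rho^i\eps$-neighborhood of $\gamma_0$. Since $\gamma_0$ lies within $2\rho^i\eps$ of $\gamma_1 = \alpha([i+1,i+2])$, this support is contained in an $O(\rho^i\eps)$-neighborhood of $\alpha([i+1,i+2])$, and by choosing the parameters of the construction slightly smaller from the outset one obtains the $4\eps$-neighborhood bound required by property (3).

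For property (4), the important feature of Proposition \ref{prop:h_principle} is that $K_i$ vanishes near the endpoints of $\gamma_0$, and hence near $\alpha(i+1)$ and $\alpha(i+2)$. This, together with confinement of $\mathrm{supp}(K_i)$ to a thin tube around $\alpha([i+1,i+2]) \subset U_{i+1}$, ensures disjointness from $\alpha([j,j+1])$ for $j \neq i+1$: for $|j-(i+1)|\geq 2$ the shells $U_j$ and $U_{i+1}$ are disjoint, while for $j = i$ or $j = i+2$ the two adjacent curve pieces only meet $\alpha([i+1,i+2])$ at the excluded endpoints. The analogous statement for $\psi_1\circ\alpha([j,j+1])$ with $j \neq i$ follows from the shadowing observation that $\psi_1\circ\alpha([j,j+1])$ approximates $\alpha([j+1,j+2])$, coinciding with it near the endpoints and remaining $O(\rho^j\eps)$-close in between.

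The main obstacle I anticipate is bookkeeping the various small constants so that $\mathrm{supp}(K_i)$ sits inside $V_i$, inside the $4\eps$-tube around $\alpha([i+1,i+2])$, and simultaneously avoids all the neighboring curve segments. This requires $A$ (controlling how much $\phi^1_H$ rotates) and the perturbation defining $\alpha$ to be sufficiently small relative to the geometry of the shells $U_j$ and the cells $W_j$; once these parameters are fixed at the start of the construction, the rest is a direct invocation of the quantitative $h$-principle.
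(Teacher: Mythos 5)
Your treatment of properties (1)--(3) follows the paper's proof closely and is essentially correct: you apply Proposition~\ref{prop:h_principle} to $\gamma_0 = \psi_1\circ\alpha|_{[i,i+1]}$ and $\gamma_1 = \alpha|_{[i+1,i+2]}$, verifying endpoint coincidence via Claim~\ref{cl:coincide_endpoints}, equality of actions via the Step~3 normalization, and the small-displacement homotopy via Claim~\ref{cl:small_homotopy}, with Stokes converting action equality into vanishing symplectic area. That all matches.

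For property (4), however, your argument departs from the paper and contains a gap. You argue that $\mathrm{supp}(K_i)$, being a thin tube around $\gamma_0$ (hence around $\alpha([i+1,i+2])$), avoids the other curve segments by shell disjointness for $|j-(i+1)|\geq 2$ and by endpoint exclusion for $j=i,i+2$. But the $4\rho^i\eps$-tube around $\gamma_0$ is \emph{radially thick} compared to the spacing between consecutive spheres: the paper takes $1-\rho$ \emph{much smaller} than $\eps$, so $\rho^{i-1}-\rho^{i+1}\approx 2\rho^{i-1}(1-\rho)$ is far smaller than $4\rho^i\eps$, meaning the tube's radial extent overlaps the spheres of radius $\rho^{i-1}$ and $\rho^{i+2}$ where $\alpha([i-1,i])$ and $\alpha([i+2,i+3])$ live. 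The disjointness you want can only come from \emph{angular} separation (of order $\Delta\theta\sim 2|c|$ relative to $\eps$), which you do not estimate; and even if one did the estimate, Proposition~\ref{prop:h_principle} as stated only places $\mathrm{supp}(K_i)$ in a tube around $\gamma_0$ --- it does not let one exclude particular one-dimensional sets from the support.

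The paper sidesteps all of this with a general-position argument: since $\dim M \geq 4$, the two-dimensional homotopy from $\psi_1\circ\alpha|_{[i,i+1]}$ to $\alpha|_{[i+1,i+2]}$ can be $C^0$-slightly perturbed so its image is disjoint from $\alpha([j,j+1])$ for $j\neq i+1$ and from $\psi_1\circ\alpha([j,j+1])$ for $j\neq i$. One then takes $W$ to be a small neighborhood of the image of the homotopy (restricted away from the endpoint intervals, where $K_i$ vanishes anyway), still avoiding those curve pieces, and observes that the construction in the proof of Proposition~\ref{prop:h_principle} produces $K_i$ supported in such a neighborhood of the homotopy image, not merely in a tube around $\gamma_0$. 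This is cleaner than a quantitative tube estimate, and it is the genuinely dimension-dependent step you are missing; you should incorporate it rather than relying on the tube geometry.
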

\begin{proof}    
In the previous step we constructed $\alpha$ such that $\psi_1\circ\alpha|_{[i,i+1]}$ and $\alpha|_{[i+1, i+2]}$ have the same action.  Furthermore, since we obtained $\alpha|_{[i,i+1]}$ from $\alpha'|_{[i, i+1]}$ by making a $C^0$-small perturbation of $\alpha'|_{[i, i+1]}$ near $t= i+ \frac{1}{2}$ we conclude, by Claim \ref{cl:small_homotopy}, that there exists a homotopy, which is constant near endpoints,  from $\psi_1 \circ \alpha|_{[i,i+1]}$ to $\alpha|_{[i+1,i+2]}$  which is supported in $V_i$ and furthermore, under this homotopy  the trajectory of any point of $\psi_1 \circ \alpha|_{[i,i+1]}$ has diameter less than $2 \rho^i \eps$. The first three properties of the claim then follow immediately from Proposition \ref{prop:h_principle}.  As for the fourth property, since the dimension of $M$ is at least four, by making a small perturbation of the homotopy from $\psi_1 \circ \alpha|_{[i,i+1]}$ to $\alpha|_{[i+1,i+2]}$, we may assume the image of the homotopy does not intersect $\alpha([j, j+1])$, for $ j \neq i+1$, or $\psi_1 \circ \alpha([j,j+1])$, for $j \neq i$.   Fix small $ \kappa_i > 0 $ such that the homotopy is constant on $\psi_1 \circ \alpha|_{[i,i+\kappa_i] \cup [i+1-\kappa_i]} $.  Let $W$ be a small neighborhood of the image of the restricted homotopy from $\psi_1 \circ \alpha|_{[i+\kappa_i,i+1-\kappa_i]}$ to $\alpha|_{[i+1+\kappa_i,i+2-\kappa_i]}$ such that $\alpha([j, j+1]) \cap W =  \emptyset$, if $ j \neq i+1$ and $\psi_1 \circ \alpha([j,j+1]) \cap W = \emptyset$ for $j\neq i$.  Now, it is easy to see that $K_i$ can be picked to have its support contained in $W$.  This implies the fourth property of the claim.
\end{proof}

Note that for each $i$, $V_i \cap V_j \neq \emptyset$ only if $j\in \{i-2, i-1, i, i+1, i+2\}$.  
Hence, we see that the sets $ V_0, V_3, V_6, \ldots$ are mutually disjoint and therefore the supports of $ K_0, K_3, K_6, \ldots$ are mutually disjoint as well.  This combined with the fact that  $\|K_i\|_{\infty}$ can be picked to be as small as we wish, implies that the sum $F_0:= K_0+ K_3 + K_6  + \cdots$ defines a continuous function.  Furthermore, as a consequence of the disjointness of the supports of these functions, we see that $F_0$ is a continuous Hamiltonian whose  flow is 
$\phi^t_{F_0}= \phi^t_{K_0} \circ \phi^t_{K_3} \circ \phi^t_{K_6} \cdots $.  Similarly, we define $F_1 = K_1+K_4 +K_7 + \cdots$ and $F_2 =  K_2 +  K_5 +K_8 +  \cdots$.  These functions generate the hameotopies  $\phi^t_{F_1}=  \phi^t_{K_1} \circ \phi^t_{K_4} \circ \phi^t_{K_7}  \cdots $ and $\phi^t_{F_2}= \phi^t_{K_2} \circ \phi^t_{K_5} \circ \phi^t_{K_8} \cdots $, respectively.    Observe that, as a consequence of the second item in Claim \ref{cl:Ki}, $F_0, F_1, F_2$ can be picked such that their norms are as small as one wishes.

We define $\theta := \phi^1_{F_2}\circ \phi^1_{F_1}\circ \phi^1_{F_0}\circ \phi^1_G$ and $\psi:= \theta \circ \phi^1_H = \phi^1_{F_2} \circ\phi^1_{F_1} \circ\phi^1_{F_0}\circ\psi_1$.  Clearly, $\theta$ is a Hamiltonian homeomorphism which is compactly supported in $B$. 

 We will now check that $\theta$ and the curve $\alpha$ satisfy the properties listed in the statement of Theorem \ref{theo:invariant_curve_max_to_nearby}.  We have already checked the fact that the first property is satisfied.       

First, we will show that $\psi( \alpha(t)) = \alpha(t+1)$, for all $t\in[-1, \infty)$.  If $t \in[-1,0]$, then this follows from Claim \ref{cl:invariance_[-4,0]}.  Indeed, for $t \in[-1,1]$ we have $\alpha(t) = \alpha'(t)$. Moreover, for $ t \in [-1,0] $ we have $\psi(\alpha'(t)) =  \phi^1_{F_2} \circ\phi^1_{F_1} \circ\phi^1_{F_0} (\psi_1( \alpha'(t))) = \phi^1_{F_2} \circ\phi^1_{F_1} \circ\phi^1_{F_0} (\alpha'(t+1)) = \alpha'(t+1)$.  The reason the last equality holds is that, by the 4th item of Claim \ref{cl:Ki}, $\alpha'(t+1) = \alpha(t+1) $ is not contained in the support of any of $F_0, F_1, F_2$.  Next, consider $t \in [0, \infty)$.  Fix $i$ and $t\in[i,i+1]$.  According to Claim \ref{cl:Ki}, $\alpha(t+1), \psi(\alpha(t)) \notin \mathrm{supp}(K_j)$ for any $j$ other than $j=i$.  Hence,  $\psi( \alpha(t)) = \phi^1_{F_2} \circ\phi^1_{F_1}\circ \phi^1_{F_0}\circ \psi_1( \alpha(t)) = \phi^1_{K_i} \circ \psi_1( \alpha(t)) = \alpha(t+1) $. 

 To establish the remaining properties  we will need the following claim.  We define a nested sequence of balls $ B_i \supset B_{i+1}$ as follows: For $i\geq 0$  we set $B_i = \{0\} \cup(\cup_{k\geq i} V_k)$.  Note that $B_0$ is a slight enlargement of the unit ball and furthermore $B_0$ contains the supports of all $F_0, F_1, F_2$, and $G$.  
\begin{claim}\label{cl:everything_C0_small}
Suppose that $p\in B_0 - \{ 0\}$. Let $i$ denote the smallest integer such that $ p \in U_i$.  Then, for each $j\in \{0,1,2\}$, 
\begin{itemize}
\item if $i>0$, then $\phi^1_{F_j}(p) \in B_{i-1}$ and $ |\phi^1_{F_j}(p) - p| \leq 4\rho^{i-1} \eps$ .
\item if $i=0$, then  $\phi^1_{F_j}(p) \in B_{0}$ and $ |\phi^1_{F_j}(p) - p| \leq 4 \eps$.
\item $\phi^1_G(p) \in B_{i-1}$ and $|\phi^1_G(p) -p| \leq \rho^{i-1} \eps$.
\end{itemize}
\end{claim}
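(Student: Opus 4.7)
The plan is to track $p$ through each flow by decomposing $F_j$ and $G$ into their building block summands $K_k$ and $G_k$, and then exploiting the control on the locations of the supports. First recall that $F_j$ is the sum of those $K_k$ with $k \equiv j \pmod{3}$, and that $\mathrm{supp}(K_k) \subset V_k$; the $V_k$'s occurring in a single $F_j$ are pairwise disjoint because $V_k \cap V_{k'} = \emptyset$ whenever $|k-k'| \geq 3$. Consequently $\phi^1_{F_j}$ acts at $p$ as either the identity or as $\phi^1_{K_k}$ for a single index $k$. An identical remark applies to $\phi^1_G$, since $\mathrm{supp}(G_k) \subset W_k$ and the $W_k$'s are pairwise disjoint by construction.

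The next step is to localize the indices $k$ that are geometrically accessible to $p$. Using $U_j \cap U_{j'} = \emptyset$ for $|j - j'| > 1$, the minimality of $i$ forces $p \in U_j$ only for $j \in \{i, i+1\}$; hence $p \in V_k = U_k \cup U_{k+1}$ only for $k \in \{i-1, i, i+1\}$, and $p \in W_k \subset U_k$ only for $k \in \{i, i+1\}$ (with the additional constraint $k \geq 0$ in both cases).

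Finally I would combine these two observations with the quantitative bounds from Claim \ref{cl:Ki} and the diameter bound on $W_k$. For $F_j$: there is at most one relevant $k$, and whenever $\phi^1_{F_j}(p) \neq p$ we have $\phi^1_{F_j}(p) \in V_k$, so $\phi^1_{F_j}(p) \in V_k \cup U_i \subset B_{i-1}$ (or $\subset B_0$ when $i = 0$), with $|\phi^1_{F_j}(p) - p| \leq 4\rho^k \eps \leq 4\rho^{i-1}\eps$ when $i \geq 1$ and $\leq 4\eps$ when $i = 0$ (in which case only $k \in \{0,1\}$ is relevant). For $G$: both $p$ and $\phi^1_{G_k}(p)$ lie in the same $W_k$, which is convex of diameter less than $\rho^k \eps$; since $k \in \{i, i+1\}$ this gives the displacement bound $|\phi^1_G(p) - p| \leq \rho^i \eps \leq \rho^{i-1}\eps$, and the inclusion $\phi^1_G(p) \in W_k \cup U_i \subset V_k \cup V_i \subset B_{i-1}$.

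The only genuine difficulty here is bookkeeping at the boundary $i = 0$, where $K_{-1}$ and $V_{-1}$ do not exist, so only $k \in \{0, 1\}$ is relevant for $F_j$ and the target set must be reinterpreted as $B_0$; this matches the separate $i = 0$ clause in the statement, and the analogous interpretation for $G$ at $i = 0$ (giving $\phi^1_G(p) \in B_0$ with displacement at most $\eps$) is immediate.
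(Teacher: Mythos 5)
Your proof is correct and takes essentially the same approach as the paper: decompose $F_j$ into the summands $K_k$ with $k\equiv j\pmod 3$, use the overlap structure of the shells $U_k$ and the inclusion $\mathrm{supp}(K_k)\subset V_k$ (resp.\ $\mathrm{supp}(G_k)\subset W_k$) to reduce to a single relevant index $k\in\{i-1,i,i+1\}$ (resp.\ $k\in\{i,i+1\}$), and then invoke the displacement estimate of Claim~\ref{cl:Ki} (resp.\ the diameter bound $\operatorname{diam}(W_k)<\rho^k\eps$). The paper's proof carries out exactly this argument, stated for $j=0$, $i>0$ only, leaving the remaining cases (including the $i=0$ boundary bookkeeping you handled) to the reader.
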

\begin{proof} We begin with the statement about $\phi^1_{F_j}(p)$.  We will only prove this for $j=0, i>0$ and leave the remaining cases to the reader. Recall that $F_0 = K_0 + K_3 + K_6 + \cdots$ and that $\mathrm{supp}(K_m) \subset V_m = U_m \cup U_{m+1}$ for all $m$.  Hence, the point $p$ can only be in the support of $K_{i-1}, K_i, K_{i+1}$.  Now, only one of these three Hamiltonians enters the definition of  $F_0$ and so we see that $\phi^1_{F_0}(p) \in \{\phi^1_{K_{i-1}}(p), \phi^1_{K_{i}}(p), \phi^1_{K_{i+1}}(p)\}$.  The result then follows from Claim \ref{cl:Ki}.  The statement about $\phi^1_G(p)$ is proven similarly.
\end{proof}

We will now prove that $\psi$ has no fixed points other than $0$.  Recall that $ \psi = \theta \circ\phi^1_H.$  Using the above claim, and the fact that $\theta = \phi^1_{F_2} \circ\phi^1_{F_1} \circ\phi^1_{F_0} \circ\phi^1_G$, we see that 
\begin{enumerate}
\item $|\theta \circ\phi^1_H(p) -\phi^1_H(p)| \leq  13 \rho^{i-4} \eps$, when $p \in B_i$ and $i\geq 4$
\item $|\theta \circ\phi^1_H(p) -\phi^1_H(p)| \leq 13\eps$, when $p \in B_i$ and $i\leq3.$
\end{enumerate}
 
 Suppose that $p \in U_i$ where $i \geq4$. We will show that $p$ can not be a fixed point of $\theta \circ \phi^1_H$.  First, note that since $U_i$ is a slight enlargement of the shell $\{z: \rho^{i+1} \leq |z| \leq \rho^{i} \}$, we can assume that $|p| \geq \rho^{i+2}.$ Recall that, we picked $\eps$ to be very small in comparison to the number $C= |\phi^1_H(z) -z|$, where $z$ is any point such that $|z|=1$. It follows that $|\phi^1_H(p) -p| \geq C \rho^{i+2}$.  Hence, we see that if $C \rho^{i+2} > 13 \rho^{i-4} \eps$, then $\theta \circ\phi^1_H(p) \neq p$.   Of course, by picking $\eps$ to be sufficiently small we can make sure that the inequality $C \rho^{i+2} > 13 \rho^{i-4} \eps$ holds.  (Recall that we picked $\rho$ such that $1-\rho$ is small in comparison to $\eps$.)  We leave the case where $p\in U_i$ and $i \leq 3$ to the reader.
 
 \medskip 
 
 Next, we will show that the support of $\theta$ satisfies the fourth property from the statement of the theorem.  Indeed, examining the proof we can see that the Hamiltonians $G_i, K_i$ were all picked to be  supported within at most a $4\eps$ neighborhood of $\alpha([i+1, i+2])$.  Of these Hamiltonians it is only $G_0, K_0$ whose supports intersect the complement of $B(0, \|x\|)$.  This implies that the support of $\theta$ intersects the complement of $B(0, \|x\|)$ only in a $4\eps$--neighborhood of  $ \alpha([1,2])$.  Now, we leave it to the reader to check that $ \alpha([1,2])$ is contained within an $\eps$--neighborhood of $\{\phi^t_H(x): t \in [1,2] \}$.
 Replacing $\eps$ by $\frac{\eps}{5}$ throughout the proof yields the result.

  We now check the 5th property from the statement of the theorem.  Since $\theta = \phi^1_{F_2} \circ\phi^1_{F_1} \circ\phi^1_{F_0} \circ\phi^1_G$, it is an immediate consequence of Claim \ref{cl:everything_C0_small} that $d_{C^0}(\theta, Id) \leq 13 \eps$.  Hence, if we replace $\eps$ by $\frac{\eps}{13}$ then we obtain $d_{C^0}(Id, \theta) \leq \eps.$  Next, let $F$ denote the generating Hamiltonian of the continuous Hamiltonian flow $\phi^t_{F_2}\circ \phi^t_{F_1}\circ \phi^t_{F_0}\circ \phi^t_G$.  It follows from the composition formulas mentioned in Section \ref{sec:sympl-hamilt-home} that $\|F\|_{\infty} \leq \|F_0\|_{\infty} + \|F_1\|_{\infty} + \|F_2\|_{\infty} + \| G \|_{\infty}$.  As we have already mentioned, $F_0, F_1, F_2, G$ can be picked to have norms as small one wishes.  Hence, the same is true for $\|F\|_{\infty}$.

   Finally, we check the 6th and final property. Recall the nested sequence of balls $B_i \supset B_{i+1}$ introduced before Claim \ref{cl:everything_C0_small}. Let $B_i^c$ denote the complement of $B_i$. Observe that in $B_i^c$, for any $i$, each of $\phi^1_{F_0}, \phi^1_{F_1}, \phi^1_{F_2}$, and $\phi^1_G$ coincides with a Hamiltonian diffeomorphism.  Furthermore, using Claim \ref{cl:everything_C0_small}, it can easily be checked that each of these homeomorphisms maps $ B_i^c$ into  $B_{i+1}^c$.  Combining these facts together we see that  $\theta$ coincides with a Hamiltonian diffeomorphism in the complement of each $B_i$. 
\end{proof}

\subsubsection{Connecting two non critical points by an invariant curve }\label{section:connecting-2-points}

The main goal of this section is to prove the following technical result which is needed for the construction of the invariant curve of Theorem \ref{theo:invariant_curve}.

\begin{theo}\label{theo:connecting-non-critical}
Denote by $ H: M \rightarrow \R $ a smooth autonomous Hamiltonian on a closed symplectic manifold $(M, \omega)$ of dimension at least 4. Let $ z,w$ be two points in  $M$.  Suppose that there exists a smooth embedded curve $\gamma : [0,1] \rightarrow M $ such that 
 $ \gamma(0) = z,  \gamma(1) = w$,  such that $ H $ is not constant on $ \gamma $,
and  the map $ \Gamma : [0,1] \times \mathbb{R} \rightarrow M $ defined by $ \Gamma(s,t) = \phi_{H}^t(\gamma(s)) $ is a smooth embedding on
 $(\{0\}\times[-2,1])\cup((0,1)\times[0,1]) \cup(\{1\}\times[-1,2])$. 

Then, for any constants $ \eps, \rho > 0 $ and 
 any neighborhood $ U $ of $ \Gamma([0,1] \times [0,1]) $, there exist

\begin{itemize}
\item a Hamiltonian diffeomorphism $ \psi : M \rightarrow M $,
\item and a smooth embedded curve $ \alpha : [0,k+1] \rightarrow M  $, where $k \in \Z$,
\end{itemize}

{\bf such that:} 

\begin{enumerate}

\item $ \psi(\alpha(s)) =   \alpha(s+1) $ for any $ s \in [0,k] $, 

\item There exists $t_0 \in (1,2)$ such that for all $s\in[0,t_0]$, $\alpha(s)=\phi_H^{s-2}(z)$, and $\alpha([t_0, k+1]) \subset\displaystyle U\cup \m C(\gamma((0,1)))$,



\item  Let $V$ be any neighborhood of the point  $\phi_H^{\frac12}(w)$.  Then, $\alpha$ can be chosen to satisfy the following property:
$$\alpha(k+s)\begin{cases}=\phi_H^{s}(w), & s\in[0,\tfrac12-b]\cup[\tfrac12+b,1]\\
\in  V, & s\in[\tfrac12-b,\tfrac12+b],\end{cases}
$$ for some small $b>0$.  Furthermore,  $ \psi \circ \alpha(k+s) = \phi_H^{s+1}(w) $ for $ s \in[0,\tfrac12-b]\cup[\tfrac12+b,1] $,

\item   $ d_{C^0}(\psi, \phi_{H}^1) < \eps $  and the Hamiltonian diffeomorphism $\theta :=  \psi \circ \phi_{H}^{-1} $  is  generated by a Hamiltonian, say $F$, such that $ \| F  \|_{\infty}<\rho$. Furthermore,  $F$ is supported inside $\displaystyle U\cup \m C(\gamma((0,1)))$,

\item $\psi$ has the same set of fixed points as $\phi^1_H$. 
\end{enumerate}
\end{theo}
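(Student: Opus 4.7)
The plan is to mirror the architecture of the proof of Theorem \ref{theo:invariant_curve_max_to_nearby}, but in the present non-critical setting where the two-parameter map $\Gamma(s,t) = \phi_H^t(\gamma(s))$ is a smooth embedding on the prescribed domain and $\gamma$ automatically avoids all fixed points of $\phi_H^1$ (since $H$ is non-constant on $\gamma$ and $\Gamma$ is embedded, no $\gamma(s)$ can be a critical point of $H$).

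First I would choose a fine partition $0 = s_0 < s_1 < \cdots < s_k = 1$, with $k$ large, and cover the ribbon by small flow boxes $W_i \subset U$ containing $\Gamma([s_{i-1}, s_i] \times [0,1])$, chosen so that $\omega = d\lambda$ is exact on each $W_i$ and $W_i \cap W_j = \emptyset$ for $|i-j| > 1$. Then I would construct a first approximation $\alpha' : [0, k+1] \to M$ piecewise: set $\alpha'(s) = \phi_H^{s-2}(z)$ on $[0, 2]$ (building in condition 2); on each interior segment $[i, i+1]$ with $2 \leq i \leq k-1$, let $\alpha'$ be a reparametrized time-$1$ orbit segment of a point near $\gamma(s_{i-1})$; and on $[k, k+1]$ let $\alpha'$ essentially follow $\phi_H^{s-k}(w)$ with a small prescribed detour through the neighborhood $V$ near $s = k + \tfrac12$ (condition 3). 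At each interior junction $s = i$ the adjacent pieces match only up to a tiny gap contained in $W_i$; a short application of Lemma \ref{lemma:small_Ham_map_curves} on each such sub-interval smooths out these gaps and gives a smooth embedded curve, using the embeddedness of $\Gamma$ and the fineness of the partition to preclude self-intersections.

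Next I would perform an action correction in the spirit of Claim \ref{cl:adjust_action}: inside each exact flow box $W_{i+1}$, a $C^0$-small local modification of $\alpha'$ near the midpoint of $[i+1, i+2]$ balances the $\lambda$-actions of $\phi_H^1 \circ \alpha'|_{[i, i+1]}$ and $\alpha'|_{[i+1, i+2]}$, producing a curve $\alpha$ whose endpoint matching and action matching both hold, and which still satisfies conditions 2 and 3 because the modifications are placed strictly in the interior of the range. For each such $i$ the two arcs $\phi_H^1 \circ \alpha|_{[i, i+1]}$ and $\alpha|_{[i+1, i+2]}$ coincide near their endpoints, have equal $\lambda$-actions, and bound a zero-area disc inside $W_i \cup W_{i+1}$; Proposition \ref{prop:h_principle} then supplies Hamiltonians $K_i$ with arbitrarily small Hofer norm and $C^0$-size controlled by $\eps$, supported in $U \cup \mathcal{C}(\gamma((0,1)))$, such that $\phi_{K_i}^1 \circ \phi_H^1 \circ \alpha(s) = \alpha(s+1)$ on $[i, i+1]$. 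Grouping the $K_i$'s into three families with mutually disjoint supports (as with $F_0, F_1, F_2$ in the preceding proof) produces a single Hamiltonian $F$ generating a diffeomorphism $\theta$ with $\|F\|_\infty < \rho$ and $d_{C^0}(\theta, \mathrm{Id}) < \eps$, after which $\psi := \theta \circ \phi_H^1$ satisfies conditions 1 and 4 by construction.

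Condition 5 follows immediately: the support of $\theta$ lies in a compact set on which $|\phi_H^1(x) - x|$ is uniformly bounded below by a positive constant (since this set avoids all fixed points of $\phi_H^1$), so any sufficiently $C^0$-small $\theta$ cannot produce new fixed points. The main obstacle I expect is the rigidity of the two extremal segments dictated by conditions 2 and 3: the curve $\alpha$ must equal the genuine $\phi_H$-orbit of $z$ on $[0, t_0]$ and must equal $\phi_H^{s}(w)$ on $[k, k+1]$ away from the prescribed detour through $V$. This forces all action corrections and all applications of the quantitative $h$-principle to occur strictly in the interior of $[0, k+1]$, and the bump toward $V$ near $s = k + \tfrac12$ must be designed in advance so that the final action-matching and the final application of Proposition \ref{prop:h_principle} are compatible with it, which is the most delicate bookkeeping of the argument.
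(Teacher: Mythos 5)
Your proposal is missing the key mechanism that makes the construction work: an application of the Poincar\'e recurrence theorem. In your plan, the segment $\alpha'|_{[i+1,i+2]}$ is anchored at a point near $\gamma(s_i)$, while $\psi\approx\phi_H^1$ applied to the preceding segment $\alpha'|_{[i,i+1]}$ produces a segment anchored at $\phi_H^1$ of a point near $\gamma(s_{i-1})$. These are in general far apart: $\phi_H^1$ preserves $H$, so it keeps the anchor on (essentially) the same level set as $\gamma(s_{i-1})$, whereas $\gamma(s_i)$ lies on a \emph{different} level set --- indeed $H$ is strictly monotone along $\gamma$ since $\Gamma$ is an embedding, so $\gamma$ is transverse to the level sets. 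No $C^0$-small correction, and no application of Proposition~\ref{prop:h_principle} or Lemma~\ref{lemma:small_Ham_map_curves}, can bridge this mismatch: the gap is of macroscopic size set by the advance of $\phi_H^1$, not by the fineness of the partition. The ``delicate bookkeeping'' you flag at the end is not the central difficulty.

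What the paper does instead is insert, between consecutive subdivision points, a long waiting phase produced by Poincar\'e recurrence: one chooses a point $x_i$ in a small ball $B_i$ near $\gamma(i/m)$ and an integer $k_i\geq 2$ with $y_i:=\phi_H^{k_i}(x_i)\in B_i$, so that after $k_i$ time-$1$ steps the orbit returns to a tiny ball around its own starting point. Only then is a genuinely small jump performed, from $y_i$ to $x_{i+1}\in B_{i+1}$, of size controlled by $\diam(B_i)+d(\gamma(i/m),\gamma((i+1)/m))$. This is why the integer $k$ in the statement equals $k_0+\cdots+k_{m-1}$ rather than the number of partition points, and why a first perturbation $\tau=\phi^1_{G_1}$ realizing these jumps is constructed \emph{before} any action correction or $h$-principle is invoked. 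The action-balancing step, the quantitative $h$-principle, the grouping into disjoint-support families, and the fixed-point argument are all correctly anticipated in your proposal, but they come in only after the recurrence step has guaranteed that all remaining discrepancies are small.
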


\begin{proof}[Proof of Theorem \ref{theo:connecting-non-critical}]

By replacing $U$ with a smaller open subset we can assume that there exists a diffeomorphism  $\Phi: U \rightarrow (-c, 1+c) \times (-c, 1+c) \times (-c,c) \times \ldots \times (-c,c) \subset \R^{2n}$, where $c>0$ is sufficiently small. By slightly decreasing $ c $, if necessary, we may assume that $ \Phi $ is a bi-Lipschitz map, where we consider the metric $ g $ on $ M $ and the standard euclidean metric on $ (-c, 1+c) \times (-c, 1+c) \times (-c,c) \times \ldots \times (-c,c) $. We can further suppose that this diffeomorphism identifies $\Gamma(s,t)$ with $(s,t, 0, \ldots, 0)$ for all $s, t \in [0,1]$.  Note that here we are relying on the fact that $\Gamma$ is an embedding on $[0,1] \times [0,1]$. 

\noindent \textbf{Step 1. Preliminary preparations  for the construction of the invariant curve.}  
We begin by picking $\delta >0$ such that  $\delta<\frac{\eps}{2}$ and   $\forall x, y \in M$
\begin{equation}\label{eq:delta}
  \mathrm{if} \; d(x,y) < \delta, \; \mathrm{then} \; \forall t\in [0,1],\; d(\phi^t_H(x), \phi^t_H(y)) < \frac{\eps}{2}.
\end{equation}
Pick $m \in \N$  large enough  such that $d(\gamma(\frac{i}{m}), \gamma(\frac{i+1}{m})) < \frac{\delta}{2}.$ For each $ 0 \leq i < m $, we define small neighborhoods $ U_i $ of  $ \Gamma([\tfrac{i}{m}, \tfrac{i+1}{m}] \times [0,1]) $ by 
   $$U_i := \Phi^{-1} ((\tfrac{i}{m} - a, \tfrac{i+1}{m} +a )  \times (-c, 1+c) \times (-c,c) \times \ldots \times (-c,c)), $$
where $a>0$ is taken to be so small  that $ U_i \cap U_j \neq \emptyset$ only if $j \in\{ i-1, i, i+1\}$. Clearly, $U_i \subset U$ for all $i$.
 
Take $r>0$ to be small in comparison to $\delta$ such that the $ r $--neighborhood of $ \gamma(\frac{i}{m}) $ is contained in $U_i$.  For each $i$ choose a ball $ B_i $ in the $ r $--neighborhood of $ \gamma(\frac{i}{m}) $ such that $ B_i \subset U_i \cap C(\gamma((0,1)))$ and $ H(B_i) \cap H(B_j) = \emptyset $ for any $ i \neq j $. It is not hard to show that such a choice of $B_i$ is always possible; here it is important to remember that, because $\Gamma$ is an embedding,  $ \gamma $ does not pass through any of the critical points of $ H $.  Note that we may also assume that $B_{i+1} \subset U_i$ for $0\leq i < m-1$.

  We  can ensure that for each $x \in B_i \cup B_{i+1}$ the image of the curve $t \mapsto \phi^t_H(x), t\in [0,1] $ is contained in $U_i$; this will be used in Step 4.

 By the Poincar\'e recurrence theorem, we can find points $x_i \in B_i, 0 < i <m$, and integers $k_{i} \geq 2$  with the property that $\phi_H^{k_{i}}(x_i) \in  B_i$.   Let $y_i := \phi_H^{k_{i}}(x_i)$.  By an arbitrarily $ C^2 $-small perturbation of $ H $ inside the open set $ U $ away from the curves $ (\phi_H^{t}(x_0))_{t \in [-2,0]} $ and $ (\phi_H^{t}(x_m))_{t \in [0,2]} $, if needed, we may assume that  the curve $ (\phi_H^t(x_i))_{t \in [-1,k_i+1]} $ is embedded, for each $ i $. 
If  $i$ is $0$ or $m$, we set $x_0 = \phi^{-2}_H(z), y_0 = z, k_0 =2$ and $x_m = w, y_m = \phi^1_H(x_m), k_m =1$. 
 We remark that it is possible to pick the points $x_i$ such that  for each $ 0 \leqslant i < m $, the points $x_i$ and $ x_{i+1} $ do not belong to the same level set of $H$; this will be used in the next step of the proof.  
 Figure \ref{fig:Gamma}, below, describes the settings from Step 1. 

\begin{figure}[h!]
\centering
\def\svgwidth{\textwidth}
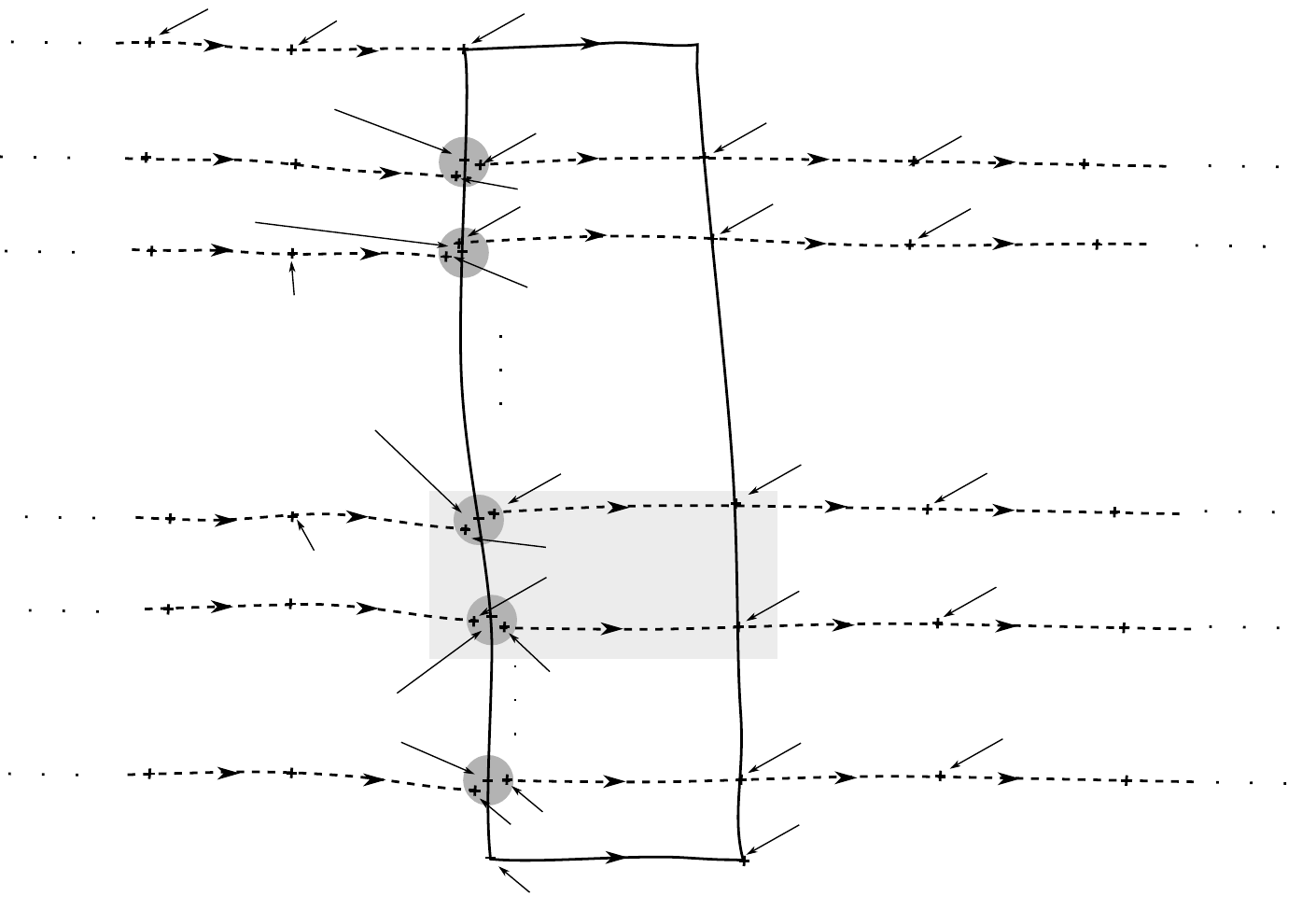
\caption{Settings from Step 1.}
\label{fig:Gamma}
\end{figure}

 \vspace{.2 cm}
 
\noindent \textbf{Step 2. Turning the sequence $x_0, \ldots, x_m$ into a genuine orbit.} For each $0 \leq i \leq m-1$ pick smooth embedded curves $a_i:[0,1] \rightarrow M$ such that 
\begin{itemize} 
\item $a_i(0) = y_i$ and $a_i(1) = x_{i+1}$,
\item the diameter of $a_i$ is smaller than $\delta$ (note that $d(y_i, x_{i+1}) < \delta$),
\item the image of $a_i$ is disjoint from the image of $a_j$, for $j\neq i$, and it is contained in the open set $U_i$,
\item for each $i$ the image of $a_i$ does not intersect any of the curves $t \mapsto \phi^t_H(x_j), \;  t \in [-1, k_{j}+1]$, for any value of $j$, except at the two points $y_i = \phi_H^{k_{i}}(x_i)$ and $x_{i+1} =  \phi^0_H(x_{i+1})$. 
 Moreover, for each $ i  $ the image of $ a_i $ does not intersect the curves $ (\phi^t_H(x_0))_{t \in [-2, 0]}$ and $(\phi^t_H(x_m))_{t \in [0, 2]}$, except at the point $ x_0 $ and $ x_m $, respectively. 
\end{itemize}

The above conditions, combined with the fact that the points $x_i$ were picked such that $y_i$ and $x_{i+1}$ are not on the same trajectory of $\phi^t_H$, guarantee that for each $0 \leq i \leq m-1$ we can find $W(a_i)$, a small neighborhood of the image of $a_i$, with the following properties: $W(a_i) \subset U_i$, the diameter of $W(a_i)$ is less than $\delta$, these neighborhoods are disjoint, and  each of the curves $(\phi^t_H(y_i))_{t\in [{ -1,1}]}$ and $(\phi^t_H(x_{i+1}))_{t\in[{ -1, 1}]}$ takes values in $W(a_i)$ if and only if $t \in (-2 \kappa_i, 2 \kappa_i)$ where $\kappa_i>0$ is small. 
 Now, let $\kappa = \min \{\kappa_1, \ldots, \kappa_{m-1} \}.$ By shrinking the neighborhoods  $W(a_i)$ we may assume the following: \\ If $t \in [-1, 1]$, then  for $ 0 \leqslant i \leqslant m-1 $ we have 
\begin{align}\label{eq:image_in_W}
  \phi^t_H(y_i) \in W(a_i) \iff \phi^t_H(x_{i+1}) \in W(a_i) \iff t \in (-2\kappa,2\kappa).
 \end{align}

If $t \in [-2, 1]$, then
\begin{align}\label{eq:image_in_W-2}
  \phi^t_H(x_{0}) \in W(a_0) \iff t \in (-2\kappa,2\kappa).
 \end{align}
If $t \in [-1, 2]$, then
\begin{align}\label{eq:image_in_W-3}
  \phi^t_H(x_{m}) \in W(a_m) \iff t \in (-2\kappa,2\kappa). 
 \end{align}

Next, we find a Hamiltonian, say $G_1$, the time-1 map of whose flow we will denote by $\tau$, such that $G_1$ is supported in the union of $W(a_i)$'s, and 
\begin{equation}\label{eq:def_tau} \tau(\phi_H^t(y_i)) = \phi^t_H(x_{i+1}), \;  \forall \;t \in [-\kappa, \kappa]. \end{equation}

Let $\psi_1 := \tau \phi^1_H$; observe that $\psi_1^{k_{i}}(x_i) = x_{i+1}$ for each $0 \leq i \leq m-1$.  Also, note that $d_{C^0}(\tau, Id) < \delta$ and thus $d_{C^0}(\psi_1, \phi^1_H) < \delta < \eps$.  Lastly, we remark that $G_1$ can be picked such that $\|G_1\|_{\infty}$ is as small as desired.  

\vspace{.2 cm}
\noindent \textbf{Step 3. A first approximation to  the invariant curve $\alpha$.} Let $k = k_0 + k_1 + \ldots + k_{m-1}$ and consider the smooth curve $\alpha_1 : [0,k+1] \rightarrow M$ given by the following formula: 
For $0 \leq i \leq m$, let $K_i = \sum_{j=0}^{i} k_j$ and define
$$ \alpha_1(t) :=  \begin{cases} 

 \phi^t_H(x_0)  \;\;\;\;\;\;\;\;\; \;\; \;\;\;\;t \in[0,1] \\

\tau \circ \phi^t_H(x_0)  \;\;\;\; \;\; \;\;\;\;t \in[1,K_0] \\

 \phi^{t - K_{i}}_H(x_{i+1}) \;\;\;\; \;\;\;\; t \in \left[K_{i},K_{i+1} -1 \right], 0\leq i \leq m-2\\

 \tau \circ \phi^{t - K_{i}}_H(x_{i+1}) \;\;\; t \in \left[K_{i+1} -1, K_{i+1} \right], 0\leq i \leq m-2\\

 \phi^{t-k}_H(x_m) \;\;\;\;\;\; \;\;\;\;\;\; t \in [k,k+1]
\end{cases}$$

  The fact that the above formula yields a smooth curve is an immediate consequence of Equation \eqref{eq:def_tau}. By slightly perturbing the $x_i$'s, while ensuring that the $ k_i $'s remain unchanged), and the Hamiltonian diffeomorphism $\tau$ if needed we can guarantee that this curve is embedded; for example we can pick the $x_i$'s from different level sets of $H$ and pick $\tau$ such that the curve $t \mapsto \tau(\phi_H^t(y_i))$, where $t \in [-2\kappa, 0]$, does not intersect  $\phi^t_H(x_{i+1})$ for $t  \in (0, 2 \kappa)$.

\begin{remark}\label{rem:alpha1_location}  In this remark, we will prove that the curve $\alpha_1$ satisfies the second property from the statement of the theorem.

We take $t_0 = K_0 - 2 \kappa = 2 - 2 \kappa$.  Then, it follows from the definitions of $\alpha_1$ and $\tau$, and the fact that $x_0 = \phi^{-2}_H(z)$, that  $\alpha_1(t) = \phi^{t-2}_H(z)$ for $t \in [0, t_0]$.  We remark that $\alpha_1([t_0, k+1])\subset U \cup \m C(\gamma((0,1)))$.  This can easily be checked from the definition of $\alpha_1$ and noting the following facts: $\tau$ is supported in $U$, and $\{x_i: 1 \leq i \leq m\} \subset U \cap \m C(\gamma((0,1))$. 

  In Step 4 we will have to modify $\alpha_1$ to obtain $\alpha$.  However, we will never change $\alpha_1$ on $[0,t_0]$ and all of the modifications will take place inside $U$.  Hence, the second property from the statement of the theorem will continue to hold for $\alpha$ as well.
\end{remark}
\begin{remark} \label{rem:third_property} Note that $\alpha_1(k+s) = \phi^s_H(w)$, for $s \in[0,1]$, and  $ \psi_1 \circ \alpha_1(k+s) = \phi_H^{s+1}(w) $ for $ s \in[0,1] $.  So at this stage the third property is also satisfied for any choice of $V$ and any sufficiently small value of $b$.  Throughout the remainder of the proof we will have to modify the curve $\alpha_1$.  However, the modifications on $[k,k+1]$ will be  such that the the formula given in the third property will remain true.  Furthermore, we will also modify $\psi_1$ to a different map $\psi$.  Now, $\psi$ will be constructed such that the support of $\theta:= \psi \circ \phi_H^{-1}$ will not intersect the curve $\phi^1_H \circ \alpha([k, k+1])$.  It will then follow that  $ \psi \circ \alpha(k+s) = \phi_H^{s+1}(w) $ for $ s \in[0,\tfrac12-b]\cup[\tfrac12+b,1] $.  This establishes the third property from the statement of the theorem.
\end{remark}

\begin{remark}\label{rem:littleinvariance}
For $ 0 \leq l \leq k$, we let $\alpha_1|_{[l, l+1]}:[0,1] \rightarrow M$ denote the curve  $\alpha_1|_{[l, l+1]}(t):= \alpha_1(l+t), \; \forall t \in [0,1]$.  Note that  $\psi_1 \circ \alpha_1 |_{[l-1,l]} = \alpha_1|_{[l, l+1]}$ for $ l \notin \{K_i : 0 \leq i \leq m-1 \}.$  For the remaining values of $l$ the two curves $\psi_1 \circ \alpha_1 |_{[l-1,l]}$ and $\alpha_1|_{[l, l+1]}$ only coincide near their endpoints.  Indeed, it can easily be checked, using Equations \eqref{eq:image_in_W} and \eqref{eq:def_tau}, that  both of the above curves coincide with the curve $t \mapsto \phi^t_H(x_{i+1})$ on $ [0, \kappa] \cup [1-\kappa,1]$. 
\end{remark}

\begin{remark}\label{rem:both_curves_inUi}
Let $l=K_i, 0\leq i \leq m-1$. The two curves $\psi_1 \circ \alpha_1 |_{[l-1,l]}$ and $ \alpha_1|_{[l, l+1]}$ are both contained in $U_i$.  This follows from the construction of $\tau$ and the fact that the curves $t \mapsto \phi^t_H(x_i)$ and $t\mapsto \phi^t_H(y_i) $, $t\in [0,1]$, are both contained in $U_i$.  We leave the details of this to the reader.
\end{remark}

We will next check that for each $ l \in \{K_i : 0 \leq i \leq m-1 \}$ the $C^0$ distance between the two curves $\psi_1 \circ \alpha_1 |_{[l-1,l]}$ and $ \alpha_1|_{[l, l+1]}$ is small.  More precisely, we will prove: 

\begin{claim}\label{cl:curves_are_close_by}
$d( \psi_1 \circ \alpha_1(l-1+t) , \alpha_1(l+t)) < \eps, $  for each $t \in [0,1],$ and for each $l \in \{K_i : 0 \leq i \leq m-1 \}$.
\end{claim}

\begin{proof}[Proof of Claim]  
  Using the definition of $\alpha_1$, it can easily be checked that $\alpha_1(K_i+t) = \phi^t_H(x_{i+1})$ and $\alpha_1(K_i-1 +t)  =  \tau \phi^{k_i-1+t}_H(x_{i}) = \tau \phi^{-1+t}_H(y_{i})$.  It follows  that  $\psi_1 \circ \alpha_1(K_i-1+t)  = \tau \phi^1_H \tau \phi^{-1 + t}_H (y_{i})$.

Now, recall that the curves $t \mapsto \phi^t_H(y_{i})$ and  $t \mapsto \phi^t_H(x_{i})$ intersect the support of $\tau$ only for $t \in (-2 \kappa, 2 \kappa)$.   Using this fact it can be checked  that 

$$
\tau \phi^1_H \tau \phi^{-1 + t}_H (y_{i}) = 
\begin{cases} 
\tau \phi^t_H(y_{i}), & t\in [0, 2 \kappa],\\
\phi^t_H(y_{i}), & t\in [2 \kappa, 1-2\kappa],\\
\tau \phi^1_H ( \tau \phi^{-1 + t}_H (y_{i})), & t\in [1-2 \kappa, 1].

\end{cases} $$

If $ t\in [0, 2 \kappa]$ then $\tau \phi^t_H(y_{i}), \phi^t_H(x_{i+1}) \in W(a_{i})$  and the set $W(a_{i})$ has diameter less than $\delta$, thus $d(\tau \phi^t_H(y_{i}), \phi^t_H(x_{i+1})) < \delta < \eps$.

If $t \in [2 \kappa, 1-2\kappa]$, then we must prove that $d(\phi^t_H(y_{i}), \phi^t_H(x_{i+1})) < \eps$.  This follows immediately from Equation \eqref{eq:delta} and the fact that 
 $d(y_{i}, x_{i+1}) < \delta$.

Finally, we consider the case  $t\in [1-2 \kappa, 1]$:  Write $\phi^t_H(x_i) = \phi^1_H (\phi^{-1+t}_H(x_i))$.  Note that in this case $\tau \phi^{-1 + t}_H (y_{i})$ and $\phi^{-1 +t}_H(x_{i+1})$ are both contained in the set $W(a_{i})$ which has diameter less than $\delta$. We see, via Equation \eqref{eq:delta}, that  $$d(\tau \phi^1_H(\tau \phi^{-1 + t}_H (y_{i})), \phi^t_H(x_{i+1})) \leq $$ $$ d_{C^0}(\tau, Id) + d(\phi^1_H(\tau \phi^{-1 + t}_H (y_{i})), \phi^1_H(\phi^{-1+t}_H(x_{i+1}))) < \delta + \frac{\eps}{2} < \eps,$$ which proves our claim.
\end{proof}

\vspace{.2cm}

\noindent \textbf{Step 4. Constructing the curve $\alpha$.}  Let $\m V$ denote a contractible open subset of $M$. Since  $\m V$ is contractible there exists a 1-form $\lambda$ such that $\omega = d\lambda$ inside $\m V$. Using the 1-form $\lambda$ we can define the action of any  curve $a: [0,1] \rightarrow \m V$ to be the integral $\int_0^1 a^* \lambda.$  Of course, the action of a curve depends on $\lambda$, however the difference of action between two curves with the same endpoints does not depend on the choice of $\lambda$ or $\m V$.
 
 For any $0 \leq l \leq k$, the two curves  $\alpha_1|_{[l, l+1]}$ and $\psi_1 \circ \alpha_1|_{[l-1,l]}$ coincide near their endpoints.  Furthermore, one can find a contractible neighborhood of $\alpha_1|_{[l, l+1]}$ which contains $\psi_1 \circ \alpha_1|_{[l-1,l]}$:  Indeed, as mentioned in Remark \ref{rem:littleinvariance}, if $ l \notin \{K_i : 0 \leq i \leq m-1 \}$, then  $\psi_1 \circ \alpha_1 |_{[l-1,l]} = \alpha_1|_{[l, l+1]}$.  Hence, we can take  any sufficiently small neighborhood of $\alpha_1|_{[l,l+1]}$.  If $ l = K_i, \; 0 \leq i \leq m-1$, then by Remark \ref{rem:both_curves_inUi} both curves are contained in $U_i$ which is contractible. 
 
 If $ l=1 $ the two curves $\alpha_1|_{[l, l+1]}$ and $\psi_1 \circ \alpha_1|_{[l-1,l]}$ coincide.  Beginning with $l=2$, for each $ 2 \leq l \leq k $ we successively make  $C^0$--small perturbations of $\alpha_1|_{[l,l+1]}$ near $t= l+ \tfrac12$ to obtain an embedded curve $\alpha$ with the property that the two curves  $\alpha|_{[l,l+1]}$ and $\psi_1 \circ \alpha|_{[l-1,l]}$ have the same action and are still contained in the same contractible open set mentioned in the previous paragraph. We should add that it is a well-known fact that one can make an arbitrary adjustment to the action of a curve by performing a $C^0$--small perturbation; see for example Remark A.13 of \cite{BuOp}. See Figure \ref{fig:Gamma2}.

 \begin{figure}
\centering
\def\svgwidth{\textwidth}
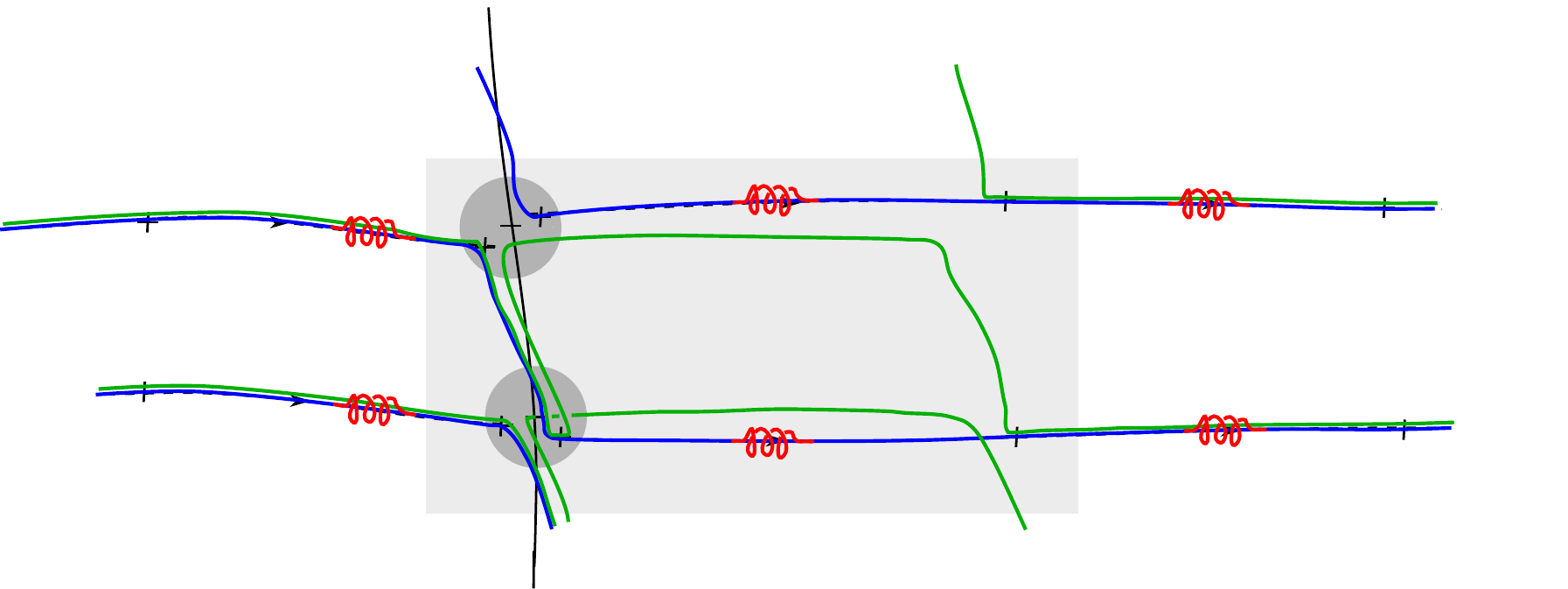
\caption{A portion of the curve $\alpha_1$ in blue, its image by $\psi_1$ in green, and the curve $\alpha$ obtained from $\alpha_1$ by local perturbations which are represented in red.
}
\label{fig:Gamma2}
\end{figure}

  We will ensure that the perturbations made in the previous paragraph  satisfy the following properties.  In the case where $ l \in \{K_i : 0 \leq i \leq m-1 \}$, we require the $C^0$ distance between $\alpha|_{[l,l+1]}$ and $\psi_1 \circ \alpha|_{[l-1,l]}$ to be less than $\eps.$  The fact that this can be achieved follows from Claim \ref{cl:curves_are_close_by} and by taking the perturbations from the previous paragraph to be sufficiently small.  Next consider the case where $ l \notin \{K_i : 0 \leq i \leq m-1 \}$.  In the previous paragraph, we obtained $\alpha$ from $\alpha_1$ by making a $C^0$ small perturbation of $\alpha_1$ in the interior of a small closed interval which contains the point $t = l + \frac{1}{2}$; let $I_l \subset [l, l+1]$ denote this interval. Since the intervals $I_l$ can be taken to be arbitrarily small and the action-adjusting perturbations from the previous step can be taken to have arbitrarily small support, we can pick contractible neighborhoods $V_l$ of  $\alpha|_{I_l}$ such that these neighborhoods are  pairwise disjoint and  the image of $\psi_1 \circ \alpha|_{I_{l-1}}$ is contained in $V_l$ for each $ l$.  Furthermore, we require that $\phi^1_H(V_l) \cap V_l = \emptyset$ and that $ \alpha(t) \in V_l $ only if $ t \in (l,l+1) $. For $ l = K_i $ we of course can assume that $ \psi_1 \circ \alpha|_{[l-1,l]}$ and $ \alpha|_{[l,l+1]}$ are contained in $ U_i $. We remark that by taking $V_k$ and $I_k$ to be sufficiently small we can ensure that the formula for $\alpha$ from  the third property of the statement of the theorem continues to hold. 
  
  Lastly, we point out that since 
  $ \alpha_1([2, k+1])$ is contained in $U \cup \m C(\gamma((0,1)))$ (see Remark \ref{rem:alpha1_location}), by picking the sets $V_l$ to be small enough, we may assume that each $V_l$ is contained in  $U \cup \m C(\gamma((0,1)))$. 
 
\vspace{.2 cm}

\noindent  \textbf{Step 5.  Turning $\alpha$ into an invariant curve.}  In this final step of the proof, we will perturb $\psi_1$ to a Hamiltonian diffeomorphism $\psi$ such that $\psi \circ \alpha|_{[l-1,l]} = \alpha|_{[l,l+1]}$.  
 
  In Step 4 we arranged that the two curves $\psi_1 \circ \alpha|_{I_{l-1}}$ and $\alpha|_{I_l}$ have the same action.  First assume that $ l \notin \{ K_i  :  0 \leqslant i \leqslant m-1 \}$. Then, we can find a  Hamiltonian diffeomorphism $\eta_l$, generated by a Hamiltonian which is compactly supported in $V_l$, such that $\eta_l \circ \psi_1 \circ \alpha|_{I_{l-1}} =  \alpha|_{I_l}$. Now, let $\eta$ be the composition of  the $\eta_l$'s  for $ l \notin \{K_i : 0 \leq i \leq m-1 \}$ and write $\psi_2 = \eta \circ\psi_1$ and observe that 
\begin{equation}\label{eq:invariant_almost} \psi_2 \circ \alpha|_{[l-1,l]} = \alpha|_{[l, l+1]}.
 \end{equation}
  
    Since the supports of $\eta_l$'s are disjoint and can be taken to be as small as one wishes, we obtain the following inequalities  $$  
    d_{C^0}(\psi_2, \phi^1_H) =  d_{C^0}(\eta\circ \psi_1, \phi^1_H) \leq  d_{C^0}(\eta, Id) + d_{C^0}( \psi_1, \phi^1_H) < \delta < \eps.$$  
    
    We remark that we can assume that $\eta$ is the time-1 map of a Hamiltonian, say $G_2$, whose norm $\|G_2\|_{\infty}$ can be made as small as one wishes.  Existence of such Hamiltonian $G_2$ follows from Lemma \ref{lemma:small_Ham_map_curves}.

  We will now deal with the case $ l \in \{K_i : 0 \leq i \leq m-1 \}$.  Fix $i$ and to simplify our notation denote $\gamma_0(t) := \psi_2 \circ \alpha|_{[K_i-1,K_i]}(t) = \psi_2 \circ \alpha(K_i-1+t)$ and $\gamma_1(t) := \alpha|_{[K_i,K_i+1]}(t) = \alpha(K_i+t).$
  The curves  $\gamma_0, \gamma_1$ coincide for $t\in [0, \kappa] \cup [1- \kappa,1]$: by Remark \ref{rem:littleinvariance} this was true for $\alpha_1$, and $\alpha$ differs from $\alpha_1$ only for $t$ near $K_i+\frac{1}{2}$.  Recall from Step 4 that the $C^0$ distance between  $\gamma_0$ and $\gamma_1$ is less than $\eps$ and that both of these curves are contained in $U_i$.
  \begin{claim}\label{cl:existence_small_homotopy}There exists a homotopy $F_i(s,t): [0,1] \times [\kappa,1-\kappa] \rightarrow U_i$ rel. end points from $\gamma_0|_{[\kappa, 1- \kappa]}$ to $\gamma_1|_{[\kappa, 1- \kappa]}$ such that under this homotopy the trajectory of any point of $\gamma_0|_{[\kappa, 1- \kappa]}$ has diameter less than $C \eps$  for some constant $C>0$ that only depends on $U$, the diffeomorphism $\Phi$ introduced in Step 1 and the Riemannian metric. 
 \end{claim}
 \begin{proof}
 The existence of the homotopy $F_i$ follows from the fact that the diffeomorphism $\Phi$ picked at the beginning of the proof identifies $U_i$ with the  box $(\frac{i}{m} - a, \frac{i+1}{m} +a )  \times (-c, 1+c) \times (-c,c) \times \ldots \times (-c,c)$.  We can take $F_i$ to be the straight-line homotopy in these coordinates.  The statement about diameter of trajectories of points on $\gamma_0|_{[\kappa, 1- \kappa]}$ follows from the fact that the $C^0$ distance between $\gamma_0$ and $\gamma_1$ is less than $\eps$.  The constant $C$ is given by the ratio of the pull back, via $\Phi$, of the  Riemannian metric $d$ to $(-c, 1+c )  \times (-c, 1+c) \times (-c,c) \times \ldots \times (-c,c)$ and the usual Euclidean metric.
 \end{proof}

  The dimension of $M$ is at least 4 and thus we can perturb the homotopy $F_i(s,t)$, provided to us by the above claim, to ensure that its image intersects the image of  $\alpha$ only for $t \in (K_i, K_i+1)$ and the image of $ \psi_2 \circ \alpha$ only for $ t \in (K_i -1,K_i)$.  Let $W_i \subset U_i$ be a small neighborhood of the image of the homotopy $F_i$ which intersects the images of $\alpha$ and $\psi_2 \circ \alpha$ only for $t \in (K_i, K_i+1)$ and $t \in (K_i-1, K_i)$, respectively.  
 Lastly, recall that we picked the sets $U_i$ in Step 1 such that $U_i$ can only intersect $U_{i-1}$ and $U_{i+1}$.  Hence, $W_i$ can only intersect $W_{i-1}$ and $W_{i+1}$.
  
  Now, we apply the $h$-principle of Proposition \ref{prop:h_principle} to obtain a Hamiltonian, say $G_i'$, generating a Hamiltonian isotopy $\varphi_i^t, t\in[0,1]$,  such that
  
  \begin{enumerate}
  \item $G_i'$ is supported in $W_i$  and $\|G_i'\|_{\infty}$ is as small as one wishes, 
  \item $\varphi_i^1 ( \psi_2 \circ \alpha|_{[K_i-1,K_i]}) =  \alpha|_{[K_i,K_i+1]}$, 
  \item $d_{C^0}(Id,  \varphi_i^1) < 2C \eps$.
  \end{enumerate}

Let $\phi_{\mathrm{odd}}$ denote the composition of the diffeomorphisms  $\varphi_i^1$ for odd $i$. Note that $\phi_{\mathrm{odd}}$ is the time-1 map of the Hamiltonian $G_{\mathrm{odd}}$ which is the sum of all $G_i'$ for odd $i$. Define $\phi_{\mathrm{even}}$ and $G_{\mathrm{even}}$ similarly.  Let $\psi := \phi_{\mathrm{even}} \phi_{\mathrm{odd}} \psi_2$.  

We remark that  $W_{m-1}$ is disjoint from  $\phi^1_H \circ \alpha([k, k+1])$. As pointed out in Remark \ref{rem:third_property}, this ensures that the third property from the statement of the theorem holds. 

We will now check that it is indeed true that $\psi \circ \alpha(t) = \alpha(t+1)$ for $t\in[0,k]$. If $l \notin \{K_i : 0 \leq i \leq m-1 \}$, then $\psi \circ  \alpha|_{[l-1,l]} = \psi_2 \circ \alpha|_{[l-1,l]}$: this is because we picked the sets $W_i$, whose union contains the supports of $\phi_{\mathrm{even}}$ and $\phi_{\mathrm{odd}}$, such that that they do not intersect the image of  $\psi_2 \circ \alpha|_{[l-1,l]}$.   We have already checked, in Equation \eqref{eq:invariant_almost}, that $\psi_2 \circ \alpha(t) = \alpha(t+1)$ for $t \in [l-1,l]$.  We must next check that $\psi \circ  \alpha|_{[l-1,l]} = \alpha|_{[l,l+1]}$ for $l \in \{K_i : 0 \leq i \leq m-1 \}$.  Fix $i$ and  let $l = K_i$. Then, $\phi_{\mathrm{even}} \phi_{\mathrm{odd}} \psi_2 \circ \alpha|_{[K_i-1,K_i]} = \varphi_i^1 \psi_2 \circ \alpha|_{[K_i-1,K_i]}$ because we picked the sets $W_i$ such that the image of $\psi_2 \circ \alpha|_{[K_i-1,K_i]}$ only intersects $W_i$. Now, $\varphi_i^1$ was picked such that   $\varphi_i^1 (\psi_2 \circ \alpha|_{[K_i-1,K_i]}) = \alpha|_{[K_i,K_i+1]}.$

\medskip

The rest of the proof is dedicated to verifying the fourth and the fifth properties from the statement of the theorem. We will first check that $d_{C^0}(\psi, \phi^1_H) < (4C+1) \eps$.  First, note that  $d_{C^0}( \phi_{\mathrm{odd}}, Id) < 2C \eps$ because $\phi_{\mathrm{odd}}$ is the composition of the diffeomorphisms  $\varphi^1_i$, for odd $i$, which have disjoint supports and each of which satisfies $d_{C^0}(\varphi_i^1, Id) < 2C \eps$.  Similarly, $d_{C^0}( \phi_{\mathrm{even}}, Id) < 2C \eps$.  It follows that
$d_{C^0}(\psi, \phi^1_H) =  \linebreak 
d_{C^0}( \phi_{\mathrm{even}} \phi_{\mathrm{odd}} \psi_2, \phi^1_H) \leq  d_{C^0}( \phi_{\mathrm{even}} \phi_{\mathrm{odd}}, Id) + d_{C^0}(\psi_2, \phi^1_H) < 4C \eps + \eps$; recall that we proved earlier that $d_{C^0}(\psi_2, \phi^1_H) < \eps$.  By going back to Step 1 and replacing $\eps$ with $\frac{\eps}{4C+1}$ we obtain $d_{C^0}(\psi, \phi^1_H) <  \eps$.


It is fairly easy to see that we obtained $\psi$ from $\phi^1_H$ by composing it, on the left, with the Hamiltonian diffeomorphism 
$\theta:= \phi_{even} \circ \phi_{odd} \circ \eta \circ \tau$; we denote by $F$ the generating Hamiltonian of $\theta$.  Now, the supports of the diffeomorphisms $\tau, \eta, \phi_{odd}, \phi_{even}$ are all contained in the union of $U$ and the open sets $V_l,  l \notin \{K_i : 0 \leq i \leq m-1 \}$. Recall that, as was mentioned at the end of Step 4, each of the sets $V_l$ is contained in $U \cup \m C(\gamma((0,1)))$.  This proves the claim about the support of $F$. 

As was remarked throughout the proof, the generating Hamiltonians of  $\tau, \eta, \phi_{odd}, \phi_{even}$, which  we denoted by $G_1, G_2, G_{\mathrm{odd}}, G_{\mathrm{even}}$, respectively, were picked to have norms as small as one wishes.  This proves the claim about $\|F\|_{\infty}$; we have established the fourth property in the statement of the theorem.  

To finish the proof of the theorem, it remains to show that $\psi$ has the same set of fixed points as $\phi^1_H$.  Now, $\psi = \theta \circ \phi^1_H$ and $\theta$ is supported in the union of $U$ and the open sets $V_l$.  Since $U$ and the $V_l$'s do not contain any of the fixed points of $\phi^1_H$ we conclude that the fixed points of $\phi^1_H$ are all fixed points of $\psi$ as well.  Now, to show that $\psi$ has no additional fixed points we will check that any point $x$ in the support of $\theta$ cannot be a fixed point of $\psi$.  First,  suppose that $x \in U$.  Since $U$ is a small neighborhood of the compact set $\Gamma([0,1] \times [0,1])$, which contains no fixed points of $\phi^1_H$, there exists a positive number, say $\varrho$, such that 
$d(x, \phi^1_H(x)) > \varrho$ for all $x \in U$.  Since $d_{C^0}(Id, \theta)  < \eps$, picking $\eps$ to be smaller than $\varrho$ guarantees that $\theta \phi^1_H(x) \neq x$. Next, suppose that $x \notin U$.  Hence, it must be the case that  $x$ is contained in one of the $V_l$'s.  We leave it to the reader to check, using the condition $\phi^1_H(V_l) \cap V_l = \emptyset$, that $x$ can not be a fixed point of $\psi$.  This completes the proof of Theorem \ref{theo:connecting-non-critical}.  
\end{proof}

\subsubsection{Proof of Theorem \ref{theo:invariant_curve}} \label{section:proof-connecting-2-critical}

In this section, we prove Theorem \ref{theo:invariant_curve} with the help of Theorems \ref{theo:invariant_curve_max_to_nearby} and \ref{theo:connecting-non-critical}.

Let $H$ be a Morse function as described in Lemma \ref{lemma:initial-morse-function}, and let $\eps>0$. Let $p, q$ be critical points of $H$, at least one of the two being non-extremal and let $\gamma:[0,1]\to M$ be a curve as in the statement of Theorem \ref{theo:invariant_curve}. We will give the proof under the assumption that $q$ is not a minimum. The proof in the case where  $q$ is a minimum (hence $p$ is not a maximum), can be easily adapted from this case. 

Let $\eps,\rho>0$, let $V_p$ be either the empty set if $p$ is a maximum or an open set containing $\{\phi_H^t(\gamma(0))\,:\,-\infty<t\leq 1\}$ is $p$ is not a maximum, and let $V_q$ be an open set containing $\{\phi_H^t(\gamma(1))\,:\,-1\leq t<+\infty\}$.

 Our construction will be carried out in three steps: first in the neighborhood of $p$, then from the neighborhood of $p$ to that of $q$, and finally in the neighborhood of $q$.

\medskip
\noindent\textbf{Step 1. Construction in the neighborhood of $p$.} We need to consider two cases. 

The simpler case is where $p$ is not a maximum. In this case, we set $z=\gamma(0)$ and $\alpha_1(t)=\phi_H^{t-2}(z)$ for all $t\in (-\infty, 1]$. At this point there is no need for a perturbation of $\phi_H^1$, thus we set $\theta_1=\id$. It is clear that, $\alpha_1$ and $\theta_1$ satisfy the requirements of Theorem \ref{theo:invariant_curve} for $\alpha$ and $\theta$.

Now consider the case when $p$ is a maximum. In this case, $\gamma(0)=p$ and it is our assumption that in some Darboux coordinates around $p$, $H$ is of the form $c\sum_{i=1}^n(x_i^2+y_i^2)$ where $c$ is a small negative constant. We denote by $\|\cdot\|$ the standard euclidean norm in these coordinates, and by $B(0,r)$ the euclidean ball centered at 0 and of radius $r$. Let $z$ be a point on the image of $\gamma$, distinct from $p$, but close enough to $p$ so that we may apply Theorem \ref{theo:invariant_curve_max_to_nearby} to the Hamiltonian $H'=-H$ and to the point $x=\phi_H^{-2}(z)=\phi_{H'}^2(z)$. Given a ball $B$ centered at $p$ and containing $x$, and given $\eps>0$, Theorem \ref{theo:invariant_curve_max_to_nearby} provides us with a curve $\alpha':[ - 1,+\infty)\to B \setminus \{p\}$ and a Hamiltonian homeomorphism $\theta'$ satisfying : 
\begin{itemize}
\item $\alpha'(t) =\phi^t_{H'}(x)$ for $t \in [- 1,0]$ and $\alpha'(t) \to p$ as $t\to+\infty$,
\item $\theta' \circ \phi^1_{H'} (\alpha'(t)) = \alpha'(t+1)$ for all $t \in [-1, \infty)$,
\item $\theta'(p)=p$ and $\theta' \circ \phi^1_{H'}$ has the same fixed points as $\phi_{H'}^1$,
\item The support of $\theta'$ is contained in the union of the open ball $B(0,\|x\|)$ and the $\eps$-neighborhood of $\{\phi_{H'}^t(x)\,:\,t\in[1,2]\}$,
\item $d_{C^0}(Id, \theta') < \eps'$ and $\|F'\|_{\infty} \leq \rho$, where $F'$ denotes a continuous Hamiltonian  such that $\phi^1_{F'} = \theta'$,
\item  For any neighborhood of $p$, there exists a compactly supported Hamiltonian diffeomorphism of $B$ which coincides with $\theta'$ in the complement of that neighborhood.
\end{itemize}
 

Now define $\alpha_1(t)=\theta'^{-1}(\alpha'( -t))$ for all $t\in(-\infty, 1]$ and $\theta_1=\theta'^{-1}$. We deduce the following from the above properties:
\begin{enumerate}
\item $\alpha_1(t) =\phi^{t-2}_{H}(z)$ for $t \in [0,1]$ and $\alpha_1(t) \to p$ as $t\to -\infty$, 
\item $\theta_1 \circ \phi^1_{H} (\alpha_1(t)) = \alpha_1(t+1)$ for all $t \in (-\infty, 0]$,
\item $\theta_1 \circ \phi^1_{H}$ has the same fixed points as $\phi_H^1$,
\item $\alpha_1$ takes values in the closed punctured ball centered at $p$ and having $z$ on its boundary, which is contained in $\m C(\gamma((0,1)))$.
\item The support of $\theta_1$ is contained in the union of the open ball $B(0,\|x\|)$ and the $\eps$-neighborhood of $\{\phi_{H}^t(x) \,:\,t\in[-2,-1]\}$, 
\item $d_{C^0}(Id, \theta_1) < \eps$ and $\|F_1\|_{\infty} <\rho$,  where $F_1$ denotes a continuous Hamiltonian  such that $\phi^1_{F_1} = \theta_1$,
\item  For any neighborhood of $p$, there exists a compactly supported Hamiltonian diffeomorphism of $B$ which coincides with $\theta_1$ in the complement of that neighborhood.
\end{enumerate}
We see that $\alpha_1$ and $\theta_1$ are compatible with the requirements of Theorem \ref{theo:invariant_curve}.

\medskip
\noindent\textbf{Step 2. From a neighborhood of $p$ to a neighborhood of $q$.} In both cases above we have set $z=\gamma(a)$ with $0\leq a< 1$ ($a>0$ if $p$ is a maximum, $a=0$ otherwise), and let us now set $w=\gamma(1)$ and $\tilde\gamma(t)=\gamma(a+(1-a)t)$ for $t\in[0,1]$. The curve $\tilde\gamma$ satisfies $\tilde\gamma(0)=z$ and $\tilde\gamma(1)=w$. It also satisfies that $ \frac{d}{dt} H \circ \tilde\gamma(t) < 0 $, which implies in particular that if $H$ is $C^2$-small enough then the map $\Gamma:(s,t)\mapsto \phi_H^t(\tilde\gamma(s))$ is a smooth embedding of the set $(\{0\}\times[-2,1])\cup((0,1)\times[0,1]) \cup(\{1\}\times[-1,2])$. In the case where $p$ is a maximum, note that since the flow of $H$ preserves level sets, the compact set $\Gamma([0,1]\times[0,1])$ does not intersect the open ball $B(0,\|x\|)=B(0,\|z\|)$. Moreover, if $\eps$ and $c$ are small enough, $\Gamma([0,1]\times[0,1])$ does not intersect the $\eps$-neighborhood of $\{\phi_{H}^t(x)\,:\,t\in[-2,-1]\}$ either. Hence, $\Gamma([0,1]\times[0,1])$ does not intersect the support of $\theta_1$.
  
We are now in the situation to apply Theorem \ref{theo:connecting-non-critical}: Let $U \supset \Gamma([0,1] \times [0,1])$ be an open set chosen so small that: 
\begin{itemize}
\item If $p$ is not a maximum, 
  \begin{equation}\label{eq:a}
    U\cup\,\m C(\tilde{\gamma}((0,1)))=U\cup\,\m C(\gamma((0,1)))\subset V_p\cup \m C(\gamma((0,1)))\cup V_q,
  \end{equation}
\item 
If $p$ is a maximum, $U$ does not intersect the support of $\theta_1$ and 
\begin{equation}\label{eq:b}
  U\cup\,\m C(\tilde{\gamma}((0,1)))= U\cup\,\m C(\gamma((a,1)))\subset \m C(\gamma((0,1)))\cup V_q.  
\end{equation}
\item $U$ does not intersect the piece of orbit $\{ \phi_H^t(w) : t \in [1+\frac{1}{3},+\infty)\}$.
\end{itemize}



Then, for every $\eps,\rho>0$, we can find a curve $\tilde\alpha_2:[0, k+1]\to M$, with $\alpha_2(0)= x = \phi_H^{-2}(z)$ and $\alpha_2(k)=w$, and a Hamiltonian diffeomorphism $\psi$ which is $C^0$-close to $\phi_H^1$ and satisfies a certain list of properties. We set $\theta_2=\psi\circ\phi_H^{-1}$. The properties of $\alpha_2$ and $\psi=\theta_2\circ\phi_H^1$ listed in Theorem \ref{theo:connecting-non-critical} have the following consequences:
\begin{enumerate}
\item $\alpha_2(t)=\alpha_1(t)=\phi_H^{t-2}(z)$ for all $t\in[0,1]$, 
\item $\theta_2\circ\phi_H^1(\alpha_2(t))=\alpha_2(t+1)$ for all $t\in[0,k]$,
\item $\theta_2\circ\phi_H^1$ has the same set of fixed points as $\phi_H^1$,
\item There exists $t_0\in(1,2)$ such that $\alpha_2(t)=\phi_H^t(x)$ for all $t\in[0,t_0]$ and $\alpha_2([t_0,k+1])\subset U\cup\,\m C(\tilde{\gamma}((0,1)))$. If $p$ is not a maximum then $\{\phi_H^t(x)\,:\,t\in[0,t_0]\} \subset V_p$ by definition of $V_p$; if $p$ is a maximum, then $\{\phi_H^t(x)\,:\,t\in[0,t_0]\}$ is contained in the boundary of the ball $B(0,\|x\|)$, hence in $ \m C(\gamma((0,1)))$. Using the inclusions \eqref{eq:a} and \eqref{eq:b}, we thus get that
 $\alpha_2$ takes values in $V_p\cup \m C(\gamma((0,1)))\cup V_q$.
\item $\theta_2$ is generated by a Hamiltonian $F_2$ which is supported in the open set $U\cup\,\m C(\tilde{\gamma}((0,1)))$, which does not intersect the support of $\theta_1$,
\item $d_{C^0}(\theta_2,\id)<\eps$ and $\|F_2\|_{\infty}<\rho$, 
\item $\theta_2$ is a smooth Hamiltonian diffeomorphism.
\end{enumerate}

All the points above follow immediately from Theorem \ref{theo:connecting-non-critical}.
Note that it also follows from Theorem \ref{theo:connecting-non-critical} that for any neighborhood $V$ of $\phi_H^{\frac12}(w)$, the curve $\alpha_2$ can be chosen so that for some $b>0$,  and all $t\in[k,k+1]$,
\begin{equation}\label{eq:1}
\alpha_2(t)\begin{cases}=\phi_H^{t -k}(w), & t\in[0,k+\tfrac12-b]\cup[k+\tfrac12+b,1]\\
\in V, & t\in[k+\tfrac12-b,k+\tfrac12+b].\end{cases}
\end{equation}

\medskip
\noindent{\textbf{Step 3. Construction in the neighborhood of $q$.}
By assumption, $\phi_H^t(w)=\phi_H^t(\gamma(1))\to q$ when $t\to+\infty$. Let $\tilde\alpha_3:[k,+\infty)\to M$ be the smooth curve defined by 
$$\tilde\alpha_3(t)=\begin{cases}\alpha_2(t), & t\in[k,k+1)\\
\phi_H^{t-k}(w), & t\in[k+1,+\infty).
\end{cases}$$ 
It has the following properties:
\begin{itemize}
\item $\tilde\alpha_3(t)\to q$ when $t\to+\infty$,
\item For some small $\kappa>0$, the image of $\tilde\alpha_3|_{[k+1+\kappa,+\infty)}$ does not intersect the supports of $\theta_1$ nor $\theta_2$,
\item For all $t\in [k,+\infty)\setminus(k+\tfrac12\!-\!b,k+\tfrac12\!+\!b)$,  $$\theta_2\circ\theta_1\circ\phi_H^1(\tilde\alpha_3(t))=\phi_H^1(\tilde\alpha_3(t))=\tilde\alpha_3(t+1),$$
\item According to \eqref{eq:1}, for $t\in[k+\tfrac12\!-\!b,k+\tfrac12\!+\!b]$, $\theta_2\circ\theta_1\circ\phi_H^1(\tilde\alpha_3(t))$ belongs to $\phi_H^1(V)$ which is an arbitrarily small neighborhood of $\tilde\alpha_3(k+1+\tfrac12)$.
\end{itemize}

For every integer $l\in\{k+1, k+2, \ldots \}$, we let $B_l$  be a small ball centered at $\tilde\alpha_3(l+\tfrac12)$ and included in the open set $V_q$. We may assume $V$ is small enough so that $V\subset U$ and $\phi_H^1(V)\subset B_{k+1}$. We also assume that the balls $B_l$ are all disjoint, and that they do not intersect the supports of $\theta_1$ nor $\theta_2$. Similarly as in Step 4 of the proof of Theorem \ref{theo:connecting-non-critical}, we successively make $C^0$-small perturbations of $\tilde\alpha_3|_{[l,l+1]}$ in $B_l$, for $l\in\{k+1, k+2, \ldots \}$, to obtain an embedded curve $\alpha_3$, such that the curves $\phi_H^1(\alpha_3|_{[l-1,l]})$ and $\alpha_3|_{[l,l+1]}$ have the same action for all $l$. We can choose those perturbations so that, for each $l$, these two curves
 coincide in the complement of a very small interval $J_l$ centered at $l+\frac12$, and they both send $J_l$ into $B_l$. 
Using Lemma  \ref{lemma:small_Ham_map_curves} (as in Step 5 in the proof of Theorem \ref{theo:connecting-non-critical}), this property implies that we can find Hamiltonian diffeomorphisms $\eta_l$, generated by Hamiltonians which are compactly supported in the $B_l$'s, such that $\eta_l(\phi_H^1(\alpha_3(t)))=\alpha_3(t+1)$ for all $t\in[l-1,l]$. Moreover, Lemma  \ref{lemma:small_Ham_map_curves} tells us that these generating Hamiltonians can be chosen with arbitrary small $\|\cdot\|_\infty$ norm.

Let $\theta_3$ be the composition of all the $\eta_l$'s for $l\in\{k+1, k+2, \ldots \}$. By construction, $\theta_3$ is generated by a Hamiltonian $F_3$ whose support does not intersect the support of $\theta_1$ nor that of $\theta_2$.

 Moreover, the $\eta_l$'s can be chosen $C^0$-close to $\id$ by shrinking the balls $B_{k+1}$, $B_{k+2}$, $\dots$. If we shrink each ball $B_l$ so that $\mathrm{diam}B_l<\inf_{z\in B_l}d(z,\phi_H^1(z))$, then $\theta_3\circ\phi_H^1$ has no fixed point in $B_l$. Indeed, the triangle inequality yields for all $z\in B_l$ : $$d(\theta_3\circ\phi_H^1(z),z)\geq d(\theta_3\circ\phi_H^1(z),\phi_H^1(z))-d(z,\phi_H^1(z))>0.$$
Therefore, the following list of properties is satisfied:
\begin{enumerate}
\item $\alpha_3(t)\to q$ when $t\to +\infty$, 
\item $\theta_3\circ \phi_H^1(\alpha_3(t)) 
=\alpha_3(t+1)$ for all $t\in[k,+\infty)$,
\item $\theta_3\circ \phi_H^1$ has the same set of fixed points as $\phi_H^1$,
\item For all $t\in [k,+\infty)$, $\alpha_3(t)\in V_q$,
\item $\mathrm{supp}(F_3)\subset V_q\cup\{q\}$,
\item $d_{C^0}(\id,\theta_3)<\eps$ and $\|F_3\|_{\infty}<\rho$.
\item For any neighborhood of $q$, $\theta_3$ coincides with a Hamiltonian diffeomorphism in the complement of that neighborhood.
\end{enumerate}

\begin{figure}[h!]
\centering
\def\svgwidth{\textwidth}
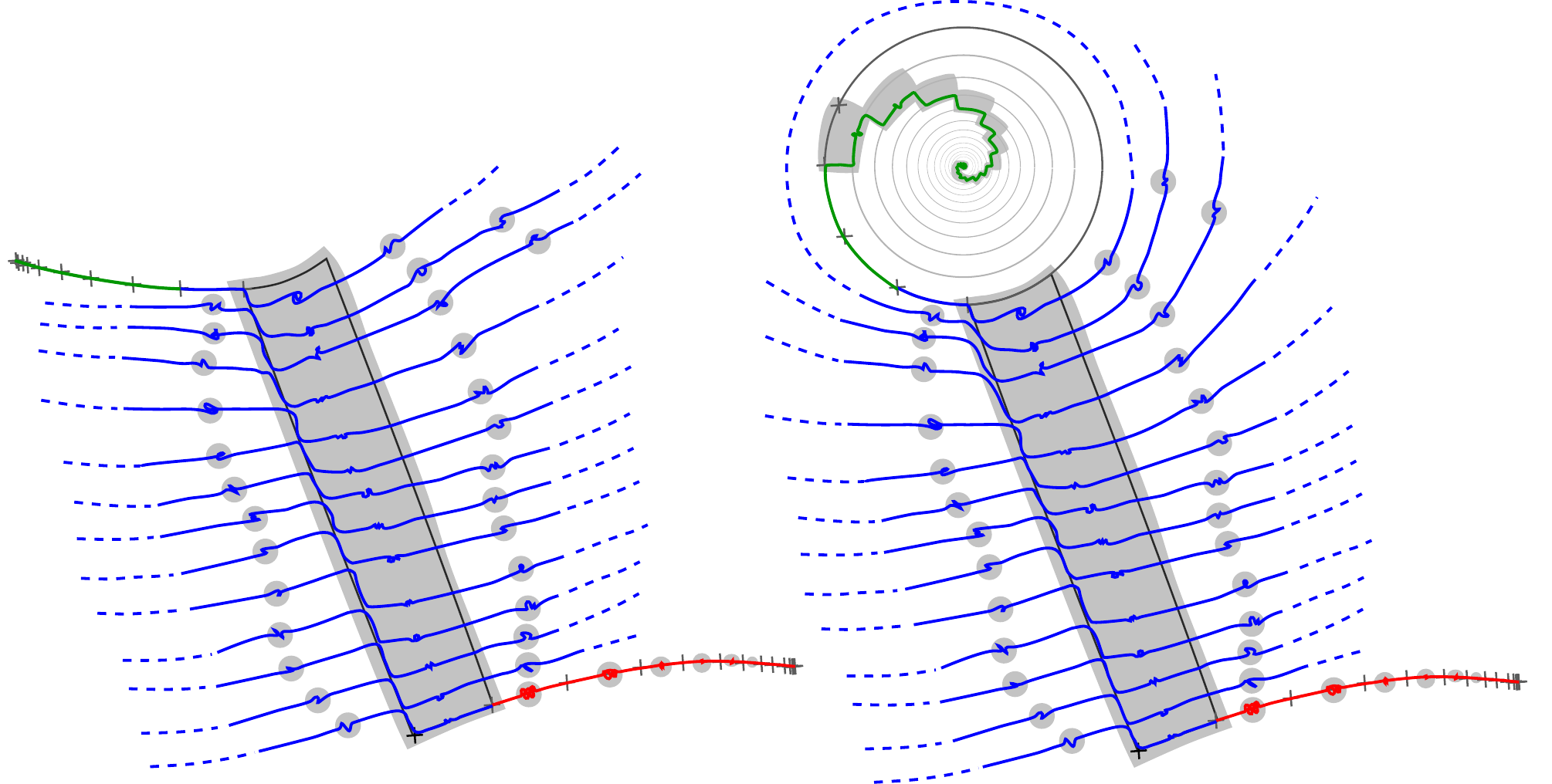
\caption{The case ``$p$ not a maximum'' is on the left, and the case ``$p$ a maximum'' is on the right. The curves $\alpha_1$, $\alpha_2$ and $\alpha_3$ are respectively in green, blue and red. The supports of the successive perturbations on $\phi_H^1$ are included in the grey regions.}
\label{fig:Alpha}
\end{figure} 

\medskip
\noindent{\textbf{End of the proof.}}
Let $\theta$ be the Hamiltonian homeomorphism $\theta=\theta_1\circ\theta_2\circ\theta_3$ and let $\alpha$ be the smooth curve which coincides with $\alpha_1$ on $(-\infty,1]$, with $\alpha_2$ on $[0,k+1]$, and with $\alpha_3$ on $[k,+\infty)$. The respective properties 1 to 7 established for each $\theta_1$, $\theta_2$ and $\theta_3$ in the previous steps, together with the fact that their supports are all disjoint, imply the corresponding properties  1 to 7 in Theorem \ref{theo:invariant_curve}} whose proof in now achieved. ~~~$\Box$

%
%

\bibliographystyle{abbrv}
\bibliography{biblio}


{\small

\medskip
\noindent Lev Buhovski\\
School of Mathematical Sciences, Tel Aviv University \\
{\it e-mail}: levbuh@post.tau.ac.il
\medskip

\medskip
\noindent Vincent Humili\`ere \\
Sorbonne Universit\'es UPMC Univ Paris 06. Institut de Math\'ematiques de Jussieu-Paris Rive Gauche.
UMR 7586, CNRS. Univ Paris Diderot, Sorbonne Paris Cit\'e. F-75005, Paris, France.
{\it e-mail:} vincent.humiliere@imj-prg.fr
\medskip

\medskip
 \noindent Sobhan Seyfaddini\\
 Institute for Advanced Study, School of Mathematics, Princeton, NJ.

\noindent CNRS, Institut de Mathématiques de Jussieu-Paris Rive Gauche, UMR7586, Sorbonne Universités, UPMC Univ Paris 06, Univ Paris Diderot, Sorbonne Paris Cité, F-75005, Paris, France. 
 {\it e-mail:} ssey@ias.edu

}

\end{document}